\newtheorem{theoremalpha}{Theorem}
\newtheorem{theorem}{Theorem}[section]
\newtheorem*{theorem*}{Theorem}
\newtheorem{proposition}[theorem]{Proposition}
\newtheorem{lemma}[theorem]{Lemma}
\newtheorem{corollary}[theorem]{Corollary}
\theoremstyle{definition}
\newtheorem{definition}{Definition}[section]
\DeclareMathOperator{\vol}{Vol}
\DeclareMathOperator{\mes}{Mes}
\DeclareMathOperator{\prob}{Prob}
\DeclareMathOperator{\Leb}{Leb}
\DeclareMathOperator{\grad}{grad}
\DeclareMathOperator{\supp}{supp}
\newcommand{\reals}{\mathbb{R}}
\newcommand{\TM}{T^1 M}
\newcommand{\TMu}{T^1 \tilde{M}}
\newcommand{\htil}{\tilde{h}}
\newcommand{\tH}{\tilde{\mathcal{H}}}
\newcommand{\borderM}{\partial \tilde{M}}
\newcommand{\dd}{\mathrm{d}}
\title{Unique ergodicity of horocyclic flows on nonpositively curved
	surfaces}
\author{Sergi Burniol Clotet\\
LPSM, Sorbonne Université, 4 Place Jussieu, 75005 Paris, France\\
(email: sergi.burniol\_clotet@upmc.fr)}
\date{October 27, 2022}
\begin{document}

\setlength{\parskip}{0.5ex plus 0.5ex minus 0.2ex}
\maketitle

\begin{abstract}
	On the unit tangent bundle of a nonflat compact nonpositively curved surface, we prove that there is a unique probability Borel measure invariant by a horocyclic flow which gives full measure to the set of rank $1$ vectors recurrent by the geodesic flow. If we assume in addition that the surface has no flat strips, we show that the horocyclic flow is uniquely ergodic. These results are valid for any parametrization of the horocyclic flow.
\end{abstract}

\section{Introduction}

Horocyclic flows defined on a compact hyperbolic surface are dynamical systems of high interest, for its relation with the geodesic flow and because they exhibit properties such as minimality \cite{Hedlund36} or unique ergodicity. In 1973, H. Furstenberg proved that the horocyclic flow on compact surfaces with constant negative curvature is uniquely ergodic using techniques from harmonic analysis \cite{Furstenberg73}. In 1975 it was generalized to compact surfaces with variable negative curvature by B. Marcus \cite{Marcus75}.
 We show in this article how to extend Marcus's result to the setting of nonpositve curvature. 
 
 An unstable horocycle is defined as the set of exterior normal unit vectors of a level set of a Busemann function (see Section 2.1 for a more precise definition). If the surface is orientable we can parametrize the horocycles into a flow. In this article, a horocyclic flow is any continuous flow on the unit tangent bundles whose orbits are unstable horocycles. 

\begin{theoremalpha}
	Let $M$ be an oriented nonflat compact boundaryless Riemannian surface with nonpositive curvature. Let $h_s$ be a horocyclic flow on the unit tangent bundle $T^1 M$ of $M$ and let $\Sigma_0$ denote the set of horocycles having a rank $1$ vector recurrent under the action of the geodesic flow. Then there is a unique Borel probability measure on $T^1 M$ invariant by the flow $h_s$ giving full measure to $\Sigma_0$.
\end{theoremalpha}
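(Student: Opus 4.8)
The plan is to reduce the statement to an equidistribution property of long pieces of horocyclic orbits and then to deduce uniqueness by integrating against an arbitrary invariant measure. Since $\Sigma_0$ is a union of horocycles it is $h_s$-invariant, so by ergodic decomposition it is enough to treat $h_s$-ergodic measures; in fact, once equidistribution is available, uniqueness will not even require ergodicity. For existence I would use Knieper's Bowen--Margulis measure $m_{BM}$ on $\TM$: it is $g_t$-invariant, it gives full mass to the set of rank $1$ vectors that are bi-recurrent under the geodesic flow (a subset of $\Sigma_0$), and its conditional measures along the strong unstable leaves — which on a surface are precisely the unstable horocycles — are the Patterson--Sullivan measures $\mu^u_v$. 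On each orbit $\mu^u_v$ is equivalent, by a positive continuous density, to the measure that is invariant under the given parametrized flow $h_s$ (for a unit-speed parametrization this is the arc-length); replacing the conditionals of $m_{BM}$ by the latter and correcting the transverse factor by that density produces a Borel probability measure $m$ on $\TM$ that is $h_s$-invariant and still concentrated on the rank $1$ bi-recurrent set, hence on $\Sigma_0$.

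The heart of the argument is the claim that for \emph{every} $v\in\Sigma_0$ and every $f\in C(\TM)$,
\[
\frac1T\int_0^T f(h_s v)\,\dd s \xrightarrow[T\to\infty]{} \int_{\TM} f\,\dd m .
\]
Any other point of the horocycle through $v$ is $h_{s_0}v$ with $s_0$ fixed, so the average changes by $O(|s_0|\,\lVert f\rVert_\infty/T)$ and one may assume $v$ is itself a rank $1$ vector recurrent under $g_t$. I would exploit the commutation relation $g_{-t}\circ h_s\circ g_t = h_{\sigma_t(v,s)}$, where $\sigma_t(v,\cdot)$ is an increasing reparametrization controlled by a Busemann cocycle: flowing the segment $\{h_s v : 0\le s\le T\}$ backward by a well-chosen $t=t(T)\to\infty$ turns it into a bounded horocyclic segment based at $g_{-t}v$, which by recurrence returns close to $v$, and then one flows it forward again by $g_t$. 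On the rank $1$ recurrent set the geodesic flow has a local product structure and, crucially, $m_{BM}$ is mixing (available for rank $1$ surfaces); the classical Margulis/Bowen--Marcus spreading argument then shows that the normalized Patterson--Sullivan restrictions, pushed by $g_t$, equidistribute towards $m_{BM}$, and passing from the Patterson--Sullivan normalization to the flow-parameter normalization through the continuous density yields the displayed limit — this last passage is also where the arbitrary parametrization is absorbed. Granting the claim, if $\mu$ is any $h_s$-invariant probability with $\mu(\Sigma_0)=1$, then by $h_s$-invariance $\int f\,\dd\mu = \int\bigl(\frac1T\int_0^T f(h_s v)\,\dd s\bigr)\dd\mu(v)$, and bounded convergence gives $\int f\,\dd\mu=\int f\,\dd m$; hence $\mu=m$.

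I expect the main obstacle to be carrying out the equidistribution step in the absence of uniform hyperbolicity: the geodesic flow is only nonuniformly hyperbolic, the horocyclic foliations need not be smooth and can degenerate on the set of higher-rank or flat-strip vectors, so the reparametrization cocycle $\sigma_t$, the local product structure, the comparison of Patterson--Sullivan measures on nearby horocycles, and the recurrence-based returns all have to be controlled uniformly on neighborhoods of the rank $1$ recurrent set rather than on all of $\TM$. One must also invoke, or re-establish in this generality, the mixing of the Bowen--Margulis measure, and verify that the candidate measure $m$ is genuinely $h_s$-invariant and gives full measure to $\Sigma_0$ for the given continuous parametrization, the arbitrariness of which is handled throughout by the continuity and positivity of the density relating the flow-parameter measure to the Patterson--Sullivan conditionals.
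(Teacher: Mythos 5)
Your overall scheme (build a candidate $h_s$-invariant measure from the Bowen--Margulis measure, prove equidistribution of horocyclic averages for every $v\in\Sigma_0$, then integrate against an arbitrary invariant measure) is the spirit of the earlier result for the \emph{Margulis} parametrization, but it has a genuine gap exactly at the point where the present theorem lives: the passage from the Patterson--Sullivan/Margulis normalization to an arbitrary parametrization. Your load-bearing claim that on each horocycle the conditional $\mu_H$ of $\mu_{BM}$ is ``equivalent, by a positive continuous density,'' to the parameter measure of $h_s$ (e.g.\ arc length) is false in general: in variable curvature these two measures on a horocycle are both atomless and (on $\Sigma_0$) fully supported, but they are typically mutually singular --- equivalence would essentially force the Bowen--Margulis measure into the Lebesgue class, as in constant curvature. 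Without that density, both halves of your argument collapse: the construction of $m$ by ``replacing the conditionals and correcting the transverse factor'' is no longer defined, and the step converting Margulis-normalized equidistribution of pushed horocycle pieces into the $\frac1T\int_0^T f(h_sv)\,\dd s$ averages (``where the arbitrary parametrization is absorbed'') has no justification. Relatedly, you are asserting pointwise equidistribution for \emph{every} $v\in\Sigma_0$ in the arbitrary parametrization, which is strictly stronger than what is available from the Margulis-flow result and is not needed for the statement.

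The correct substitute for the nonexistent density is a correspondence of invariant measures between two flows with the same orbits (Beboutoff--Stepanoff): invariant measures are matched by keeping the same transverse measures on sections (here, weak stable leaves without rank $2$ vectors, which exist because $\tilde S_+$ is $\sigma_0$-null and the surface is a visibility manifold) and replacing the leafwise parameter measure. This is how the paper proceeds: it reduces Theorem~A to the known unique ergodicity of $h^M_s$ on $\Sigma_0$, and the real technical content is showing that this correspondence $\Phi$ sends finite measures to finite measures and back, which is nontrivial precisely because $\Sigma_0$ is a non-compact (indeed not locally compact) union of horocycles; the proof uses compact fundamental domains, compact sections $L\subset\tilde W^{ws}(v_0)$, and the continuity of $(v,w)\mapsto\mu_{H(v)}((v,w))$ to bound the time-change $s^M(v,\pm S)$ on compact sets. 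Your proposal never addresses this finiteness issue, and it is the crux: once you abandon the false density claim, you must either reprove equidistribution directly for the given parametrization (not available) or establish the finite-to-finite property of the reparametrization correspondence, which is the paper's route. The remaining ingredients you invoke (mixing of $\mu_{BM}$, local product structure near rank $1$ recurrent vectors) are not used in the paper's proof of Theorem~A and would add further unproved burdens.
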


As we will see, the presence of flat strips, that is, immersed surfaces isometric to an Euclidean strip $[0,a]\times \reals , a>0$, poses difficulties in the study of the ergodic properties of horocyclic flows. At the end of the article we study the case of nonpositively curved compact surfaces without flat strips, which is a class of manifolds that satisfies interesting properties \cite{SC14}. On these surfaces, we prove the best result one could expect. Geodesics with vanishing curvature are still allowed in this result.

\begin{theoremalpha}
	Let $M$ be an orientable nonpositively curved compact surface without flat strips and let $h_s$ be any horocyclic flow on $T^1 M$. Then the flow $h_s$ is uniquely ergodic.
\end{theoremalpha}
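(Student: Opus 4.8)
The plan is to derive Theorem B from Theorem A. Since $M$ has no flat strips it is in particular not flat (a flat compact surface contains flat strips), so Theorem A applies and provides an $h_s$-invariant Borel probability measure $\mu_0$ on $T^1M$ with $\mu_0(\Sigma_0)=1$, and moreover $\mu_0$ is the only $h_s$-invariant probability measure giving $\Sigma_0$ full measure. Since $T^1M$ is compact and $h_s$ is continuous, at least one $h_s$-invariant probability measure exists (Krylov--Bogolyubov). Hence it is enough to prove that \emph{every} $h_s$-invariant Borel probability measure $\mu$ on $T^1M$ satisfies $\mu(\Sigma_0)=1$: every such $\mu$ then equals $\mu_0$, so $h_s$ has a single invariant probability measure, that is, $h_s$ is uniquely ergodic (and the unique invariant measure is automatically ergodic). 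Note also that $\Sigma_0$ is invariant under both flows, since it is a union of whole horocycles ($h_s$-orbits) and since $g_t$ maps strong unstable horocycles to strong unstable horocycles and preserves rank and geodesic recurrence.

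So fix an $h_s$-invariant Borel probability measure $\mu$; we may assume it ergodic. The first step is the Poincaré recurrence theorem applied to the flow $(h_s,\mu)$ on the compact space $T^1M$: $\mu$-almost every $v$ is recurrent under $h_s$. It therefore suffices to prove the purely geometric statement that \emph{if $v\in T^1M$ is recurrent under $h_s$, then the horocycle through $v$ contains a rank $1$ vector recurrent under the geodesic flow}, i.e.\ $v\in\Sigma_0$. This is where the no-flat-strips hypothesis is used, and where Theorem B goes beyond Theorem A: in the presence of flat strips this geometric statement fails and $h_s$ need not be uniquely ergodic.

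To prove the geometric statement I would pass to the universal cover $\tilde{M}$, where the horocycle through a lift $\tilde{v}$ is a level set of the Busemann function based at the backward endpoint $\xi=\tilde{v}^{-}\in\partial\tilde{M}$. Recurrence of $v$ under $h_s$ says that there are deck transformations $\gamma_n\in\pi_1(M)$ carrying a point far out along this horocycle back near $\tilde{v}$, which gives a recurrence property of $\xi$ together with convergence of the associated horospheres. Using the structure theory of nonpositively curved compact surfaces without flat strips --- the set of rank $1$ vectors is open and has full measure for the Bowen--Margulis measure, the flat set is closed with empty interior, and the geodesic flow enjoys strong density and recurrence properties on the rank $1$ recurrent part (see \cite{SC14}) --- one then produces a forward endpoint $\eta\in\partial\tilde{M}$ such that the geodesic from $\xi$ to $\eta$ is rank $1$ and recurrent under $g_t$; sliding the footpoint along this geodesic to the horosphere of $\tilde{v}$ yields the required rank $1$, $g_t$-recurrent vector on the horocycle of $v$.

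The main obstacle is exactly this geometric statement. One must arrange simultaneously that the produced geodesic is \emph{rank $1$}, that it is \emph{recurrent} under the geodesic flow, and that it lies on the \emph{same} horocycle as $v$: the first two conditions have a ``generic'' flavour while the last is entirely rigid, so they must be reconciled carefully, all the while controlling the interaction of the two flows --- the conjugation $g_{-t}h_s g_t$ is only a reparametrization of $h_s$ whose time change depends on the base point and degenerates as $t\to+\infty$, so horocyclic arclength must be tracked under the geodesic flow --- and the non-uniform hyperbolicity caused by the flat geodesics that the hypothesis still allows (so that the rank $1$ set is neither all of $T^1M$ nor $h_s$-invariant) must be handled throughout. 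An alternative route to $\mu(\Sigma_0)=1$ would be a Hopf-type argument: since $\Sigma_0^{c}$ is $g_t$-invariant, the conservative/dissipative decomposition of the geodesic flow together with the fact that an ergodic $h_s$-invariant measure has conditionals along horocycles absolutely continuous with respect to arclength should force $\mu(\Sigma_0^{c})=0$.
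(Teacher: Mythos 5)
Your reduction of Theorem B to Theorem A has a genuine gap, and it is exactly the step you flag as ``the main obstacle.'' You need every $h_s$-invariant probability measure to give full mass to $\Sigma_0$, and you propose to obtain this from Poincaré recurrence for $h_s$ together with the geometric claim that an $h_s$-recurrent vector lies in $\Sigma_0$. That claim is not proved in your sketch, and the paper's own example shows why it should fail: on a compact surface without flat strips whose curvature vanishes exactly along one closed geodesic, the vector tangent to that geodesic has rank $2$ and all other vectors of its horocycle are asymptotic to the closed geodesic, so this \emph{entire horocycle} is disjoint from $\Sigma_0$ --- this is precisely why the introduction insists there is no direct relation between Theorems A and B, the set $\Sigma_0$ being strictly smaller than $T^1M$ even without flat strips. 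Nothing prevents the vectors of such a horocycle from being $h_s$-recurrent (in this setting horocycles enjoy strong transitivity properties), so ``$h_s$-recurrent $\Rightarrow v\in\Sigma_0$'' cannot serve as the bridge. Your proposed construction (choose $\eta$ so that the geodesic from $\xi=v_-$ to $\eta$ is rank $1$ and $g_t$-recurrent, then slide along it to the horosphere of $\tilde v$) breaks down exactly in this example: when $\xi$ is the backward endpoint of the flat closed geodesic, every geodesic with backward endpoint $\xi$ other than the closed one is backward-asymptotic to it and cannot be recurrent in the required sense, so no admissible $\eta$ exists. The alternative Hopf-type route is likewise only asserted: an invariant measure of an arbitrary continuous parametrization carries no a priori absolute continuity along horocycles, and a $g_t$-invariant, $\mu_{BM}$-null union of horocycles could a priori support $h_s$-invariant mass; ruling this out is the whole difficulty.

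The paper's proof takes a different route that bypasses $\Sigma_0$ altogether: without flat strips, every interval of every horocycle contains rank $1$ vectors, so each $\mu_H$ has full support and infinite half-horocycles; hence the Margulis parametrization $h^M_s$ is defined on all of $T^1M$ by Proposition \ref{horocyclic_flow}. Unique ergodicity of $h^M_s$ is then proved by adapting Coudène's argument --- equicontinuity of the averages $M_1(f\circ g_t)$ obtained from the disintegration of $\mu_{BM}$ over $\partial\tilde M$ (replacing absolute continuity with respect to the weak stable foliation), full support of $\mu_{BM}$, transitivity of the horocyclic foliation, Arzelà--Ascoli and the von Neumann ergodic theorem --- and finally the Beboutoff--Stepanoff correspondence on the compact space $T^1M$ transfers unique ergodicity to any other parametrization $h_s$. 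To salvage your strategy you would have to prove directly that every $h_s$-invariant probability measure charges $\Sigma_0$ fully, which is in effect a consequence of Theorem B rather than a route to it.
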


The parametrization of the horocycles by their arc-length is one of the most natural horocyclic flows to consider. We call this the Lebesgue parametrization of the horocyclic flow. Nevertheless, Marcus's method relies on the Margulis parametrization. This Margulis flow $h^M_s$ is characterized by the commutative identity 
$$
g_t\circ h^M_s =h^M_{se^{\delta t}}\circ g_t,
$$
where $g_t$ is the geodesic flow on $T^1M$ and $\delta$ is its topological entropy. The Margulis flow is well defined on surfaces of strictly negative curvature. It can also be defined in the more general setting of Anosov flows with one dimensional stable leaves \cite{Marcus75}. However, there is no such parametrization on the whole unit tangent bundle of a general surface with nonpositive curvature, essentially due to the lack of hyperbolicity of the geodesic flow on some regions. But we can at least define a Margulis-like parametrization for the subset of horocycles which do not cross the non-hyperbolic regions and manage to deduce the results from this Margulis horocyclic flow. In Proposition \ref{horocyclic_flow}, we give quite optimal conditions concerning the set on which the Margulis flow can be defined.

In our previous article \cite{Burniol21}, we already proved that the Margulis horocyclic flow is uniquely ergodic.

\begin{theorem*} \cite[Theorem B]{Burniol21}
Let $M$ be an oriented nonflat compact boundaryless Riemannian surface with nonpositive curvature. Then there is a continuous Margulis horocyclic flow $h^M_s$ on the set  $\Sigma_0$ of horocycles having a rank $1$ vector recurrent under the action of the geodesic flow, and this flow is uniquely ergodic.
\end{theorem*}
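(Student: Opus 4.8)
The plan is to construct the Margulis flow from a Patterson--Sullivan conformal density and to derive its unique ergodicity from mixing of the geodesic flow, using the rank $1$ recurrent vector carried by each horocycle in $\Sigma_0$ to keep the renormalization under control. Passing to the universal cover $\tilde{M}$ with ideal boundary $\partial\tilde M$, I would first build a $\delta$-conformal density $\{\mu_x\}_{x\in\tilde M}$ on $\partial\tilde M$ by Patterson's construction, where $\delta$ is the critical exponent of the fundamental group, equal to the topological entropy of $g_t$ by Manning's theorem. Coupling this density with the Busemann cocycle produces on each unstable horocycle $W$ a conditional measure $\mu^W$ obeying $\mu^{g_tW}(g_t A)=e^{\delta t}\mu^{W}(A)$, which is precisely the scaling recorded in $g_t\circ h^M_s=h^M_{se^{\delta t}}\circ g_t$. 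Parametrizing each horocycle by $\mu^W$-arc length, with the orientation furnished by $M$, defines the flow $h^M_s$ and makes the commutation relation hold by construction. The restriction to $\Sigma_0$ is what makes the parametrization legitimate: on a horocycle carrying a rank $1$ vector recurrent for $g_t$, the measure $\mu^W$ is nonatomic, has full support and depends continuously on the leaf, so $s\mapsto h^M_s$ is a continuous homeomorphic parametrization, while off $\Sigma_0$, on the flat or higher-rank locus, $\mu^W$ degenerates and no such flow exists. Reassembling $\mu^W$ with the analogous stable conditionals and the flow direction yields the Bowen--Margulis--Knieper measure $m$, Knieper's unique measure of maximal entropy; in the $\mu^W$-arc length coordinate $h^M_s$ is a translation, so $m$ is $h^M_s$-invariant, and since $m$ is ergodic and carried by recurrent rank $1$ vectors it gives full mass to $\Sigma_0$.

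\textbf{Reduction to equidistribution.} For uniqueness, let $\nu$ be an $h^M_s$-invariant Borel probability measure on $\Sigma_0$, which I may take ergodic after passing to its ergodic components, and fix a continuous $f$ on $T^1 M$. By Birkhoff, for $\nu$-a.e.\ $y$ the averages $\frac1T\int_0^T f(h^M_s y)\,\dd s$ converge to $\int f\,\dd\nu$. Such a $y$ lies on a horocycle in $\Sigma_0$, hence $v_0:=h^M_{s_0}y$ is a rank $1$ vector recurrent for $g_t$ for some $s_0$; replacing $y$ by $v_0$ changes the average only by a boundary term of size $O(1/T)$, so the limit is unchanged. The commutation relation gives the renormalization
\[
\frac1T\int_0^T f(h^M_s v_0)\,\dd s=\int_0^1 f\big(g_\tau\, h^M_r\, g_{-\tau}v_0\big)\,\dd r,\qquad e^{\delta\tau}=T,
\]
via the substitution $r=s\,e^{-\delta\tau}$. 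It therefore suffices to prove that the right-hand side tends to $\int f\,\dd m$ along some sequence $\tau\to\infty$; since the full sequence already converges to $\int f\,\dd\nu$, this forces $\int f\,\dd\nu=\int f\,\dd m$ for every $f$, i.e.\ $\nu=m$.

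\textbf{Equidistribution via mixing.} The obstruction in the displayed identity is that its base point $g_{-\tau}v_0$ drifts with $\tau$. Here the recurrence of $v_0$ is decisive: I choose times $\tau_n\to\infty$ with $g_{-\tau_n}v_0$ converging to a point $z$ in $\supp m$, so that for large $n$ the base point stays in a fixed neighborhood on which $m$ admits a uniform local product structure. Around it I replace the unit unstable arc by a small box, smearing it in the stable-horocycle and flow directions over scale $\epsilon$; by the product structure this box is a normalized restriction of $m$, so mixing of $g_t$ with respect to $m$ sends $\int (f\circ g_{\tau_n})$ against the box to $\int f\,\dd m$, uniformly in $n$ because the box densities converge in $L^1(m)$. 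The smearing error is controlled \emph{uniformly in} $\tau_n$ by two features of this setting: in nonpositive curvature stable Jacobi fields have nonincreasing norm, so the transverse directions are not expanded by the forward flow and the displacement they introduce stays $O(\epsilon)$; and $f$ is uniformly continuous on the compact $T^1 M$. Letting first $n\to\infty$ and then $\epsilon\to0$ yields convergence to $\int f\,\dd m$, completing the reduction.

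\textbf{Main obstacle.} The genuinely hard points are those that stand in for the uniform hyperbolicity available to Marcus but absent here, since $g_t$ is not Anosov. First, the mixing of $g_t$ for $m$ is not formal; I would obtain it from Babillot's criterion, the nonflatness of $M$ providing the non-arithmeticity of the length spectrum that the criterion demands, which is exactly where the hypothesis that $M$ is nonflat is used. Second, and more delicately, one must establish the nonatomicity, full support and continuous leafwise dependence of the conditional measures $\mu^W$, together with an $m$-a.e.\ local product structure, on the rank $1$ recurrent locus alone, where the hyperbolic behaviour of $g_t$ is only nonuniform. This foundational analysis on $\Sigma_0$, rather than the mixing argument itself, is where the principal difficulty lies.
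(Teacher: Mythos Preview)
This theorem is not proved in the present paper: it is quoted from \cite{Burniol21}. What the paper does contain is the construction of $h^M_s$ on a set $\Sigma$ satisfying suitable hypotheses (Proposition~\ref{horocyclic_flow}) and, separately, a full proof of unique ergodicity only in the no-flat-strips case (Theorem~\ref{thm_un_nostrips}). Your construction of the flow via the Patterson--Sullivan density and the horocycle conditionals $\mu^W$ is the same as the paper's; note however that the paper singles out, besides full support and nonatomicity, the condition that each half-horocycle has infinite $\mu_H$-measure (Proposition~\ref{horocyclic_flow}(ii)), which you do not mention and which is needed so that $s\mapsto h^M_s(v)$ is defined for all $s\in\reals$.

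For unique ergodicity your route genuinely differs from the one the paper exhibits in the analogous Theorem~\ref{thm_un_nostrips}. You run a Marcus/Roblin-style argument: renormalize by $g_t\circ h^M_s=h^M_{se^{\delta t}}\circ g_t$, use backward recurrence of the chosen rank~$1$ vector to keep $g_{-\tau_n}v_0$ near a fixed rank~$1$ point, thicken the unit unstable arc into an $m$-box, and invoke mixing of $(g_t,m)$ via Babillot. The paper instead follows Coud\`ene: it proves directly that the family $\{M_1(f\circ g_t)\}_{t>0}$ is equicontinuous (using the disintegration of $\mu_{BM}$ over $\partial\tilde M$ rather than any local product structure), applies Arzel\`a--Ascoli, and identifies every accumulation point with $\int f\,\dd\mu_{BM}$ via the von~Neumann theorem and transitivity of the horocycle foliation. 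The trade-off is that Coud\`ene's argument sidesteps exactly the two ingredients you flag as delicate --- mixing of $g_t$ and an $m$-a.e.\ local product structure on a set that may accumulate on rank~$2$ vectors --- at the price of a less transparent mechanism; your approach is conceptually more direct and yields equidistribution of individual horocycle arcs, but its correctness on $\Sigma_0$ hinges precisely on controlling the box construction uniformly near the higher-rank locus, which you rightly identify as the main obstacle and which is where the work in \cite{Burniol21} lies.
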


This result states that there is a unique Borel probability measure on $\Sigma_0$ invariant by the Margulis flow $h_s^M$. In other words, the set $\prob(h_s^M)$ of Borel probability measures on $\Sigma_0$ invariant by the flow $h^M_s$ consists of a single element. Here our goal is to study the set $\prob(h_s^L|_{\Sigma_0})$ of Borel probability measures on $\Sigma_0$ invariant by the Lebesgue horocyclic flow $h_s^L$. We prove that there is a one-to-one correspondence between $\prob(h_s^M)$ and $\prob (h_s^L|_{\Sigma_0})$.
The proof relies on the fact that the two parametrizations are related by a continuous family of measures on the whole space, not only on the subset. Then we can generalize the unique ergodicity to any other horocyclic flow, as stated in Theorem A, thanks to the compactness of the space.

We want to clarify that there is no direct relation between Theorem A and B, since the set $\Sigma_0$ is strictly contained in $T^1 M$ even if $M$ is a nonpositively curved compact surface without flat strips. For example, we consider a surface where the curvature vanishes only on a closed geodesic as in \cite{LimaMatheusMelbourne}. A vector tangent to the closed geodesic with zero curvature has rank $2$, and all the other vectors on its horocycle are asymptotic to the closed geodesic, so these vectors are not in $\Sigma_0$.

To prove Theorem B, we will observe that, when there are no flat strips, the Margulis flow is well defined everywhere. So the proof is reduced to working with this parametrization. We can then leverage arguments from \cite{Burniol21} or \cite{Coudene1} concerning the equidistribution of horocycles under the action of the geodesic flow, to prove that horocyclic flows are uniquely ergodic.

The results of this article illustrate a dynamical property known in negative curvature which still holds in nonpositive curvature, despite the loss of uniform hyperbolicity. Our work relies on tools such as the boundary of the universal cover, the Busemann functions \cite{ballmannlectures} and the Patterson-Sullivan theory \cite{LinkPicaud,Link1}. A generalization of the classical construction of the Bowen-Margulis measure is given in \cite{Knieper98}. 
Other well known properties of the geodesic flow on negative curvature such as the uniqueness of the measure of maximal entropy and the asymptotics of the number of closed geodesics have been recently generalized in the absence of conjugate points \cite{ClimenhagaKnieperWar21a,ClimenhagaKnieperWar21b}.

Let us finally give a sketch of the organization of the paper. In Section \ref{Section2} we first recall some basic facts about rank $1$ manifolds of any dimension with nonpositive curvature, we introduce the Bowen-Margulis measure and we prove a result concerning the measure of flat strips. 
In Section \ref{Sectioncont}, we show a continuity property of the Margulis measures on the horocycles for the case of surfaces. In the remaining two sections we prove Theorems A and B. In Section \ref{Section3}, we study the relationship between invariant measures of two different parametrizations of the horocyclic flow on a subset of the unit tangent bundle and we deduce that unique ergodicity is preserved. In Section \ref{Section4} we deal with the case of nonpositively curved surfaces without flat strips.

\subsection*{Acknowledgment}

The author wishes to thank his Ph.D. advisor Yves Coudène for the many helpful suggestions.

\noindent
\begin{minipage}[c]{0.08 \linewidth}
	\includegraphics{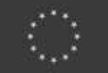}	
\end{minipage}
\begin{minipage}[c]{0.9\linewidth}
	This project has received funding from the European Union’s Horizon 2020 research and innovation program under the Marie Skłodowska-Curie grant agreement No. 754362.
\end{minipage}

\section{On the Bowen-Margulis measure and the measures on the horospheres}\label{Section2}

First, we set up the notation and terminology throughout the paper, as we explain the structure of the manifolds that we work on, \cite{ballmannlectures,ballmannnonpositivebook} are good general references on these subjects. Then we introduce the Bowen-Margulis measure on the unit tangent bundle and measures on the horospheres that are related to this measure. Under suitable hypothesis, we prove that in dimension 2 these measures depend continuously on the horocycle. We will need to apply this fact later, since the measures are involved in the definition of the Margulis parametrization.

\subsection{Geometry of nonpositively curved manifolds}

Let $M$ be a $C^\infty$ connected complete Riemannian manifold. The \textit{unit tangent bundle} $T^1M$ of $M$ is the set of unit length tangent vectors on $M$. We denote by $\pi: T^1M \to M$ the projection of tangent vectors to their base points. For a unit tangent vector $v\in T^1 M$, write $\gamma_v:\reals \to M$ for the unit speed geodesic generated by $v$, i.e. $\gamma_v'(0)=v$. The \textit{geodesic flow} $g_t:T^1M\to T^1 M$ is defined by $g_t(v)=\gamma_v'(t)$, $t\in \reals$, $v\in T^1M$.

We say that the \textit{rank of a geodesic} $\gamma$ of $M$ is the dimension of the set of parallel Jacobi fields along $\gamma$. The rank of $v\in T^1M$ is the rank of the geodesic $\gamma_v$. We will assume that the manifold $M$ has at least one geodesic of rank $1$, in other words, a geodesic whose unique parallel Jacobi field is the tangent field. Such a manifold is said to have rank $1$.  In this article, we will assume that $M$ has a closed geodesic of rank $1$. We are interested in manifolds $M$ that have nonpositive sectional curvature in every tangent plane.

The universal cover of $M$ will be denoted by $\tilde{M}$ and it is equipped with the pullback of the metric of $M$. A lift of a vector $v\in T^1 M$ to the tangent space of the universal cover $T^1 \tilde{M}$ is denoted by $\tilde v$. The same notation can be used for the lifts of objects from $M$ to $\tilde{M}$. However, the geodesic flow of $T^1 \tilde{M}$ is simply denoted by $g_t$, since no confusion can arise. Let $\Gamma$ denote the group of covering transformations of the projection from $\tilde{M}$ to $M$. The set $\Gamma$ is a subgroup of the group of isometries of $\tilde{M}$, which can also act on $T^1\tilde{M}$. The distance associated to the Riemannian metric on $\tilde{M}$ is denoted by $d$, and we use $d_1$ for the Sasaki distance on $T^1\tilde{M}$.

The universal cover $\tilde{M}$ of $M$ is a nonpositively curved simply connected complete manifold. It is then diffeomorphic to $\reals^n$, where $n$ is the dimension of $M$. 
Two geodesic rays $\sigma_1,\sigma_2: [0,+\infty)\to \tilde M$ are \textit{asymptotic} if there exists $C>0$ such that for all $t\ge0$, we have $d(\sigma_1(t),\sigma_2(t))\le C$. Since being asymptotic is an equivalence relation, we can consider the set $\partial \tilde M$ of equivalence classes of geodesic rays on $\tilde M$. Given a vector $v\in \TMu$, there is a positive and a negative geodesic ray generated by $v$, we set the notation $v_+$ and $v_-$, respectively, for the asymptotic classes of these rays in $\partial \tilde{M}$. The union $\tilde M \cup \partial \tilde M$ has a topology which extends that of $\tilde M$ and makes the space compact and the projections from $T^1M$ to $\partial \tilde M$, continuous.

Another geometric tool that is defined in nonpositive curvature are the Busemann functions. Given an asymptotic class $\xi\in\partial \tilde M$ and two points $x,y\in\tilde M$, we define 
$$
\beta_\xi(x,y)= \lim_{t\to+\infty} d(x,\sigma(t)) - d(y,\sigma(t))
$$
where $\sigma$ is a geodesic ray in the class $\xi$. This definition does not depend on the choice of the representative $\sigma $ in $\xi$. For fixed $\xi\in\partial \tilde M$ and $x\in \tilde M$, the map $\beta_\xi(x,\cdot ) : \tilde M\to \reals $ is of class $C^2$
and is called the \textit{Busemann function} at $\xi$ centered at $x$. We also know that $\beta_\xi(x,y)$ depends continuously on $(\xi,x,y)\in \partial\tilde M\times \tilde M\times \tilde M$. A horosphere in $\tilde M$ is a level set of a Busemann function $\beta_\xi(x,\cdot )$. We will rather work with horospheres in $T^1\tilde M$, which are subsets of unit vectors normal to a horosphere of $\tilde{M}$. More precisely, we define the \textit{unstable horosphere} of $v\in T^1M$ as the set 
$$
H(v)= \{-\grad_y \beta_{v_-}(\pi(v),\cdot ) \,|\,y\in \tilde M,\, \beta_{v_-}(\pi(v),y)=0\}. 
$$
All the negative geodesic rays generated by the vectors in $H(v)$ are in the same class $v_-$, i.e. $w_-=v_-$ for all $w\in H(v)$.

It is worth mentioning that horospheres are embedded $C^1$-submanifolds of $T^1\tilde M$ of dimension $n-1$. The set $\tH$ of all unstable horospheres of $T^1 \tilde M$ is a continuous foliation of $T^1\tilde M$. More precisely, if we write $B^k$ for the unit ball of $\reals^k$, for any vector $v\in T^1\tilde{M}$, there is a homeomorphism $\varphi$ from $B^{n-1} \times B^n$ to a neighborhood $U$ of $v$ such that, for every $z\in B^n$,
\begin{enumerate}[label={(\roman*)}]
	\item $\varphi(B^{n-1}\times \{z\})$ is the connected component of $ H(\varphi(0,z)) \cap U $ containing $\varphi(0,z)$,
	\item $\varphi(\,\cdot\, , z)$ is a $C^1 $ diffeomorphism onto its image,
	\item  and $z \mapsto \varphi(\,\cdot \,, z) $ is continuous in the $C^1$-topology.
\end{enumerate}
Horospheres on the quotient $T^1M$ are defined as the projection of the horospheres on $T^1 \tilde M$, and they also form a foliation $\mathcal{H}$ of $T^1 M$.

In what follows, we denote by $\tilde{R}_1$ the subset of $T^1\tilde M$ of rank $1$ vectors. The set of \textit{geodesic endpoint pairs} is
$$E(\tilde{M}):=\{(v_-,v_+)\in \partial\tilde{M}\times \borderM \,|\,v\in \TMu \}$$ and it has a subset of rank $1$ geodesic endpoint pairs
$$E_1(\tilde{M}):=\{(v_-,v_+)\in \partial\tilde{M}\times \borderM \,|\,v\in \tilde{R}_1 \}.$$ Finally, we consider
the map 
\begin{equation*}
	\begin{matrix}
		P :& T^1 \tilde M  & \longrightarrow & \borderM\times\borderM\times \reals  \\
		   & v & \longmapsto & (v_-,v_+,\beta_{v_-}(0,\pi(v))),
	\end{matrix}
\end{equation*}
where $0\in\tilde M$ is a fixed point.

The boundary of $\tilde{M}$ is useful because rank $1$ geodesics are determined by their endpoints in $\partial\tilde M$.

\begin{lemma}
		The map $P|_{\tilde{R}_1}:\tilde{R}_1\to E_1(\tilde{M})\times \reals$ is a homeomorphism.
	\end{lemma}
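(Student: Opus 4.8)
The plan is to show that $P|_{\tilde R_1}$ is a continuous bijection onto $E_1(\tilde M)\times\reals$ whose inverse is continuous, the last point being the only genuinely delicate one.

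\emph{Bijectivity.} The geometric input is that a rank $1$ geodesic of $\tilde M$ is determined by its pair of endpoints at infinity: if a geodesic $\gamma'$ satisfies $\gamma'(\pm\infty)=\gamma(\pm\infty)$ and $\gamma$ has rank $1$, then $\gamma'=\gamma$ up to an additive reparametrization — otherwise $\gamma$ and $\gamma'$ would be distinct and bi-asymptotic, hence would bound a flat strip by the Flat Strip Theorem, producing a nontrivial parallel Jacobi field along $\gamma$ (see \cite{ballmannlectures}). I will also use that $t\mapsto\beta_{v_-}(0,\pi(g_tv))$ is strictly monotone — in fact $\beta_{v_-}(0,\pi(g_tv))=\beta_{v_-}(0,\pi(v))-t$, since the forward geodesic vector field is the negative gradient of the Busemann function centered at $v_-$. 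Injectivity then follows: if $P(v)=P(w)$ with $v,w\in\tilde R_1$, then $v_-=w_-$ and $v_+=w_+$, so the uniqueness statement gives $w=g_{t_0}v$ for some $t_0$, and equality of the third coordinates forces $t_0=0$. For surjectivity, given $(\xi,\eta,s)\in E_1(\tilde M)\times\reals$, choose $u\in\tilde R_1$ with $u_-=\xi$, $u_+=\eta$ (possible by definition of $E_1(\tilde M)$); since $t\mapsto\beta_\xi(0,\pi(g_tu))$ is a bijection of $\reals$, there is $t_0$ with $\beta_\xi(0,\pi(g_{t_0}u))=s$, and $v:=g_{t_0}u\in\tilde R_1$ (rank is a property of the geodesic) satisfies $P(v)=(\xi,\eta,s)$.

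\emph{Continuity of $P$} is immediate from the continuity of $v\mapsto(v_-,v_+)$, of $v\mapsto\pi(v)$, and of $(\xi,x,y)\mapsto\beta_\xi(x,y)$ recalled in the text, so $v\mapsto\beta_{v_-}(0,\pi(v))$ is continuous.

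\emph{Continuity of the inverse} is the heart of the proof. Let $(\xi_n,\eta_n,s_n)\to(\xi,\eta,s)$ in $E_1(\tilde M)\times\reals$ with preimages $v_n,v\in\tilde R_1$; I want $v_n\to v$. It is enough to show (a) the points $\pi(v_n)$ remain in a compact subset of $\tilde M$, and (b) every subsequential limit $w$ of $(v_n)$ in $\TMu$ equals $v$. Point (b) is soft: continuity gives $w_-=\xi$, $w_+=\eta$, $\beta_{w_-}(0,\pi(w))=s$, and since $(\xi,\eta)\in E_1(\tilde M)$ the geodesic $\gamma_w$ has the endpoints of a rank $1$ geodesic, hence by the uniqueness statement is one, so $w\in\tilde R_1$, $P(w)=P(v)$, and $w=v$ by injectivity. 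Point (a) is the main obstacle: one must prevent $d(0,\pi(v_n))\to\infty$. I expect this to use that a rank $1$ geodesic of the Hadamard manifold $\tilde M$ is contracting (Morse) — here the compactness of $M$, i.e. cocompactness of $\Gamma$, enters — so that geodesics whose endpoints are close to $\xi$ and $\eta$ cannot stray far from $\gamma_v$: a hypothetical escaping subsequence of $\gamma_{v_n}$ would accumulate on a configuration at infinity incompatible with the absence of a flat strip asymptotic to $\gamma_v$. Once $\gamma_{v_n}$ is confined to a bounded neighbourhood of $\gamma_v$, it subconverges uniformly on compact sets to a geodesic with endpoints $\xi,\eta$, necessarily $\gamma_v$, and the normalization $\beta_{\xi_n}(0,\pi(v_n))=s_n\to s$ pins $\pi(v_n)$ near the point of $\gamma_v$ at Busemann level $s$, namely $\pi(v)$. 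Combining (a) and (b), every subsequence of $(v_n)$ subconverges to $v$, so $v_n\to v$ and $(P|_{\tilde R_1})^{-1}$ is continuous. This properness step (a) is where the rank $1$ hypothesis is genuinely used, and is the part I expect to require the most care.
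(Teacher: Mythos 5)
Your overall structure matches the paper's: injectivity via the flat strip theorem (bi-asymptotic distinct geodesics bound a flat strip, impossible for rank $1$), surjectivity from the definition of $E_1(\tilde M)$ together with monotonicity of $t\mapsto\beta_{v_-}(0,\pi(g_tv))$, and continuity of $P$ from the continuity of the endpoint maps and of Busemann functions. These parts are correct and complete.

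The gap is exactly where you flag it: step (a), the properness of the family of rank $1$ geodesics with endpoints converging to $(\xi,\eta)$, is asserted but not proved — you write that you ``expect'' it to follow from a contraction/Morse property of rank $1$ geodesics and from cocompactness, without giving the argument that an escaping subsequence leads to a contradiction. This is precisely the content the paper outsources to Ballmann: Lemma III.3.1 of \cite{ballmannlectures} states that for a rank $1$ geodesic $\gamma_v$ in a Hadamard manifold and any $\varepsilon>0$ there are neighborhoods $U_-$, $U_+$ of $v_-$, $v_+$ in $\tilde M\cup\borderM$ such that every geodesic with endpoints in $U_-\times U_+$ passes within $\varepsilon$ of $\pi(v)$ (and hence also has rank $1$); together with the convergence-of-geodesics facts of Section II.2 this gives both your (a) and the rank $1$ property of subsequential limits at once. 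Note two points: first, this lemma needs no cocompactness of $\Gamma$ — it is a statement about the single rank $1$ geodesic in the Hadamard manifold $\tilde M$, so your appeal to compactness of $M$ is a red herring; second, its proof (a limiting argument producing a flat strip, or at least a flat half-plane, along $\gamma_v$ if the conclusion fails) is genuinely nontrivial, so as written your argument is incomplete unless you either reproduce that proof or cite it as the paper does. With that citation inserted at step (a), your subsequence argument closes correctly.
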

	
	\begin{proof}
		It is injective because if two vectors are positively and negatively asymptotic then they bound a flat strip, in particular they cannot have rank $1$. Clearly $P$ is also surjective. The continuity of $P$ and $P^{-1}$ follows from Section II.2 and Lemma III.3.1 of \cite{ballmannlectures}.
\end{proof}

\subsection{Construction of the measure}\label{Section1}
In this section, the nonpositively curved Riemannian manifold $M$ is assumed to have a closed rank $1$ geodesic. It is also assumed that $M$ is \textit{non-elementary}. This means that the limit set $\Lambda(\Gamma)\subset \partial \tilde M$ of $\Gamma$ is infinite. Recall that $\Lambda(\Gamma)$ is the set of accumulation points in $\tilde M\cup \partial \tilde M$ of an (any) orbit $\Gamma \tilde x$ of a (any) point $\tilde x\in  \tilde M$. For example, a compact nonpositively curved manifold is always non-elementary. We refer the reader who is not familiar with the Patterson-Sullivan theory and the construction of the Bowen-Margulis measure to \cite{Patterson76,Sullivan79,Roblin03}. The main results have been generalized to rank $1$ manifolds of nonpositive curvature \cite{Knieper98,Knieper02}, especially we refer to the ergodic properties and the Hopf-Tsuji-Sullivan theorem \cite{Link1,LinkPicaud}.

We briefly recall some useful facts from this theory. The Poincaré series of $\Gamma$ at $s\in \reals$ is 
$$ \sum_{\gamma\in \Gamma} e^{-sd(x,\gamma(y))}, $$
where $x$ and $y$ are two points of $\tilde M$. It is natural to wonder whether this series converges or diverges, once noted that the result does not depend on the choice of $x$ and $y$. We also define the critical exponent $\delta (\Gamma)$ of $\Gamma$ by the formula
$$ \delta(\Gamma) =\limsup_{R\to +\infty}\frac{1}{R}\log \# \{\gamma\in\Gamma \mid d(x,\gamma(y))\le R\},$$
which is a strictly positive number.
The Poincaré series converges for $s>\delta(\Gamma)$ and diverges for $s<\delta(\Gamma)$. The convergence or divergence of the series at $s=\delta(\Gamma)$ establishes two completely different behaviors of the action of the geodesic flow on $M$. We will work under the assumption that the Poincaré series diverges at $s=\delta(\Gamma)$, and we will simply say that $\Gamma$ is divergent. If the manifold $M$ is compact, then $\Gamma$ is divergent.

For $\gamma \in \Gamma$ and a measure $\sigma$ on $ \partial \tilde M$, we denote the pushforward of $\sigma$ by $\gamma$ by $\gamma_*\sigma$, which is defined by $$
\forall A\subset \partial \tilde M \text{ Borel}, \quad\gamma_*\sigma (A)= \sigma (\gamma^{-1}(A)).$$
A $\delta$-dimensional $\Gamma$-invariant conformal density is a family of measures $\{\sigma_{\tilde x}\}_{\tilde{x}\in\tilde{M}}$ on $\partial\tilde M$ with the following properties:
\begin{enumerate}[label={(\roman*)}]
\item $\forall \tilde x \in \tilde M, \, \supp\sigma_{\tilde x}\subset \Lambda(\Gamma)$
\item $\forall \tilde x, \tilde y \in \tilde M, \, \forall \xi\in\borderM,\, \frac{\dd \sigma_{\tilde y}}{\dd \sigma_{\tilde x}}(\xi)=\exp{(-\delta \beta_\xi(\tilde y, \tilde x))}$
\item $\forall \tilde x \in \tilde M, \, \forall \gamma \in \Gamma,\, \gamma_*\sigma_{\tilde x}=\sigma_{\gamma \tilde x}$
\end{enumerate}
As it is shown in \cite{Link1}, when $M$ has a closed rank $1$ geodesic and $\Gamma$ is non-elementary and divergent, there is only one $\Gamma$-invariant conformal density with dimension $\delta=\delta (\Gamma)$ (given by the Patterson construction). It is also proved that the measures $\sigma_x$ have full support in $\Lambda(\Gamma)$, so $\sigma_x(O)$ is positive for every open subset $O$ of $\partial \tilde M$ intersecting with $\Lambda(\Gamma)$. We fix a point $0\in \tilde{M}$ and consider the measure $\sigma_0$ on $\partial\tilde{M}$.

We now detail the definition of the Bowen-Margulis measure made in \cite{Knieper98} by Knieper.
For every two points $\eta,\xi\in\borderM$, let us consider the subset $\pi(P^{-1}(\{(\xi,\eta)\}\times \reals))$ of $\tilde{M}$. It is either empty, a single geodesic or a flat totally geodesic submanifold of dimension at least $2$. We denote by $\vol_{\xi,\eta }$ the induced volume measure on the submanifold $\pi(P^{-1}(\{(\xi,\eta)\}\times \reals))$.

We define an auxiliary measure $\bar{\mu}$ on $E(\tilde{M})$ by its density
\begin{equation*}
	\dd \bar{\mu}(\xi,\eta)= e^{\delta (\beta_\xi(0,p_{\xi,\eta})+ \beta_\eta(0,p_{\xi,\eta}))}\dd \sigma_0(\xi)\,\dd \sigma_0(\eta),
\end{equation*}
where $(\xi,\eta)\in E(\tilde M)$ and $p_{\xi,\eta}$ is any point in $\pi(P^{-1}(\{(\xi,\eta)\}\times \reals))$, which is non-empty. This definition does not depend on the point $0\in \tilde M$.

The \textit{Bowen-Margulis measure} is defined on the Borel $\sigma$-algebra of $T^1\tilde{M}$ by the formula 

\begin{equation}\label{definition_measure}
	\mu_{BM}(A)=\int_{E(\tilde{M})} \vol_{\xi,\eta}(\pi(P^{-1}   (\{(\xi,\eta)\}\times \reals)\cap A))\,\dd \bar{\mu}(\xi,\eta)
\end{equation}
for all Borel subsets $A\subset T^1\tilde{M}$. The measure $\mu_{BM} $ is $\Gamma$ and $g_t$-invariant, so it passes to a $g_t$-invariant measure $\mu_{BM}$ on the quotient $T^1 M$. 

Still under the assumption of divergence of $\Gamma$, it is known that the measure-preserving dynamical system $(T^1M,\mu_{BM}, g_t)$ is ergodic and conservative. Also, the action of $\Gamma$ on $\partial \tilde M$ with respect to the measure class of $\sigma_0$ is ergodic. This is part of the Hopf-Tsuji-Sullivan dichotomy for rank $1$ nonpositively curved manifolds \cite{LinkPicaud}.

\begin{proposition}\label{rank1neg}
	Let $M$ be a connected complete non-elementary manifold with nonpositive curvature. We assume that $M$ contains a rank $1$ closed geodesic 
	and that the fundamental group $\Gamma$ is divergent. Then the rank $1$ set has full Bowen-Margulis measure.
\end{proposition}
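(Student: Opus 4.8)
The plan is to exploit the divergence hypothesis together with the ergodicity of the $\Gamma$-action on $\partial\tilde M$ with respect to the measure class of $\sigma_0$. The complement of the rank $1$ set $\tilde R_1$ inside $T^1\tilde M$ consists of vectors $v$ whose geodesic $\gamma_v$ bounds a nontrivial flat, equivalently of vectors with $(v_-,v_+)\in E(\tilde M)\setminus E_1(\tilde M)$. By the defining formula~\eqref{definition_measure}, $\mu_{BM}(T^1 M\setminus R_1)$ is controlled by the $\bar\mu$-measure of $E(\tilde M)\setminus E_1(\tilde M)$ (together with the volume of the fibers, but the relevant geometric point is the base in $E(\tilde M)$); so it suffices to show $\bar\mu\bigl(E(\tilde M)\setminus E_1(\tilde M)\bigr)=0$, or rather that this bad set, when projected appropriately and made $\Gamma$-invariant, is $\sigma_0\otimes\sigma_0$-null. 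Here I would use the earlier-quoted fact that $\sigma_0$ has full support in $\Lambda(\Gamma)$ and the absolute-continuity relation (ii) in the definition of a conformal density, so that the $\Gamma$-quasi-invariance lets me apply an ergodicity/zero-one argument.

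The key steps, in order, would be: (1) Identify $T^1 M\setminus R_1$ with (the projection of) the set of $v\in T^1\tilde M$ such that $\gamma_v$ lies in a flat strip, and note this is a $\Gamma$-invariant Borel set; in terms of endpoints it is $P^{-1}\bigl((E(\tilde M)\setminus E_1(\tilde M))\times\reals\bigr)$ union the vectors in higher-dimensional flats, all of which are non-rank-$1$. (2) Observe that the set $F:=\{(\xi,\eta)\in E(\tilde M): \text{the flat through }(\xi,\eta)\text{ is nontrivial}\}$ is $\Gamma$-invariant as a subset of $\partial\tilde M\times\partial\tilde M$, and that $\mu_{BM}(T^1M\setminus R_1)>0$ would force $\bar\mu(F)>0$, hence $(\sigma_0\otimes\sigma_0)(F)>0$ since the two measures are in the same class on $E(\tilde M)$ (the density $e^{\delta(\beta_\xi+\beta_\eta)}$ is finite and positive). (3) Deduce from the ergodicity of $\Gamma$ acting diagonally on $\partial\tilde M\times\partial\tilde M$ with respect to $\sigma_0\otimes\sigma_0$ — which follows from divergence and the Hopf–Tsuji–Sullivan dichotomy quoted above — that a $\Gamma$-invariant positive-measure set must be conull. (4) Derive a contradiction with the existence of a closed rank $1$ geodesic: its endpoints $(\xi_0,\eta_0)\in E_1(\tilde M)$ are in the support of $\sigma_0\otimes\sigma_0$, and rank $1$ is an open condition on pairs of endpoints (two nearby pairs still bound no flat strip, by continuity of $P^{-1}$ from Lemma~1 and the structure of flats), so a whole neighborhood of $(\xi_0,\eta_0)$ lies in $E_1(\tilde M)$ and has positive $\sigma_0\otimes\sigma_0$-measure, contradicting conullity of $F$.

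The main obstacle I anticipate is step (3)–(4): making precise the sense in which the diagonal $\Gamma$-action on $\partial\tilde M\times\partial\tilde M$ (or on $E(\tilde M)$) is ergodic for $\sigma_0\otimes\sigma_0$. The Hopf–Tsuji–Sullivan theorem in this rank $1$ setting gives ergodicity of the geodesic flow for $\mu_{BM}$ and of the $\Gamma$-action on $\partial\tilde M$ for $\sigma_0$, and the double ergodicity on $\partial\tilde M\times\partial\tilde M$ is the form one actually needs; I would cite \cite{Link1,LinkPicaud} for this, or else derive it from conservativity and ergodicity of $(T^1M,\mu_{BM},g_t)$ by a standard Hopf-argument/product-structure reduction (a $\Gamma$-invariant $g_t$-invariant function on $T^1\tilde M$ that factors through endpoints is $\mu_{BM}$-a.e.\ constant). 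A secondary technical point is handling the higher-dimensional flats: one must check that $E(\tilde M)\setminus E_1(\tilde M)$, not merely its ``flat strip'' part, is the relevant bad set and that it too is $\Gamma$-invariant and (if positive measure) conull — but this follows from the same argument since the non-rank-$1$ condition is what is $\Gamma$-invariant and what openness of rank $1$ contradicts. Once these are in place, the proof closes by the contradiction in step (4), giving $\mu_{BM}(R_1)=\mu_{BM}(T^1M)$.
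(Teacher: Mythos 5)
Your proposal is correct in substance and closes with the same two key inputs as the paper's proof, namely a zero--one law coming from divergence and the positivity of the measure of an open neighborhood $U_-\times U_+\subset E_1(\tilde M)$ of the endpoints of the closed rank $1$ geodesic, which lie in $\Lambda(\Gamma)=\supp\sigma_0$; but you run the zero--one law in a different place. The paper argues directly upstairs: the rank $1$ set is $g_t$-invariant, divergence gives ergodicity of $(T^1M,\mu_{BM},g_t)$ (already quoted from \cite{LinkPicaud}), so it suffices to show that $\tilde R_1$ has positive measure, which follows at once from $\bar\mu(U_-\times U_+)>0$. You instead argue by contradiction on the boundary, using ergodicity of the diagonal $\Gamma$-action on $\partial\tilde M\times\partial\tilde M$ with respect to $\sigma_0\otimes\sigma_0$; this statement is not among those the paper invokes (it only uses ergodicity of $\Gamma$ on a single boundary factor), so you must either locate it in \cite{Link1,LinkPicaud} or carry out your fallback reduction --- which does work: a $\Gamma$-invariant $F\subset E(\tilde M)$ pulls back under $P$ to a $\Gamma$- and $g_t$-invariant subset of $T^1\tilde M$, and since the fibers $\pi(P^{-1}(\{(\xi,\eta)\}\times\reals))$ have positive induced volume, flow-ergodicity of $\mu_{BM}$ forces $\bar\mu(F)$ to be zero or full. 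So the paper's route is slightly more economical (no double-boundary ergodicity, no contradiction), while yours isolates the boundary picture. One imprecision to fix: a vector of rank $\ge 2$ need not lie in or bound a flat strip --- e.g.\ a rank $2$ vector tangent to a closed geodesic along which the curvature vanishes, as in the paper's introduction --- so the bad set should be described exactly as $P^{-1}\bigl((E(\tilde M)\setminus E_1(\tilde M))\times\reals\bigr)$, defined through rank; your argument in fact only uses this (if a rank $1$ vector shared both endpoints with a distinct geodesic, the flat strip theorem would contradict rank $1$, so no pair in $E_1(\tilde M)$ carries higher-rank vectors), hence the slip is harmless provided you state it that way, as you partly acknowledge in your closing remark.
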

\begin{proof}
	Since $\Gamma $ is divergent, the Bowen-Margulis measure $\mu_{BM}$ is ergodic \cite{LinkPicaud}. The rank $1$ set is $g_t$-invariant, so it has either full or zero measure. The proof reduces to see that the rank $1$ set has positive measure. 
	
	Let $v\in T^1M$ be a rank $1$ $g_t$-periodic vector and $\tilde{v}\in \TMu$ one lift.  
	Since $\tilde{v}$ has rank $1$, the set $P^{-1}   (\{(\tilde{v}_-,\tilde{v}_+)\}\times \reals)$ is the geodesic of $\tilde{v}$ and $\vol_{\tilde{v}_-,\tilde{v}_+}(\pi(P^{-1}   (\{(\tilde{v}_-,\tilde{v}_+)\}\times \reals)\cap A))=\Leb (\{t\in \reals \mid g_t(\tilde{v})\in A\})$. Hence, we have
	\begin{equation*}
		\mu_{BM}(\tilde{R}_1)=\int_{E_1(\tilde{M})} \infty \,\dd \bar{\mu}(\xi,\eta).
	\end{equation*}

	We observe that the support of $\sigma_0$ is the limit set $\Lambda(\Gamma)$, which is a consequence of the minimality of the action of $\Gamma$ on $\Lambda (\Gamma)$ \cite[Proposition 2.8]{Ballmann82}. We also know that $E_1(\tilde{M})$ is open \cite{ballmannlectures}: there exist disjoint open neighborhoods $U_-$ and $U_+$ of $\tilde{v}_-$ and $\tilde{v}_+$ in $\borderM$, respectively, such that $U_-\times U_+ \subset E_1(\tilde{M})$. These neighborhoods have positive $\sigma_0$-measure because $\tilde{v}_-$ and $\tilde{v}_+$ are in $\Lambda(\Gamma)=\supp \sigma_0$ because $v$ is periodic. This implies that the $\bar{\mu}$-measure of $E_1(\tilde{M})$ is positive, so $\mu_{BM}(\tilde{R}_1)=\infty$. 
	When $\tilde{R}_1 $ is projected to the quotient $T^1M$, it still has positive measure.
\end{proof}
The hypothesis on the existence of a closed rank $1$ geodesic is necessary for the Patterson-Sullivan theory to be relevant. Let $f:\TMu \to \reals$ be a measurable function.
The integral of $f$ with respect to $\mu_{BM}$ has a simpler expression now. For $(\xi,\eta)\in E_1(\tilde{M})$ and $t\in \reals$ we denote $f(\xi,\eta ,t):=f(P^{-1}(\xi,\eta ,t))$. The next result says that the volume factors of the Bowen-Margulis measure that appear in (\ref{definition_measure}), and which are used in the cited works, can be forgotten.

\begin{corollary} Let $f:\TMu\to \reals$ be a Borel function. We have
	\begin{equation*}
		\int_{\TMu}f \,\dd \mu_{BM} =\int_{E_1(\tilde{M})} \int_\reals f(\xi,\eta,t) \,\dd t \, \dd \bar{\mu}(\xi,\eta).
	\end{equation*}
	
\end{corollary}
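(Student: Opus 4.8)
The plan is to unfold both sides through the definition (\ref{definition_measure}) of the Bowen--Margulis measure and the change of variables given by $P|_{\tilde R_1}$, using Proposition~\ref{rank1neg} to discard the non-rank-$1$ part. First I would note that, by Proposition~\ref{rank1neg}, the set $\tilde R_1$ has full $\mu_{BM}$-measure, so $\int_{\TMu} f\,\dd\mu_{BM} = \int_{\tilde R_1} f\,\dd\mu_{BM}$, and in (\ref{definition_measure}) one may replace the domain of integration $E(\tilde M)$ by $E_1(\tilde M)$: indeed, by the proof of Proposition~\ref{rank1neg}, $\bar\mu$ is carried by $E_1(\tilde M)$ up to the fact that on $E(\tilde M)\setminus E_1(\tilde M)$ the fibre $\pi(P^{-1}(\{(\xi,\eta)\}\times\reals))$ is a flat submanifold which contributes only to the complement of $\tilde R_1$ and hence integrates to $0$ against any $f$ supported on $\tilde R_1$; more cleanly, one just applies (\ref{definition_measure}) to $A\cap\tilde R_1$ and observes the integrand vanishes outside $E_1(\tilde M)$.

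Next I would fix $(\xi,\eta)\in E_1(\tilde M)$. By the Lemma, the map $P|_{\tilde R_1}$ is a homeomorphism onto $E_1(\tilde M)\times\reals$, so $\pi(P^{-1}(\{(\xi,\eta)\}\times\reals))$ is exactly the single rank-$1$ geodesic with endpoints $(\xi,\eta)$, and the induced volume measure $\vol_{\xi,\eta}$ on this geodesic is the arc-length measure. The parametrization $t\mapsto P^{-1}(\xi,\eta,t)$ is, by the third coordinate of $P$, precisely parametrization by the Busemann cocycle $\beta_{\xi}(0,\pi(\cdot))$ along the geodesic; since $\xi=v_-$ and $v=\gamma_v'(0)$, moving along the geodesic in the direction of $v$ decreases $\beta_\xi(0,\pi(\cdot))$ at unit speed, so $t$ differs from the arc-length parameter only by an orientation-preserving affine reparametrization of slope $\mp 1$, hence $\vol_{\xi,\eta}$ pushes forward under $P^{-1}(\xi,\eta,\cdot)^{-1}$ to Lebesgue measure $\dd t$ on $\reals$. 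Therefore, for any Borel $A$, $\vol_{\xi,\eta}(\pi(P^{-1}(\{(\xi,\eta)\}\times\reals)\cap A)) = \Leb(\{t\in\reals : P^{-1}(\xi,\eta,t)\in A\})$, which is the same computation already carried out in the proof of Proposition~\ref{rank1neg} for the periodic vector.

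Finally I would assemble the pieces: taking $A = f^{-1}(B)$ for Borel $B$ and using the layer-cake / monotone class argument (first for indicator functions, then simple functions, then nonnegative Borel $f$ by monotone convergence, then general $f$ by splitting into positive and negative parts), one gets
\begin{equation*}
	\int_{\TMu} f\,\dd\mu_{BM} = \int_{E_1(\tilde M)}\left(\int_\reals f(P^{-1}(\xi,\eta,t))\,\dd t\right)\dd\bar\mu(\xi,\eta) = \int_{E_1(\tilde M)}\int_\reals f(\xi,\eta,t)\,\dd t\,\dd\bar\mu(\xi,\eta),
\end{equation*}
using Fubini--Tonelli to justify writing the iterated integral (legitimate for $f\ge 0$ by Tonelli, and then for integrable $f$). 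The main obstacle, such as it is, is bookkeeping the exceptional set: one must be careful that the flat-strip fibres over $E(\tilde M)\setminus E_1(\tilde M)$ genuinely contribute nothing to $\int f\,\dd\mu_{BM}$ once $f$ is replaced by $f\cdot\mathbf 1_{\tilde R_1}$, which is where Proposition~\ref{rank1neg} is essential; everything else is the standard unfolding of the disintegrated measure (\ref{definition_measure}) along the foliation by rank-$1$ geodesics together with the identification of the $t$-coordinate of $P$ as an arc-length parameter.
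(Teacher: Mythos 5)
Your argument is correct and is essentially the proof the paper intends: the corollary is stated there without a written proof, as an immediate consequence of Proposition \ref{rank1neg} (full $\mu_{BM}$-measure of the rank-$1$ set, so the integral over $E(\tilde M)$ localizes on $E_1(\tilde M)$) together with the identification, already made in that proposition's proof, of $\vol_{\xi,\eta}$ on a rank-$1$ fibre with Lebesgue measure along the geodesic via $P^{-1}(\xi,\eta,\cdot)$. The only blemish is the phrase ``orientation-preserving affine reparametrization of slope $\mp1$'': since $\beta_{v_-}(0,\pi(g_s v))=\beta_{v_-}(0,\pi(v))-s$, the $t$-parametrization actually reverses the direction of the geodesic flow, but Lebesgue measure is invariant under $t\mapsto -t+c$, so this has no effect on the conclusion.
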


Let $H$ be a horosphere in $\TMu$. It can be described as $H=\{w\in \TMu \mid w_-=v_-,\,\beta_{v_-}(\pi(v),\pi(w))=0\}$, where $v$ is any vector in $H$. We consider the map $P_H:H\to \borderM,\, v\mapsto v_+$, which is identified to the restriction of $P$ to $H$. For every $\xi\in\borderM$, let us consider the subset $P_H^{-1}(\xi)$ of $H$, which can be either empty, a single point or a totally geodesic submanifold of $H$. We denote by $\vol_{\xi,H }$ the induced volume measure of the submanifold $\pi(P_H^{-1}(\xi))$.

If $\eta \in P_H(H)$, we choose $w\in P_{H}^{-1}   (\eta)$ and write $\phi_H(\eta)=e^{\delta(\Gamma) \beta_{\eta}(0,\pi(w))}$, which in fact only depends on $\eta$, but not on $w$. We set $\phi_H(\eta)=0$ if $\eta\in\borderM\setminus P_H(H)$. We define a measure $\mu_H$ on $H$ by

\begin{equation*}
	\mu_{H}(A)=\int_{\partial\tilde{M}} \vol_{\eta,H}(\pi (P_{H}^{-1}(\eta)\cap A))
	\phi_H(\eta)
	\,\dd \sigma_0(\eta),
\end{equation*}
where $A$ is a Borel subset of $H$. This is how we defined a family of measure $\{\mu_H\}_{H\in \tH}$ in \cite{Burniol21}, which are exponentially expanded by the geodesic flow, i.e. they satisfy $$\mu_{g_tH}=e^{\delta t}g_{t*}\mu_{H}.$$ 
The $\Gamma$-invariance of the conformal density $\{\sigma_x\}_{x\in\tilde M}$ yields the $\Gamma$-invariance of the measures on the horospheres: $\gamma_* \mu_H =\mu_{\gamma H}, \, \forall \gamma\in \Gamma$.

As in the Bowen-Margulis measure, the volume factor can be removed if the integration is restricted to an appropriate subset. This simplification is a consequence of the next result.
We define the \textit{singular subset} of $\TMu$ as the set $\tilde{S}=\TMu\setminus \tilde{R}_1$, which is invariant by the geodesic flow and by $\Gamma$. We write $\tilde{S}_+=\{v_+\in \borderM\mid v\in \tilde{S}\}$.

\begin{proposition}
	Let $M$ be a connected complete non-elementary manifold with nonpositive curvature. We assume that $M$ contains a rank $1$ closed geodesic and that the fundamental group $\Gamma$ is divergent. Then the set $\tilde{S}_+$ is $\sigma_0$-negligible.
\end{proposition}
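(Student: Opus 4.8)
The plan is to combine the ergodicity of the $\Gamma$-action on $(\borderM,[\sigma_0])$ with the geometric fact that a forward-recurrent rank $1$ geodesic cannot be positively asymptotic to a singular geodesic. First, $\tilde S_+$ is $\Gamma$-invariant: $\tilde S$ is, and $(\gamma v)_+=\gamma(v_+)$. Moreover $\tilde R_1=P^{-1}(E_1(\tilde M)\times\reals)$ (if a geodesic with endpoints in $E_1(\tilde M)$ had rank $\ge 2$ it would bound a flat strip together with the rank $1$ geodesic with the same endpoints, forcing that one to have rank $\ge 2$), and $P$ is continuous, so $\tilde R_1$ is open, $\tilde S$ is closed, and $\tilde S_+=\{v_+:v\in\tilde S\}$ is $\sigma$-compact, in particular Borel. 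By the Hopf--Tsuji--Sullivan dichotomy, $\sigma_0(\tilde S_+)\in\{0,\sigma_0(\borderM)\}$, so it suffices to show $\sigma_0(\borderM\setminus\tilde S_+)>0$.

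The key lemma is: if $\tilde v\in\TMu$ has rank $1$ and its image $v\in\TM$ is forward-recurrent for the geodesic flow, then $\tilde v_+\notin\tilde S_+$. Suppose instead that $w\in\tilde S$ satisfies $w_+=\tilde v_+$. As $\gamma_w$ and $\gamma_{\tilde v}$ are positively asymptotic, after reparametrizing $w$ the convex function $g(t)=d(\gamma_w(t),\gamma_{\tilde v}(t))$ is bounded, hence non-increasing, on $[0,\infty)$, with $g(t)\to L\ge 0$. Pick $t_n\to+\infty$ and $\gamma_n\in\Gamma$ with $\gamma_n g_{t_n}\tilde v\to\tilde v$. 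The base points $\gamma_n\gamma_w(t_n)$ of $\gamma_n g_{t_n}w$ lie within $g(0)$ of $\gamma_n\gamma_{\tilde v}(t_n)=\pi(\gamma_n g_{t_n}\tilde v)\to\pi(\tilde v)$, so, after passing to a subsequence, $\gamma_n g_{t_n}w\to w_\infty$, and $w_\infty\in\tilde S$ since $\tilde S$ is closed and $g_t$- and $\Gamma$-invariant. For every $s\in\reals$ we have $\pi(g_s w_\infty)=\lim_n\gamma_n\gamma_w(t_n+s)$ and $\pi(g_s\tilde v)=\lim_n\gamma_n\gamma_{\tilde v}(t_n+s)$, whence $d(\gamma_{w_\infty}(s),\gamma_{\tilde v}(s))=\lim_n g(t_n+s)=L$. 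Thus $\gamma_{w_\infty}$ and $\gamma_{\tilde v}$ stay at constant distance $L$: if $L>0$ they bound a flat strip; if $L=0$ they coincide and $\gamma_{\tilde v}=\gamma_{w_\infty}$ has rank $\ge 2$. In both cases $\gamma_{\tilde v}$ has rank $\ge 2$, contradicting $\tilde v\in\tilde R_1$.

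To finish, let $\mathcal R\subset\TM$ be the set of forward-recurrent rank $1$ vectors. By Proposition~\ref{rank1neg} and Poincaré recurrence for the conservative system $(\TM,\mu_{BM},g_t)$, $\mathcal R$ has full $\mu_{BM}$-measure, so its preimage $\tilde{\mathcal R}\subset\TMu$ satisfies $\mu_{BM}(\TMu\setminus\tilde{\mathcal R})=0$. Applying the corollary above to $f=\mathbf 1_{\TMu\setminus\tilde{\mathcal R}}$ gives $\int_{E_1(\tilde M)}\int_\reals\mathbf 1_{\TMu\setminus\tilde{\mathcal R}}(P^{-1}(\xi,\eta,t))\,\dd t\,\dd\bar{\mu}=0$, so for $\bar{\mu}$-a.e.\ $(\xi,\eta)\in E_1(\tilde M)$ there is $t$ with $P^{-1}(\xi,\eta,t)\in\tilde{\mathcal R}$, and then $\eta=(P^{-1}(\xi,\eta,t))_+\in\{u_+:u\in\tilde{\mathcal R}\}$. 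Since $\bar{\mu}(E_1(\tilde M))>0$ (from the proof of Proposition~\ref{rank1neg}) and $\bar{\mu}$ is equivalent to $\sigma_0\otimes\sigma_0$ on $E(\tilde M)$, Fubini yields $\sigma_0(\{u_+:u\in\tilde{\mathcal R}\})>0$. By the key lemma this set is disjoint from $\tilde S_+$, so $\sigma_0(\borderM\setminus\tilde S_+)>0$, and therefore $\sigma_0(\tilde S_+)=0$.

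The main obstacle is the key lemma: one has to check that the limit vector $w_\infty$ exists and is still singular, and, above all, that $d(\gamma_{w_\infty}(s),\gamma_{\tilde v}(s))$ is independent of $s$ — this rests on the monotonicity of $g$ on $[0,\infty)$ together with $t_n\to+\infty$ — so that the flat strip theorem applies. The measurability of $\{u_+:u\in\tilde{\mathcal R}\}$ and the descent of null sets between $\TM$ and $\TMu$ are routine.
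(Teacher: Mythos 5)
Your proposal is correct and follows essentially the same route as the paper: ergodicity of the $\Gamma$-action under divergence gives the zero-or-full dichotomy, and one then exhibits a positive-$\sigma_0$-measure set of forward endpoints of recurrent rank~$1$ vectors that is disjoint from $\tilde{S}_+$, using Proposition \ref{rank1neg}, conservativity/Poincar\'e recurrence, and the disintegration of $\mu_{BM}$ over $\bar{\mu}$. The only difference is that where the paper cites \cite[Lemma 2.3]{Burniol21} for the disjointness, you prove it directly with a correct recurrence-plus-flat-strip argument, which makes the step self-contained but does not change the overall structure.
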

\begin{proof}
	In \cite[Proposition 4]{LinkPicaud} it is shown that $\Gamma$ acts ergodically on $\borderM$ when $\Gamma $ is divergent. Since $\tilde{S}$, so $\tilde{S}_+$, is $\Gamma$-invariant, then $\sigma_0(\tilde{S}_+)=0$ or $\sigma_0(\tilde{S}_+)=\sigma_0(\borderM)$.
	
	The system $(T^1M,\mu_{BM}, g_t)$ is conservative by \cite[Proposition 4]{LinkPicaud}. Thanks to the Poincaré recurrence theorem \cite[1.1.5]{AaronsonIET}, since $T^1M$ is a second-countable metric space, $\mu_{BM}$-almost every vector of $T^1 M$ is positively and negatively $g_t$-recurrent. The lift $\widetilde{Rec}$ of the set of recurrent points to $\TMu$ is a subset of full $\mu_{BM}$-measure.
	
	We have seen in Proposition \ref{rank1neg} that $\tilde{R}_1$ is also a subset of full measure. Then the intersection $\tilde{R}_1\cap \widetilde{Rec}$ has also full measure. We denote by $E\subset \borderM$ the set of positive endpoints of rank 1 vectors in $T^1 \tilde{M}$ such that their projections to $T^1M$ are both positively and negatively recurrent under the geodesic flow, i.e. $E= \{ v_+ \mid v\in \tilde{R}_1\cap \widetilde{Rec} \}$. The $\sigma_0$-measure of $E$ has to be positive because $P(\tilde{R}_1\cap \widetilde{Rec})\subset E\times E\times \reals$.
	
	Finally, we observe that $E$ and $\tilde{S}_+$ are disjoint: if $v$ is a rank $1 $ recurrent vector, all the vectors on its stable horosphere have rank $1$ \cite[Lemma 2.3]{Burniol21}, so all the vectors pointing to $v_+$ have rank $1$ also, which means that $v_+\notin \tilde{S}_+$.
	We conclude that the complement of $\tilde{S}_+$ has positive measure, so $\tilde{S}_+$ is $\sigma_0$-negligible.
\end{proof}

\begin{corollary} \label{simplification_measure}
	Let $H$ be a horosphere in $\TMu$ and $A$ a Borel subset of $H$. Then
	\begin{equation*}
		\mu_{H}(A)=\int_{P_H(H)\setminus \tilde{S}_+} \mathbf{1}_{A}(P_H^{-1}(\eta))
		\phi_H(\eta)
		\,\dd \sigma_0(\eta)
	\end{equation*}
	\begin{equation*}
		=\int_{P_H(H\cap \tilde{R}_1)} \mathbf{1}_{A}(P_H^{-1}(\eta))
		\phi_H(\eta)
		\,\dd \sigma_0(\eta),
	\end{equation*}
\end{corollary}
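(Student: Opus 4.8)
The plan is to derive Corollary~\ref{simplification_measure} directly from the definition of $\mu_H$ by showing that the regions excised from the domain of integration contribute nothing. Recall
$$
\mu_{H}(A)=\int_{\partial\tilde{M}} \vol_{\eta,H}(\pi (P_{H}^{-1}(\eta)\cap A))\,\phi_H(\eta)\,\dd \sigma_0(\eta).
$$
First I would split the integrand's support using $\phi_H$: by definition $\phi_H(\eta)=0$ for $\eta \notin P_H(H)$, so only $\eta \in P_H(H)$ matters, and on this set I want to replace $\vol_{\eta,H}(\pi(P_H^{-1}(\eta)\cap A))$ by the indicator $\mathbf{1}_A(P_H^{-1}(\eta))$. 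This replacement is legitimate precisely when $P_H^{-1}(\eta)$ is a single point, which happens exactly when the corresponding geodesic has rank $1$: if $v \in H$ has rank $1$, then $P|_{\tilde R_1}$ is injective (Lemma~\ref{...}, the homeomorphism lemma), so $P_H^{-1}(v_+) = \{v\}$ and $\pi(P_H^{-1}(v_+))$ is a single point, on which the induced volume measure is a Dirac mass. Conversely, on the singular part $P_H^{-1}(\eta)$ may be a higher-dimensional totally geodesic submanifold, but those $\eta$ lie in $\tilde S_+$.

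Next I would invoke the preceding Proposition to the effect that $\tilde{S}_+$ is $\sigma_0$-negligible. Writing $\partial \tilde M = (\partial \tilde M \setminus \tilde S_+) \cup \tilde S_+$, the contribution of $\tilde S_+$ to the integral vanishes since $\sigma_0(\tilde S_+)=0$ and the integrand is bounded on compacta (or simply: integration over a null set is zero). Hence
$$
\mu_H(A) = \int_{(\partial \tilde M \setminus \tilde S_+)} \vol_{\eta,H}(\pi(P_H^{-1}(\eta)\cap A))\,\phi_H(\eta)\,\dd\sigma_0(\eta) = \int_{P_H(H)\setminus \tilde S_+} \vol_{\eta,H}(\pi(P_H^{-1}(\eta)\cap A))\,\phi_H(\eta)\,\dd\sigma_0(\eta),
$$
the second equality again using $\phi_H \equiv 0$ off $P_H(H)$. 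For $\eta \in P_H(H)\setminus \tilde S_+$, the fibre $P_H^{-1}(\eta)$ consists of a single rank $1$ vector $w$, so $\pi(P_H^{-1}(\eta)) = \{\pi(w)\}$ is one point and $\vol_{\eta,H}$ is the unit Dirac mass there; therefore $\vol_{\eta,H}(\pi(P_H^{-1}(\eta)\cap A)) = \mathbf{1}_A(w) = \mathbf{1}_A(P_H^{-1}(\eta))$. This yields the first displayed formula of the corollary.

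For the second formula I would observe that $P_H(H)\setminus \tilde S_+ = P_H(H \cap \tilde R_1)$ up to a $\sigma_0$-null set: if $\eta \in P_H(H)\setminus \tilde S_+$ then its unique preimage $w$ satisfies $w_+ = \eta \notin \tilde S_+$, and since $w \in H$ shares the negative endpoint $v_-$ common to all vectors of $H$, the rank~$1$ criterion via endpoints (a vector is rank $1$ as soon as its ordered endpoint pair lies in $E_1(\tilde M)$, which is open, combined with the fact proved in the previous proposition that rank-$1$-ness propagates along horospheres of recurrent rank~$1$ vectors) gives that $w$ has rank $1$, hence $\eta \in P_H(H\cap \tilde R_1)$; the reverse inclusion $P_H(H\cap \tilde R_1)\subset P_H(H)\setminus \tilde S_+$ is immediate since the positive endpoint of a rank~$1$ vector is never in $\tilde S_+$. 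Substituting gives the second formula.

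The routine parts here are the measure-theoretic bookkeeping; the only genuine point requiring care is the identification $\pi(P_H^{-1}(\eta)) = \{\text{one point}\}$ for $\eta \notin \tilde S_+$, i.e. checking that $P_H^{-1}(\eta)$ is a singleton rather than a positive-dimensional flat piece. This follows from the injectivity of $P$ on $\tilde R_1$ established in the homeomorphism lemma, once we know the unique preimage has rank~$1$ --- and that is exactly the content of $\eta \notin \tilde S_+$ together with the horosphere-propagation of rank~$1$. So the main (minor) obstacle is assembling these facts cleanly; there is no substantial analytic difficulty.
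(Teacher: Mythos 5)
Your proof follows essentially the same route as the paper: kill the region off $P_H(H)$ using $\phi_H\equiv 0$ there, discard $\tilde{S}_+$ because it is $\sigma_0$-null, and observe that over $P_H(H)\setminus\tilde{S}_+$ the fibre $P_H^{-1}(\eta)$ is a single rank~$1$ vector so the volume factor degenerates to $\mathbf{1}_A$. Two blemishes in your write-up, neither fatal. First, your claimed exact reverse inclusion ``$P_H(H\cap\tilde{R}_1)\subset P_H(H)\setminus\tilde{S}_+$, since the positive endpoint of a rank~$1$ vector is never in $\tilde{S}_+$'' is false in general: $\tilde{S}_+$ is the set of endpoints of \emph{some} singular vector, and a rank~$1$ vector may well share its positive endpoint with a rank~$2$ one (e.g.\ the vectors asymptotic to a closed zero-curvature geodesic in the example from the introduction). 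What is true, and all that is needed for the second displayed equality, is that the two domains differ only inside $\tilde{S}_+$, i.e.\ $P_H(H\cap\tilde{R}_1)\setminus\bigl(P_H(H)\setminus\tilde{S}_+\bigr)\subset\tilde{S}_+$, which is $\sigma_0$-null; this is exactly how the paper concludes. Second, your justification of the forward inclusion (openness of $E_1(\tilde M)$ plus ``rank-$1$-ness propagates along horospheres of recurrent vectors'') imports facts that are irrelevant here and, in the recurrent-vector form, not even applicable: the correct reason is immediate from the definition of $\tilde{S}_+$ --- if $\eta\notin\tilde{S}_+$ then no vector of rank $\ge 2$ has positive endpoint $\eta$, so the vector $w\in H$ with $w_+=\eta$ has rank~$1$, and uniqueness of $w$ then follows from the flat-strip argument (injectivity of $P$ on $\tilde{R}_1$), as you say. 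With these two repairs your argument coincides with the paper's proof.
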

\begin{proof}
	On the one hand, $\phi_H$ vanishes outside $P_H(H)$ and $\tilde{S}_+$ has $0$ measure, so we have
	\begin{equation*}
		\int_{\partial\tilde{M}} \vol_{\eta,H}(\pi (P_{H}^{-1}(\eta)\cap A))
		\phi_H(\eta)
		\,\dd \sigma_0(\eta)=		\int_{P_H(H)\setminus \tilde{S}_+} \! \! \! \! \vol_{\eta,H}(\pi (P_{H}^{-1}(\eta)\cap A))
		\phi_H(\eta)
		\,\dd \sigma_0(\eta).
	\end{equation*}
	On the other hand, for $\eta \in P_H(H\cap \tilde{R}_1)$, the set $P_H^{-1}(\eta )$ consists of a single point, so the volume factor is just the indicator function of $A$. Since $P_H(H)\setminus \tilde{S}_+ \subset P_H(H\cap \tilde{R}_1)$, we obtain
	\begin{equation*}
		\mu_H(A)	=	\int_{P_H(H)\setminus \tilde{S}_+} \mathbf{1}_A(P_{H}^{-1}(\eta))
		\phi_H(\eta)
		\,\dd \sigma_0(\eta).
	\end{equation*}
	The second equality is true because $P_H(H)\setminus \tilde{S}_+$ has full measure in $P_H(H\cap \tilde{R}_1)$.
\end{proof}
An immediate consequence is that $H\cap \tilde{R}_1 $ is of full $\mu_H$-measure in $H$. Finally, we recall some known conditions for a rank $1$ manifold to have a closed rank $1$ geodesic.

\begin{proposition}\label{conditions}
	\cite{ballmannlectures,ballmannnonpositivebook} Let $M$ be a connected complete non-elementary rank $1$ Riemannian manifold of nonpositve curvature.
	If we assume one of the following:
	\begin{itemize} 
		\item the limit set $\Lambda (\Gamma)$ of $\Gamma$ is equal to the whole boundary $\borderM$,
		\item $M$ has finite Riemannian volume;
	\end{itemize}
	then $M$ has a closed rank $1$ geodesic.
\end{proposition}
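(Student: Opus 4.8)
The plan is to reduce both hypotheses to the \emph{duality condition} of Chen and Eberlein and then to invoke Ballmann's theorem on rank $1$ axial isometries. Recall that $\Gamma$ satisfies the duality condition if for every $v\in\TMu$ there is a sequence $(\gamma_n)\subset\Gamma$ with $\gamma_n(x)\to v_+$ and $\gamma_n^{-1}(x)\to v_-$ in $\tilde M\cup\borderM$ for some (hence any) $x\in\tilde M$; equivalently, every vector of $\TM$ is non-wandering for the geodesic flow. The key input \cite{ballmannlectures,Ballmann82} is that a non-elementary group $\Gamma$ acting on a Hadamard manifold $\tilde M$ which contains a rank $1$ geodesic and which satisfies the duality condition must contain a rank $1$ axial isometry; the axis of such an isometry then projects to a closed rank $1$ geodesic of $M=\tilde M/\Gamma$. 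So it is enough to verify the duality condition under each of the two assumptions.

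First I would deal with the finite-volume case. Then the Liouville measure on $\TM$ is finite, $g_t$-invariant, and of full support, so by the Poincaré recurrence theorem every vector of $\TM$ is non-wandering for the geodesic flow, and a routine lifting argument turns this into the duality condition for every $v\in\TMu$. For the case $\Lambda(\Gamma)=\borderM$ I would observe that the set of $v\in\TMu$ satisfying the duality property is closed, $g_t$-invariant and $\Gamma$-invariant, and then combine this with the minimality of the $\Gamma$-action on $\Lambda(\Gamma)$ recalled above (\cite[Proposition 2.8]{Ballmann82}) and the hypothesis $\Lambda(\Gamma)=\borderM$ to conclude that this set exhausts $\TMu$; this implication is classical and I would simply cite \cite{ballmannlectures,ballmannnonpositivebook}.

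It then remains to recall the construction of the closed geodesic, which is where the real work lies. One starts from a rank $1$ geodesic $c$ of $\tilde M$, which exists by hypothesis; since $c$ bounds no flat half-plane, it satisfies a weak, hyperbolic-like contraction estimate along long segments. Applying the duality condition to $c'(0)$ produces $\gamma_n\in\Gamma$ with $\gamma_n x\to c(+\infty)$ and $\gamma_n^{-1}x\to c(-\infty)$; the contraction property of $c$ is exactly what forces the displacement functions $y\mapsto d(y,\gamma_n y)$ to attain a minimum for $n$ large, so that each such $\gamma_n$ is axial, with axis $c_n$ whose two endpoints converge to those of $c$. Since $E_1(\tilde M)$ is open (as already used in the proof of Proposition \ref{rank1neg}), $c_n$ has rank $1$ for $n$ large, and being the axis of a rank $1$ axial isometry it descends to a closed rank $1$ geodesic of $M$. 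I expect the middle step to be the main obstacle: upgrading ``$\gamma_n$ chases both endpoints of $c$'' to ``$\gamma_n$ is axial with axis close to $c$'' genuinely requires the contracting behaviour of rank $1$ geodesics and is the technical heart of \cite{ballmannlectures,Ballmann82}.
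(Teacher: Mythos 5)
You should note first that the paper gives no proof of Proposition \ref{conditions} at all: it is quoted from \cite{ballmannlectures,ballmannnonpositivebook}, so the only meaningful comparison is with the classical arguments in those references. Your outline is indeed that classical route: reduce to the duality condition, invoke Ballmann's theorem producing a rank~$1$ axial isometry from a rank~$1$ geodesic, and project its axis. The finite-volume branch (finite, fully supported, $g_t$-invariant Liouville measure, Poincar\'e recurrence, nonwandering set closed, Chen--Eberlein equivalence with duality) is correct, and you rightly identify the axial-isometry step as the technical heart and defer it to \cite{ballmannlectures,Ballmann82}, exactly as the paper does.

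The genuine gap is your treatment of the first bullet, $\Lambda(\Gamma)=\borderM$. The set of $v\in\TMu$ whose endpoint pair is $\Gamma$-dual is indeed closed, $g_t$- and $\Gamma$-invariant, but minimality of the $\Gamma$-action on $\Lambda(\Gamma)$ is a statement about closed invariant subsets of the \emph{boundary}; it gives no control over closed invariant subsets of $\TMu$ (equivalently, of pairs of endpoints), so ``closed $+$ invariant $+$ minimality $\Rightarrow$ exhausts $\TMu$'' is not a valid inference. What minimality could give is weaker and different: for a fixed $\xi\in\Lambda(\Gamma)$ the set of points $\Gamma$-dual to $\xi$ is nonempty, closed and $\Gamma$-invariant in $\borderM$, hence all of $\borderM$ --- but even this is problematic here, because the minimality statement you invoke (\cite[Prop.~2.8]{Ballmann82}, used in the paper only after a closed rank~$1$ geodesic, i.e.\ a rank~$1$ axial element of $\Gamma$, is already assumed) is itself proved using rank-one elements or the duality condition, so your reduction is dangerously close to circular for the very statement being established. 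The classical proofs in \cite{ballmannlectures} (Chapter III, the flat half-plane Lemma~3.1 and its consequences) instead work directly with sequences $\gamma_n x\to\xi$ where $\xi$ is an endpoint of a rank~$1$ geodesic, using the contraction near that geodesic to produce the axial isometry, rather than first upgrading $\Lambda(\Gamma)=\borderM$ to the full duality condition. To repair your write-up, either quote the precise statements of the references for this bullet (as the paper does), or give a non-circular argument that $\Lambda(\Gamma)=\borderM$ makes the two endpoints of some rank~$1$ geodesic a dual pair, which is the only input your axial-isometry step actually needs.
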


\subsection{Continuity of the measures on the horocycles}\label{Sectioncont}

Let $M$ be a connected complete non-elementary Riemannian surface of nonpositve curvature with a closed rank $1$ geodesic. We assume additionally that the group $\Gamma$ is divergent. The next result expresses the continuity of the measure $\mu_H$ with respect to the horocycle $H$, and is proved from the simplified expressions of the measures.

A horocycle $H$ in $\TMu$ is diffeomorphic to $\reals$, so it makes sense to speak of the \textit{open interval} $(v,w)$ of $H$ between the vectors $v$ and $w$ of $H$. In the same way we can define the closed interval $[v,w]$, the half-open intervals $[v,w),\ (v,w]$.

\begin{proposition}\label{continuity}
	The map
	\begin{equation*}
		\begin{matrix}
			\{(v,w)\in \TMu \times \TMu \,|\,w\in H(v)\} & \longrightarrow & \reals \\
			(v,w) & \longmapsto & \mu_{H(v)}((v,w))
		\end{matrix}
	\end{equation*}
	is continuous.
\end{proposition}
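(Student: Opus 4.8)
The plan is to reduce the continuity of $(v,w)\mapsto \mu_{H(v)}((v,w))$ to the continuity of the conformal density $\sigma_0$ and of the geometric data (endpoints, Busemann functions) that enter the simplified expression in Corollary \ref{simplification_measure}. First I would fix a convergent sequence $(v_n,w_n)\to(v,w)$ with $w_n\in H(v_n)$ and $w\in H(v)$, and transport everything to a single horocycle. Using the continuous foliation chart $\varphi$ from Section 2.1 (properties (i)--(iii)), for $n$ large the vectors $v_n,w_n$ lie in a fixed neighborhood $U$ and can be written $v_n=\varphi(a_n,z_n)$, $w_n=\varphi(b_n,z_n')$ with $z_n\to z$, $z_n'\to z$ forced by $w_n\in H(v_n)$; here continuity in the $C^1$-topology guarantees $z_n=z_n'$ in fact, so $v_n,w_n$ lie on the same plaque of $H(\varphi(0,z_n))$. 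The map $P_{H(v)}\colon H(v)\to\borderM$, $u\mapsto u_+$, is a homeomorphism onto its image (it is the restriction of the map $P$, which is a homeomorphism on the rank $1$ set by the Lemma, and $H(v)\cap\tilde R_1$ has full $\mu_{H(v)}$-measure); so I would push the measure forward to $\borderM$ and rewrite, via Corollary \ref{simplification_measure},
\begin{equation*}
\mu_{H(v_n)}((v_n,w_n))=\int_{\borderM}\mathbf 1_{I_n}(\eta)\,\phi_{H(v_n)}(\eta)\,\dd\sigma_0(\eta),
\end{equation*}
where $I_n=P_{H(v_n)}((v_n,w_n))\subset\borderM$ is an open arc and $\phi_{H(v_n)}(\eta)=e^{\delta\beta_\eta(0,\pi(P_{H(v_n)}^{-1}(\eta)))}$.

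The core of the argument is then a dominated-convergence statement for these integrals. I would argue three things. First, the integrand converges pointwise $\sigma_0$-a.e.: the endpoints $(v_n)_-\to v_-$, $(w_n)_+\to w_+$ and the base points converge, so the open arcs $I_n$ converge to $I=P_{H(v)}((v,w))$ in the sense that $\mathbf 1_{I_n}\to\mathbf 1_I$ pointwise outside the two endpoints of $I$, which form a $\sigma_0$-null set; and $\phi_{H(v_n)}(\eta)\to\phi_{H(v)}(\eta)$ because $\eta\mapsto P_{H(v_n)}^{-1}(\eta)$ varies continuously (again by the foliation chart and the homeomorphism $P|_{\tilde R_1}$) and the Busemann cocycle $\beta_\eta(0,\cdot)$ is jointly continuous. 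Second, I would produce a uniform integrable dominating function: since the $v_n$ eventually lie in a compact piece of $T^1\tilde M$, the base points $\pi(P_{H(v_n)}^{-1}(\eta))$ stay in a bounded region uniformly in $n$ and in $\eta$ ranging over the (relatively compact) union $\bigcup_n \overline{I_n}$, so $\phi_{H(v_n)}$ is bounded above by a constant $C$ on that set, and $C\cdot\mathbf 1_{\bigcup_n\overline{I_n}}$ is $\sigma_0$-integrable because $\sigma_0$ is finite. Applying the dominated convergence theorem gives $\mu_{H(v_n)}((v_n,w_n))\to\mu_{H(v)}((v,w))$.

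A couple of technical points need care. One must check that the open arc $I$ and its closure differ by exactly the two boundary points $v_+,w_+$ and that these are genuinely $\sigma_0$-null — this follows since $\sigma_0$ is the Patterson--Sullivan measure which, under divergence of $\Gamma$, is non-atomic on $\Lambda(\Gamma)$ (atoms would contradict ergodicity of the $\Gamma$-action on $\borderM$ together with non-elementarity). One must also handle the orientation bookkeeping of ``$(v,w)$'' versus ``$(w,v)$'': $P_{H(v)}$ is a homeomorphism onto an interval of $\borderM$ (the boundary minus a point), so an oriented interval on $H(v)$ maps to an oriented arc, and nothing goes wrong. The main obstacle I anticipate is the \emph{uniformity} in the domination step: one needs that as $(v_n,w_n)\to(v,w)$ the relevant ``feet'' $\pi(P_{H(v_n)}^{-1}(\eta))$ do not escape to infinity as $\eta$ ranges over the moving arcs, which is where compactness of a neighborhood of $v$ in $T^1\tilde M$ and the joint continuity of $(\xi,\eta)\mapsto \pi(P^{-1}(\xi,\eta,\cdot))$ on rank $1$ pairs (the Lemma) must be combined to bound $\phi_{H(v_n)}$ uniformly; once that bound is in place the rest is routine dominated convergence.
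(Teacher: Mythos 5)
Your route starts from the same rewriting as the paper — via Corollary \ref{simplification_measure}, $\mu_{H(v')}((v',w'))=\int_{\borderM}\mathbf 1_{I'}\,\phi_{H(v')}\,\dd\sigma_0$ with $I'=P_{H(v')}((v',w'))$ squeezed between the open and closed arcs $(v'_+,w'_+)\subset I'\subset[v'_+,w'_+]$ — but you then conclude by dominated convergence along sequences, whereas the paper runs a quantitative $\varepsilon$-argument: outer regularity and atomlessness of $\sigma_0$ to compare the arcs, inner regularity to extract a compact $F\subset(v_+,w_+)\setminus\tilde S_+$, and openness of $E_1(\tilde M)$ together with continuity of $P^{-1}$ on $E_1(\tilde M)\times\reals$ to get convergence of $\phi_{H(v')}$ \emph{uniformly} on $F$. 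Your pointwise a.e.\ convergence step is correct, and for exactly the reasons the paper isolates: for $\eta\in(v_+,w_+)\setminus\tilde S_+$ the pair $(v_-,\eta)$ lies in the open set $E_1(\tilde M)$, so $P^{-1}(v'_{-},\eta,\beta_{v'_-}(0,\pi(v')))\to P_{H(v)}^{-1}(\eta)$; together with the non-atomicity handling of the two arc endpoints this is exactly what the paper's ``compact $F$'' step encodes, and DCT is a legitimate, somewhat leaner substitute (sequential continuity suffices since everything is metrizable).

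The genuine weak point is the domination. What you must show is that the \emph{entire} intervals $(v_n,w_n)$ stay in one fixed compact $K\subset\TMu$: then $\phi_{H(v_n)}(\eta)=e^{\delta\beta_\eta(0,\pi(u))}$ for some $u$ in that interval, and $\beta_\eta(0,x)\le d(0,x)$ gives a uniform bound. Compactness of a neighborhood of $v$ (or of the endpoints $v_n,w_n$) alone does not give this — a priori the moving intervals could bulge far from $[v,w]$ — and the lemma that $P|_{\tilde R_1}$ is a homeomorphism cannot supply the bound at directions $\eta\in\tilde S_+$ inside the moving arcs, where the preimage is a rank-$2$ interval and that lemma says nothing; your proposal dominates on all of $\bigcup_n\overline{I_n}$, so these directions cannot be avoided. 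The missing ingredient is precisely the paper's first step: by continuity of the horocycle foliation, chained along the compact interval $[v,w]$ (not inside a single chart — note $[v,w]$ need not fit in one chart, and a chart does not force your $z_n=z_n'$, since a horocycle may meet a chart in several plaques; fortunately nothing later uses that claim), one produces neighborhoods $V_1,W_1$ and a compact $K\supset[v,w]$ such that $(v',w')\subset K$ whenever $v'\in V_1$, $w'\in W_1\cap H(v')$. Once that is in place your dominated-convergence argument goes through. One further small inaccuracy: $P_{H(v)}$ is a homeomorphism onto its image only when $H(v)$ carries no rank-$2$ intervals (no flat strips through it); as you note, injectivity off the $\sigma_0$-null set $\tilde S_+$ is all the rewriting actually needs.
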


\begin{proof}
	
	The measure of the interval $(v,w)$ is

	\begin{equation*}
		\mu_{H(v)}((v,w))=\int_{P_{H(v)}((v,w))\setminus \tilde{S}_+} \mathbf{1}_{(v,w)}(P_{H(v)}^{-1}(\eta))
		\phi_{H(v)}(\eta)
		\,\dd \sigma_0(\eta).
	\end{equation*}
	The set $P_{H(v)}((v,w))$ is an interval of $\borderM$ that satisfies $(v_+,w_+)\subset P_{H(v)}((v,w))\subset [v_+,w_+]$. Since $\sigma_0$ has no point masses \cite[Proposition 5]{LinkPicaud},
	 we can write
	\begin{equation*}
		\mu_{{H(v)}}((v,w))=\int_{(v_+,w_+)\setminus \tilde{S}_+} 
		\phi_{H(v)}(\eta)
		\,\dd \sigma_0 (\eta).
	\end{equation*}
	
	Let $f:\TMu \to \reals$ be the real continuous function given by $f(u)=\exp(\delta \beta_{u_+}(0,\pi(u)))$. 
	Because of the definition of $\phi_{H(v)}$ and the fact that $(v_+,w_+)\setminus \tilde{S}_+\subset P_{H(v)}(H(v)\cap \tilde{R}_1)$, we have 
	$$
	\forall \eta \in (v_+,w_+)\setminus \tilde{S}_+, \quad \phi_{H(v)}(\eta )= e^{\delta \beta_{\eta}(0,\pi(P_{H(v)}^{-1}(\eta)))} = f(P_{H(v)}^{-1}(\eta)).
	$$
	
	Let $v\in \TMu$ and $w\in H(v)$. We want to show the continuity of the map in the statement at $(v,w)$. Let us explain step by step the needed estimates and at the end we will apply them. We fix $\varepsilon>0$.
	
	\textbf{Boundedness of the integrated function.} Let $K$ be a compact neighborhood of $ [v,w]$ in $\TMu$. By the continuity of horocycles there are open neighborhoods $V_1\subset K$ and $W_1\subset K$ of $v$ and $w$ in $\TMu$ such that if $v'\in V_1 $ and $w'\in W_1 \cap H(v')$ the interval $(v',w')$ is contained in $K$. The function $f$ is bounded on $K$ by a constant $C>0$.
	
	For all $v'\in V_1$, for all $w'\in W_1\cap H(v')$, we have the inclusion
	$$
	P_{H(v')}^{-1}((v'_+,w'_+)\setminus \tilde{S}_+)\subset (v',w') \subset K,
	$$
	which says that, for all $\eta \in (v'_+,w'_+)\setminus \tilde{S}_+$, the quantity $f(P_{H(v')}^{-1}(\eta))$ is bounded by $C$.
	
	\textbf{Approximation of intervals.} Since $\sigma_0$ is finite and $\partial \tilde M$ is a metrizable 
	Since $\sigma_0$ has no point masses and is outer regular \cite[Proposition 18.3]{CoudeneBook}, there are open intervals $A$ and $B$ around $v_+$ and $w_+$ in $\borderM$ of arbitrarily small measure. We choose $A$ and $B$ such that $\sigma_0(A)<\varepsilon/16C$ and $\sigma_0(B)<\varepsilon/16C$. By the continuity of the projection $T^1 \tilde{M} \to \borderM$ to the endpoint, there are neighborhoods $V_2$ and $W_2$ of $v$ and $w$ such that ${V_2}_+\subset A$ and ${W_2}_+\subset B$. For every $v'\in V_2 $ and $w'\in W_2$, we have
	$$
	(v_+,w_+)\triangle(v'_+,w'_+)\subset [v_+,v'_+]\cup [w_+,w'_+]\subset A\cup B,
	$$
	so $\sigma_0((v_+,w_+)\triangle(v'_+,w'_+))\le \sigma_0(A)+\sigma_0(B)<\frac{\varepsilon}{8C}$ (Figure \ref{fig:pic1}).
	
	\begin{figure}[h]
		\centering
		\includegraphics{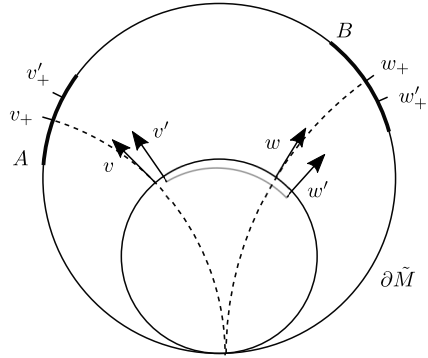}
		\caption{ \label{fig:pic1}
			Positions of the endpoints of the vectors $v,\,w,\,v' $ and $w'$.}
	\end{figure}
	
	\textbf{Continuity of the integrated function uniform with respect to $\eta$.} First, we apply the inner regularity of $\sigma_0$ \cite[Theorem 18.2]{CoudeneBook}: there exists a compact subset $F\subset (v_+,w_+)\setminus \tilde{S}_+$ such that $ \sigma_0(((v_+,w_+)\setminus \tilde{S}_+)\setminus F)<\varepsilon/8C$. 
	
	If $v' \in\TMu$ and $\eta\in P_{H(v')}(H(v')\cap \tilde{R}_1)$ then $(v'_-,\eta )\in E_1(\tilde{M})$. Since $F\subset (v_+,w_+)\setminus \tilde{S}_+\subset P_{H(v)}(H(v)\cap \tilde{R}_1)$, one has that $\{v_-\}\times F$ is a subset of $E^1(\tilde{M})$. The subset $E^1(\tilde{M})$ is open in $\borderM\times \borderM$ and $\{v_-\}\times F$ is compact, so there exists a neighborhood $A$ of $v_-$ in $\borderM$ such that $A\times F\subset E_1(\tilde{M})$. Let $U$ be a neighborhood of $v$ such that $U_+$ is contained in $A$. Now there is a well defined map
	\begin{equation*}\label{interesting_map}
		\begin{matrix}
			U\times F & \longrightarrow & \TMu  \\
			(v',\eta) & \longmapsto & P_{H(v')}^{-1}(\eta)=P^{-1}(v'_-,\eta,\beta_{v'_-}(0,\pi(v'))),
		\end{matrix}
	\end{equation*}	
	which is continuous, because $P^{-1}$ is continuous on $E_1(\tilde{M})\times \reals$ and the Busemann function depends continuously on its three variables.
	Composing by $f$ we obtain a continuous map from $U\times F$ to $\reals$. Since $F$ is compact, this map is continuous at $v$ uniformly with respect the second variable: there exists a neighborhood $U_0\subset U$ of $v$ such that, for all $v'\in U_0$, for all $\eta \in F$,
	$$
	|f(P^{-1}_{H(v')}(\eta )) - f(P^{-1}_{H(v)}(\eta ))|<\frac{\varepsilon}{2\sigma_0((v_+,w_+))}.
	$$
	
	\textbf{Conclusion.} 
	We choose a subset $F$ of $(v_+,w_+)\setminus \tilde{S}_+$ as explained above. Thanks to the integral expressions of the measures we can write,
	for $v'\in\TMu$ and $w'\in H(v')$,
	\[
	|\mu_{H(v)}((v,w))-\mu_{H(v')}((v',w'))|\le 
	\int_{F \cap (v'_+,w'_+)} 
	| f(P_{H(v)}^{-1}(\eta)) - f(P_{H(v')}^{-1}(\eta)) |
	\,\dd \sigma_0(\eta)
	\tag{$\ast$}
	\]
	\[		+ 
	\int_{(((v_+,w_+)\setminus F) \cup  ((v_+,w_+) \triangle (v'_+,w'_+)))\setminus \tilde{S}_+} 
	(| f(P_{H(v)}^{-1}(\eta)) |+ | f(P_{H(v')}^{-1}(\eta))|) 	 
	\,\dd \sigma_0(\eta) .
	\tag{$\ast\ast$}
	\]
	We can now see that both terms are small if $(v',w')$ is close to $(v,w)$.
	
	We set $V=U_0 \cap V_1\cap V_2 $ and $W=W_1\cap W_2$. Let $v'\in V$ and $w'\in W\cap H(v')$, so that we can apply all the bounds found above. In the first integral ($\ast$), the integrand is bounded by $\frac{\varepsilon}{2\sigma_0((v_+,w_+))}$ and the integrating set has measure at most $\sigma_0((v_+,w_+))$ (we suppose $\sigma_0((v_+,w_+))>0$ because if $\sigma_0((v_+,w_+))=0$ the integral is trivially $0$). In ($\ast\ast$), the integrand is bounded by $2C$ and the measure of the integrating set is at most $\sigma_0((v_+,w_+)\setminus F)+ \sigma_0((v_+,w_+)\triangle (v'_+,w'_+))<\varepsilon/4C$. So the result is less than $\varepsilon$.
	
\end{proof}

\section{Reparametrization of the horocyclic flow}\label{Section3}
Next we address the ergodic properties of the horocyclic flow. When one wants to study these properties, it often arises the question of the parametrization of the horocycles: while it is more natural to work with the Riemannian length parametrization, it may be required to use a parametrization more adapted to the dynamics of the flow. In \cite{Burniol21} we proved a result of unique ergodicity of the horocyclic flow using the Margulis parametrization. In this section we will study how invariant measures are transformed by a change of parametrization. This will eventually allow us to conclude that the unique ergodicity also holds for the arc-length parametrization. We start by a general result on reparametrization of flows.

\subsection{A general reparametrization result}

Let $X$ be a topological space. A flow on $X$ is a map 
\begin{equation*}
\begin{matrix}
 X\times \reals & \longrightarrow & X  \\
 (x,t) & \longmapsto & f_t(x).
\end{matrix}
\end{equation*}
satisfying for all $x\in X$ and $t_1,t_2\in  \reals$, $f_0(x)=x$ and $f_{t_1+t_2}(x)=f_{t_1}(f_{t_2}(x))$.
By abuse of notation, we denote the flow by $f_t$. We say that the flow $f_t$ is continuous if the map $(x,t)\mapsto f_t(x)$ is continuous. The orbit of $x\in X$ is the set $ \{f_t(x)\}_{t\in\reals}$. The point $x$ is fixed by the flow $f_t$ if its orbit consists of a single point.

Let $f_t$ be a continuous flow on $X$. A Borel measure $\mu$ on $X$ is said to be invariant by $f_t$ if for every Borel subset $A$ of $X$ we have, for all $t\in  \reals$, $\mu (f_t(A))=\mu(A)$. Let $\mes(f_t)$ denote the set of locally finite Borel measures on $X$ invariant by $f_t$. Let $\prob(f_t)$ be the subset of $\mes(f_t)$ consisting of the probability measures. The flow $f_t$ is \textit{uniquely ergodic} if the set $\prob(f_t)$ consists of a single measure. The next result is due to M. Beboutoff and W. Stepanoff in 1940.

\begin{theorem} \cite{BeboutoffStepanoff} \label{BebStep}
	Let $X$ be a separable metric space. Let $f_t$ be a continuous flow on $X$ without fixed points. Let $g_t$ be another continuous flow on $X$ with the same orbits than $f_t$, i.e. for all $x\in X$, we have $\{g_t(x)\}_{t\in \reals}=\{f_t(x)\}_{t\in \reals} $. Then there is a bijective correspondence $\Phi: \mes(f_t)\to\mes(g_t)$.
\end{theorem}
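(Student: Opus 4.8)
The plan is to construct the bijection $\Phi$ explicitly using a time-change cocycle. Since $f_t$ has no fixed points on the separable metric space $X$, there is a well-defined \emph{change of time} relating the two flows: for each $x\in X$, the orbits of $f_t$ and $g_t$ through $x$ coincide as sets, so there exists a function $\tau=\tau(x,t)$ with $g_t(x)=f_{\tau(x,t)}(x)$. First I would check that, because the flows are continuous and fixed-point free, $\tau$ is well-defined (the parametrization of a single orbit by $f$ is locally injective, so $\tau$ is unique), continuous in $(x,t)$, strictly increasing in $t$, and satisfies the cocycle identity $\tau(x,t_1+t_2)=\tau(g_{t_2}(x),t_1)+\tau(x,t_2)$. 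Dually one gets $\sigma(x,t)$ with $f_t(x)=g_{\sigma(x,t)}(x)$, and $\sigma(x,\cdot)$ is the inverse function of $\tau(x,\cdot)$.

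Next I would encode the infinitesimal time change as a positive continuous function on $X$. Differentiating (in the appropriate weak sense, or working directly with the cocycle), set $\rho(x)=\partial_t\tau(x,t)\big|_{t=0}$, a positive continuous function, so that $\tau(x,t)=\int_0^t \rho(g_s(x))\,\dd s$ and symmetrically $\sigma(x,t)=\int_0^t \rho(f_s(x))^{-1}\,\dd s$. (If one does not want to assume differentiability, one can instead invoke the standard theory of time changes of flows, e.g. via a global cross-section argument or the Ambrose–Kakutani representation, but for the purposes of this paper the cocycle formulation suffices.) The candidate correspondence is then: given $\mu\in\mes(f_t)$, define $\Phi(\mu)$ by $\dd\Phi(\mu)=\rho\,\dd\mu$, i.e. $\Phi(\mu)(A)=\int_A \rho\,\dd\mu$; conversely, given $\nu\in\mes(g_t)$, its preimage is $\rho^{-1}\,\dd\nu$.

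I would then verify the three things that make this work. (1) \textbf{Well-definedness and local finiteness:} since $\rho$ is continuous and strictly positive, $\rho$ and $\rho^{-1}$ are locally bounded, so multiplying a locally finite measure by either stays locally finite, and the two operations are mutually inverse. (2) \textbf{Invariance:} the heart of the matter. For $\mu$ invariant under $f_t$, one shows $\rho\,\dd\mu$ is invariant under $g_t$ by a change-of-variables computation: for a Borel set $A$ and the map $g_t$, one writes $\int \mathbf{1}_A\circ g_t\cdot \rho\,\dd\mu$ and uses $g_t(x)=f_{\tau(x,t)}(x)$ together with the flow-box / local product structure to reduce to the one-dimensional statement that pushing forward $\rho(x)\,\dd x$ under the map $x\mapsto$ (the $f$-time of $g_t(x)$) is again $\rho(x)\,\dd x$ on each orbit — which is exactly the statement that $\sigma(x,\cdot)$ transports $\rho^{-1}\dd t$ to $\dd t$, i.e. the chain rule for the cocycle. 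This is cleanest when phrased via disintegration of $\mu$ along $f_t$-orbits on a cross-section, using that locally $f_t$ has a flow-box neighborhood $N\times(-\epsilon,\epsilon)$ (valid since $X$ is a separable metric space with a fixed-point free continuous flow). (3) \textbf{Bijectivity:} immediate from (1) and the symmetric formula, once invariance is checked in both directions.

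The main obstacle is step (2), the invariance, and more precisely making rigorous the passage from "invariance along each orbit" to "invariance of the global measure" without extra regularity hypotheses on $X$ or $\mu$ beyond local finiteness. The right tool is a measurable flow-box / cross-section decomposition: cover $X$ by countably many flow boxes for $f_t$, disintegrate $\mu$ in each box as (a measure on the cross-section) $\times$ (Lebesgue on the interval), and observe the time change acts only in the interval direction where the computation is the elementary substitution rule. One must be careful that $\tau$ is genuinely continuous — this uses fixed-point freeness in an essential way, since otherwise the implicit equation $g_t(x)=f_s(x)$ would not determine $s$ — and that $\rho$ is bounded away from $0$ and $\infty$ on compact sets, which follows from continuity and positivity. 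Granting these standard facts, the correspondence $\Phi(\mu)=\rho\,\dd\mu$ does the job, and in particular restricts to a bijection $\prob(f_t)\to\prob(g_t)$ after normalization when the total masses are finite, which is the form we will use to transfer unique ergodicity from the Margulis flow to the Lebesgue flow.
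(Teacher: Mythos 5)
There is a genuine gap, and it sits exactly at the step you flag as the heart of the matter. Your correspondence $\dd\Phi(\mu)=\rho\,\dd\mu$ presupposes that the time change $\tau(x,\cdot)$ is differentiable (or at least absolutely continuous) in $t$, so that a density $\rho$ exists. For merely \emph{continuous} flows with the same orbits this fails in general: $\tau(x,\cdot)$ is only a continuous, strictly monotone function, and it can be singular (derivative $0$ Lebesgue-a.e.), in which case Lebesgue measure in $f$-time and Lebesgue measure in $g$-time on a single orbit are mutually \emph{singular}, no density $\rho$ exists, and $\mu$ and the correct image measure $\Phi(\mu)$ need not be absolutely continuous with respect to each other. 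Your parenthetical escape clause ("for the purposes of this paper the cocycle formulation suffices") is precisely where the argument breaks: in this paper the two flows are the Lebesgue and Margulis parametrizations of the horocycles, and the change of time is $s\mapsto \mu_{H(v)}\bigl((v,\tilde{h}^L_s(v))\bigr)$, where $\mu_{H(v)}$ is the Patterson--Sullivan-type measure on the horocycle. Outside constant curvature this measure is in general \emph{not} absolutely continuous with respect to arc length, so the function whose derivative you call $\rho$ is typically a singular increasing function; your formula then produces the wrong measure (or none at all), exactly in the application the theorem is quoted for. Claims such as "$\rho$ is positive continuous" and "bounded away from $0$ and $\infty$ on compact sets" have no basis in the hypotheses.

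The correct route --- the one Beboutoff and Stepanoff take, and the one the paper implements in Section 3.2 --- dispenses with any density. One uses local (or, here, global) cross-sections for the fixed-point-free flow, disintegrates an invariant measure $\mu\in\mes(f_t)$ as (transverse measure on the section) $\times$ (Lebesgue in $f$-time along orbits), observes that the transverse measure is intrinsic to the orbit foliation and hence independent of the parametrization, and defines $\Phi(\mu)$ by recombining the \emph{same} transverse measure with Lebesgue in $g$-time, as in Equation (\ref{muprime}) with the sections $\tilde{W}^{ws}(v)$. Invariance and bijectivity are then immediate by symmetry, with no absolute continuity between $\mu$ and $\Phi(\mu)$ ever asserted. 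You mention this flow-box disintegration as a tool, but only to prove invariance of $\rho\,\dd\mu$; to repair the proof you should promote it from a verification device to the \emph{definition} of $\Phi$, and drop the cocycle-derivative formula altogether.
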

The correspondence $\Phi$ will be described in the next section in the setting of horocyclic flows.
If $X$ is a compact space, since every locally finite measure is in fact finite, the map $\Phi$ given in the theorem is an isomorphism between the spaces of finite Borel invariant measures of $f_t$ and $g_t$. This induces a bijection between $\prob(f_t)$ and $\prob(g_t)$. In particular, $f_t$ is uniquely ergodic if and only if $g_t$ is uniquely ergodic.

However, in a general separable metric space, we think that there is no reason why the map of the theorem would send finite measures to finite measures, and infinite measures to infinite measures.

\subsection{Reparametrization of the horocyclic flow on a compact surface}

In this section, we apply the ideas of Beboutoff and Stepanoff \cite{BeboutoffStepanoff} to establish what happens to a measure invariant by a horocyclic flow under a change of parametrization. We specify the bijection $\Phi$ for a horocyclic flow on a compact surface.

In what follows, let $M$ denote an oriented compact connected rank $1$ surface with nonpositive curvature. Such a manifold $M$ is complete, has a rank $1$ closed geodesic, and the covering transformations group $\Gamma$ is non-elementary and divergent \cite[Theorem 4.3]{Knieper98}. Moreover, the Bowen-Margulis measure we defined in Section \ref{Section1} is finite.

By \textit{horocyclic flow} we mean a continuous flow $h_s$ on $T^1M$ whose orbits are the unstable horocycles, i.e $$\forall v\in T^1M,\,\{h_s(v)\}_{s\in\reals}=H(v).$$ 
\begin{definition}
	The Lebesgue horocyclic flow $h^L_s$ is given by the arc length of the horocycles: for any $v\in T^1 M$ and $s\in \reals$, the vector $h^L_s(v)$ is the vector of $H^u(v)$ that we get by travelling a distance $|s|$ on $H^u(v)$ from $v$ in the positive or negative direction accordingly to the orientation depending on the sign of $s$. We will say that the horocyclic flow  $h^L_s$ has the \textit{Lebesgue parametrization}.
\end{definition}

A horocyclic flow $h_s$ on $T^1 M$ is lifted to a horocyclic flow $\tilde{h}_s$ on the unit tangent bundle  $T^1 \tilde{M}$ of the universal cover satisfying $\tilde{h}_s(\gamma v)=\gamma \tilde{h}_s(v)$ for every $s\in \reals,\, v\in \TMu,\,\gamma \in \Gamma$. Conversely, a horocyclic flow $\htil_s$ on $T^1\tilde{M}$ with the property $\tilde{h}_s(\gamma v)=\gamma \tilde{h}_s(v)$ passes to the quotient $T^1 M$ giving a horocyclic flow $h_s$. Moreover, ${h}_s$-invariant measures on $T^1M$ are in correspondence with $\tilde{h}_s$-invariant measures on $T^1\tilde{M}$ which in addition are $\Gamma$-invariant.

The \textit{weak unstable manifold} $\tilde{W}^{wu}(v)$ of a vector $v\in T^1\tilde M$ is the union of all horocycles along the geodesic generated by $v$, 
$$\tilde{W}^{wu}(v)=\cup_{t\in \reals}H(g_t v)= \{w\in T^1 \tilde M \mid w_-=v_-\},$$ 
and has dimension $2$. In the universal cover, the \textit{weak stable manifold} $\tilde{W}^{ws}(v)=-\tilde{W}^{wu}(-v)$ of a vector $v\in \TMu$ is a section on the flow in the sense of \cite{BeboutoffStepanoff} in many cases, as explained in the next lemma.

\begin{lemma}\label{section}
	Let $v\in \TMu$. Assume that the weak stable manifold $\tilde{W}^{ws}(v)$ of $v$ has no rank $2$ vectors. Then for every $w\in \TMu\setminus \tilde{W}^{wu}(-v)=\{u\in \TMu\,|\,u_-\neq v_+\}$, there exists a unique time $s\in \reals$ such that $\tilde{h}_s(w)\in \tilde{W}^{ws}(v)$.
\end{lemma}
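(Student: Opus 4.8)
\medskip
\noindent\textit{Proof strategy.}
The plan is to reduce the lemma to the existence and uniqueness of a geodesic of $\tilde{M}$ joining $w_-$ to $v_+$. Unwinding the definitions, $\tilde{W}^{ws}(v)=-\tilde{W}^{wu}(-v)=\{u\in\TMu : u_+=v_+\}$, the set $\TMu\setminus\tilde{W}^{wu}(-v)$ is $\{u : u_-\neq v_+\}$, and the $\tilde{h}_s$-orbit of $w$ is the horocycle $H(w)$, all of whose vectors have negative endpoint $w_-$. Since $H(w)$ is diffeomorphic to $\reals$, the flow $\tilde{h}_s$ has no fixed or periodic points on it, so $s\mapsto\tilde{h}_s(w)$ is injective; hence the lemma is equivalent to the claim that there is exactly one $u\in H(w)$ with $u_+=v_+$. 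Such vectors are in bijection with the geodesics $c$ of $\tilde{M}$ satisfying $c(-\infty)=w_-$ and $c(+\infty)=v_+$: for such a $c$ the function $\beta_{w_-}(\pi(w),\cdot)$ is affine and strictly monotone along $c$, so $c$ meets the horosphere $\pi(H(w))=\{\,y : \beta_{w_-}(\pi(w),y)=0\,\}$ at exactly one point, where the outgoing unit vector of $c$ is the unique vector of $H(w)$ lying on the image of $c$; conversely every $u\in H(w)$ with $u_+=v_+$ arises this way from $\gamma_u$. So it suffices to show that there is exactly one geodesic from $w_-$ to $v_+$.

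For uniqueness, suppose $c_1$ and $c_2$ were two such geodesics with distinct images (if the images coincide they yield the same vector $u$). They are distinct bi-asymptotic geodesics, hence bound a flat strip; then $c_1'(0)$ is a rank $2$ vector with positive endpoint $v_+$, that is $c_1'(0)\in\tilde{W}^{ws}(v)$, contradicting the hypothesis that $\tilde{W}^{ws}(v)$ contains no rank $2$ vector.

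For existence, I would show that $V:=\{\eta\in\borderM\setminus\{v_+\} : (\eta,v_+)\in E(\tilde{M})\}$ equals $\borderM\setminus\{v_+\}$; since $w_-\neq v_+$, this then yields a geodesic from $w_-$ to $v_+$. By the hypothesis every geodesic with positive endpoint $v_+$ is rank $1$, so $V=\{\eta : (\eta,v_+)\in E_1(\tilde{M})\}$; as $E_1(\tilde{M})$ is open, $V$ is open in $\borderM\setminus\{v_+\}$, and $V\neq\emptyset$ because $v_-\in V$. The main obstacle is to prove that $V$ is closed. Given $\eta_n\in V$ with $\eta_n\to\eta\neq v_+$, take rank $1$ geodesics $c_n$ from $\eta_n$ to $v_+$; using the compactness of $M$, translate each $c_n$ by an element of $\Gamma$ so that it passes through a fixed compact subset of $\tilde{M}$, extract a subsequence of the associated unit vectors converging to some $u\in\TMu$, and deduce that $\eta$ and $v_+$ are joined by a geodesic. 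The delicate part is ruling out the degenerate case where the translating elements escape to infinity and make $\eta$ collapse onto $v_+$; this uses that the $c_n$ are rank $1$, hence uniformly contracting near the limit vector $u$ in the sense of \cite{ballmannlectures}, together with $\eta\neq v_+$. (Alternatively one can quote directly the connectability properties of endpoints of rank $1$ geodesics from \cite{ballmannlectures,Burniol21}.) Finally, since $\dim M=2$ the boundary $\borderM$ is a circle and $\borderM\setminus\{v_+\}$ is connected, so the nonempty open closed set $V$ must be all of it; hence $V=\borderM\setminus\{v_+\}$ and the lemma follows.
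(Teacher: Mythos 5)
Your reduction of the lemma to ``there is exactly one geodesic of $\tilde M$ from $w_-$ to $v_+$'' is fine, and your uniqueness argument (two distinct bi-asymptotic geodesics bound a flat strip, hence produce a rank $2$ vector on $\tilde{W}^{ws}(v)$) is exactly the paper's. The problem is the existence half. The paper disposes of it in one line: a nonflat compact nonpositively curved surface satisfies the visibility axiom (Eberlein), so any two \emph{distinct} boundary points, in particular $w_-$ and $v_+$, are joined by a geodesic. You instead attempt an open--closed argument on the circle $\borderM\setminus\{v_+\}$, and the closedness step, which you yourself flag as the main obstacle, is genuinely gapped. Translating the geodesics $c_n$ by elements $\gamma_n\in\Gamma$ to bring them back to a fixed compact set destroys the very data you need: $\gamma_n c_n$ joins $\gamma_n\eta_n$ to $\gamma_n v_+$, and if the $\gamma_n$ leave every finite subset of $\Gamma$ the limit vector $u$ tells you nothing about a geodesic from $\eta$ to $v_+$. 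What one actually needs is that the untranslated geodesics $c_n$ meet a fixed compact subset of $\tilde M$; that is precisely the failure mode that visibility excludes, and nothing in your sketch rules it out. The appeal to the $c_n$ being ``rank $1$, hence uniformly contracting'' does not work: rank $1$ is not a uniform property along a sequence of geodesics in nonpositive curvature (the $c_n$ may spend longer and longer times in nearly flat regions), so no uniform hyperbolic estimate is available. The fallback of quoting the connectability lemma for rank $1$ geodesics from Ballmann is also insufficient, since that statement is local (it provides neighborhoods $U$ of $c(-\infty)$ and $V$ of $c(+\infty)$ of a \emph{given} rank $1$ geodesic $c$ whose points can be joined); it is what gives you openness of your set $V$, not its closedness.

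The clean repair is simply to invoke the visibility property: since $M$ is compact, nonpositively curved and nonflat (it is rank $1$), $\tilde M$ is a visibility manifold by \cite[Proposition 2.5]{Eberlein79}, so $w_-\neq v_+$ already yields a geodesic from $w_-$ to $v_+$, and your argument that this geodesic crosses the horosphere $\pi(H(w))$ exactly once then produces the required $s$. Note that the existence step does not even need the hypothesis on $\tilde{W}^{ws}(v)$; only uniqueness does, exactly as in the paper.
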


\begin{proof}
	It is known that a nonflat compact surface $M$ with nonpostive curvature satisfies the visibility axiom, which means that any two distinct points of the boundary $\partial \tilde M$ can be joined by a geodesic on $\tilde M$ \cite[Proposition 2.5]{Eberlein79}. If $v$ and $w$ are as in the statement, the points $v_+$ and $w_-\in \partial \tilde M$ are distinct, so there is at least a geodesic between them. Hence, there exists a vector $u$ in the unstable horocycle $H(w)$ of $w$ pointing to $v_+$. This vector can be written as $u=\tilde{h}_s(w)$ for some $s\in \reals$ and is in $\tilde{W}^{ws}(v)$.
	
	To prove the uniqueness, let us suppose that for some different reals $s$ and $s'$, $\tilde{h}_s(w)$ and $\tilde{h}_{s'}(w)$ are in the weak stable manifold $\tilde{W}^{ws}(v)$. Then the vectors $\tilde{h}_s(w)$ and $\tilde{h}_{s'}(w)$ are asymptotic both for positive and negative time, so the corresponding geodesics bound a flat strip. This would imply that these vectors have rank $2$, which contradicts the hypothesis that $\tilde{W}^{ws}(v)$ has no such vectors.
\end{proof}

The condition that $\tilde{W}^{ws}(v)$ has no rank $2$ vectors is equivalent to the fact that $v_+$ is not in the set $\tilde{S}_+$ of endpoints of vectors of rank $2$. We know that $\tilde{S}_+$ has zero $\sigma_0$-measure, so its complement must be dense in $\partial \tilde M$. This ensures that there are enough sections for the horocyclic flow.

We recall how an invariant measure is locally disintegrated.
Let $\mu\in \mes(\htil_s)$ be an invariant measure. Given a Borel subset $A$ of a section $\tilde{W}^{ws}(v)$, we consider the function $\phi_A:[0,1]\to \reals^+\cup \{+\infty\}$ defined by $$\phi_A(s)=\mu(\htil_{[0,s]}(A)).$$
Let us assume that $\phi_A(1)$ is finite. Then we have, for any integer number $n\ge 1$,
$$
	\phi_A(0)=\mu(A)\le \mu(\tilde{h}_{[0,1/n)}(A))=\frac{1}{n}\sum_{k=0}^{n-1} \mu(\tilde{h}_{k/n}(\tilde{h}_{[0,1/n)}(A)))=\frac{1}{n}\mu(\tilde{h}_{[0,1)}(A))\le \frac{\phi_A(1)}{n}, 
$$
hence $\phi_A(0)=0$. 
Moreover, for every two nonnegative numbers $s,t$ such that $s+t\le 1$, we have $$\phi_A(s+t)=\phi_A(s) +\mu(h_s(h_{(0,t]}(A)))=\phi_A(s)+\phi_A(t)-\phi_A(0)=\phi_A(s)+\phi_A(t)$$ thanks to the invariance of $\mu$. Since $\phi_A$ is monotonic, we deduce that it is linear, so there is a constant $l_A\ge0$ such that $\phi_A(t)=l_A t$ for all $t\in [0,1]$. 

We now define a measure $\mu_{\tilde{W}^{ws}(v)}$ on $\tilde{W}^{ws}(v)$ which associates the value $l_A=\phi_A(1)$ to the set $A$ if $\phi_A(1)$ is finite, and the value $\infty$ otherwise. It is not difficult to check that $\mu_{\tilde{W}^{ws}(v)}$ is a Borel locally finite measure and it is the same for two vectors on the same weak stable leaf. Furthermore, the measure $\mu$ is the product of the Lebesgue measure on each horocycle by the measure $\mu_{\tilde{W}^{ws}(v)}$ (Figure \ref{fig:picC}): for every Borel subset $E\subseteq \TMu\setminus \tilde{W}^{wu}(-v)$ we have
$$
	\mu(E)=\int_{\tilde{W}^{ws}(v)}\int_\reals \mathbf{1}_E(\tilde{h}_s(u)) \, \dd s \,\dd \mu_{\tilde{W}^{ws}(v)}(u).
$$

\begin{figure}[h]
	\centering
	\includegraphics{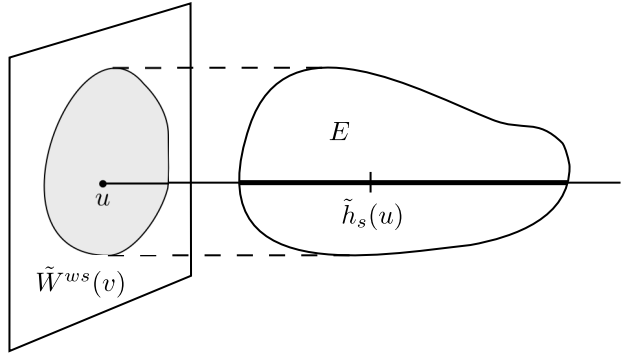}
	\caption{Decomposition of $\mu$.}
	\label{fig:picC}
\end{figure}

Let $h'_s$ be another horocyclic flow of $T^1M$. The two horocyclic flows $\htil_s$ and $\htil'_s$ on $T^1\tilde{M}$ are related by a change of time, given $v\in \TMu$ and $s'\in\reals$, there exists a unique $s=s(s',v)\in\reals$ such that $\htil_s(v)=\htil'_{s'}(v)$. In fact, $s$ as a function $\reals \times \TMu \to \reals$ satisfies:
\begin{enumerate}[label={(\roman*)}]
	\item For all $v\in T^1\tilde M$ and $s'\in\reals$, we have $\htil'_{s'}(v)=\htil_{s(s',v)}(v)$.
	\item $s$ is continuous.
	\item For all $v\in\TMu,\, s(\,\cdot \,,v):\reals \to \reals$ is strictly monotonic.
	\item For all $v\in T^1\tilde M$, $s(0,v)=0$.
	\item For all $v\in T^1\tilde M$ and $s_1',s_2'\in\reals$, $s(s'_1+s'_2,v)=s(s'_1,v)+ s(s'_2,\htil_{s(s'_1,v)}(v))$.
	\item For all $v\in T^1\tilde M$, $s'\in\reals$ and $\gamma\in\Gamma$, $s(s',\gamma v)=s(s',v)$
\end{enumerate}

The converse is also true: given a function $s$ with the properties (ii)-(vi) we can define a new horocyclic flow by $\htil'_{s'}(v) :=\htil_{s(s',v)}(v)$ which passes to the quotient $T^1M$.

%

As stated in Theorem \ref{BebStep}, there is a correspondence between the measures invariant by the two flows.

\begin{definition}
	Given two horocyclic flows $\tilde h_s$ and $\tilde h'_s$ of $T^1 \tilde M$, we define a map $\Phi:\mes(\tilde h_s)\to \mes (\tilde h'_s)$ putting, for every weak stable manifold $\tilde W^{ws}(v) $ containing no rank $2$ vectors and any Borel subset $E$ of $T^1\tilde M\setminus \tilde W^{wu}(-v)$,
	\begin{equation}\label{muprime}
		(\Phi(\mu))(E)=\int_{\tilde{W}^{ws}(v)}\int_{\reals} \mathbf{1}_E(\tilde{h}'_s(u)) \, \dd s \,\dd \mu_{\tilde{W}^{ws}(v)}(u).
	\end{equation}
\end{definition}

By \cite{BeboutoffStepanoff}, the measure $\mu' =\Phi(\mu)$ is well defined and $\tilde h'_s$-invariant. 
If the measure $\mu$ is in addition $\Gamma$-invariant, then so is the measure $\mu'$. We obtain, after normalization of measures, a correspondence between the sets $\prob({h}_s)$ and $\prob(h'_s)$, because $T^1M$ is compact. We also remark that the measures $\mu_{\tilde{W}^{ws}(v)}$ are independent of the parametrization of the horocyclic flow.

\subsection{Horocyclic flows on a subset of $T^1M$}

Let $\Sigma$ be Borel subset of $\TM$ which is a union of horocycles and let $\tilde{\Sigma}$ be its lift to $\TMu$.  
We want to study the horocyclic flows on $\Sigma$, namely, the continuous flows defined on $\Sigma$ whose orbits are horocycles. It is clear that a horocyclic flow $h_s$ on $\TM$ is restricted to a horocyclic flow $h_s|_\Sigma$ on $\Sigma$, and that a measure $\mu \in \mes(h_s)$ can be restricted to a measure $\mu|_\Sigma\in \mes(h_s|_\Sigma)$. In many situations, a certain parametrization of the horocyclic flow is just defined on a subset $\Sigma$ and we would like to deduce ergodic properties for the flow on the whole space from this specific parametrization.

The set $\Sigma$ does not have to be compact, or even locally compact. Then, if we have two horocyclic flows $h_s$ and $h'_s$ on $\Sigma$, we still have a bijection between $\mes(h_s)$ and $\mes(h'_s)$ by Theorem \ref{BebStep}, but we have no information about the subsets $\prob(h_s)\subset \mes(h_s)$ and $\prob(h'_s)\subset \mes(h'_s)$ or the relation between them.


We can restrict the sections of the horocyclic foliation that we found in the previous section to $\tilde \Sigma$ and disintegrate invariant measures with respect to them. More precisely, if the set $\tilde{W}^{ws}(v)\cap \tilde{\Sigma}$ is nonempty and $\tilde{W}^{ws}(v)$ has no rank $2$ vectors, then $\tilde{W}^{ws}(v)\cap \tilde{\Sigma}$ is a section for the flow $\htil_s$ and we can define a measure $\mu_{\tilde{W}^{ws}(v)\cap \tilde{\Sigma}}$ on this set from a $\tilde h_s$-invariant measure $\mu$ on $\tilde \Sigma$. 

Another horocyclic flow $\htil'_{s'}$ on $\tilde{\Sigma}$ is related to $\htil_s$ by a change of time $s=s(s',v)$ as before. The $\htil'_s$-invariant measure $\mu'= \Phi(\mu)$ associated to $\mu\in \mes(\htil_s)$ has a local expression given by Equation (\ref{muprime}) as a product of the Lebesgue measures on horocycles by $\mu_{\tilde{W}^{ws}(v)\cap \tilde{\Sigma}}$.

\subsection{The Margulis parametrization}

Some of the most relevant properties of the horocyclic flow are deduced thanks to the Margulis parametrization, which allows to apply the usual techniques of ergodic theory.  The situation in nonpositive curvature is different from negative curvature, because the Margulis parametrization can only be defined on a certain subset $\Sigma_0$ of $T^1M$.
As we mentioned, our result \cite{Burniol21} establishes the unique ergodicity of the horocyclic flow in restriction to the subset $\Sigma_0$, when $M$ is a nonpositively curved rank $1$ compact surface. Here we will study the relation between the Margulis parametrization $h_s^M$ on $\Sigma$ and the Lebesgue parametrization $h_s^L$ on $T^1M$. We will see that the map $\Phi$ induces a bijection between $\prob(h^L_s|_\Sigma)$ and $\prob(h^M_s)$, which is not trivial at all because no assumptions of compactness are made on $\Sigma$.

As we have seen in Section \ref{Section1}, the set of horocycles $\tH$ of $\TMu$ admits a family of measures $\{\mu_H\}_{H\in\tH}$, which is exponentially expanded by the geodesic flow, $$\mu_{g_tH}=e^{\delta t}g_{t*}\mu_{H}.$$ To define the Margulis parametrization, we parametrize each horocycle $H$ by the measure $\mu_H$. Let us explore some further properties of these measures. 

\begin{lemma}\label{point mass}
	The measures $\mu_H$ are locally finite and have no point masses.
\end{lemma}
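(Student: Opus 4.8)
The plan is to derive both statements directly from the simplified expression for $\mu_H$ in Corollary~\ref{simplification_measure}, combined with the continuity of the auxiliary function appearing in the proof of Proposition~\ref{continuity}. Set
\[
f:\TMu\longrightarrow\reals,\qquad f(u)=\exp\bigl(\delta\,\beta_{u_+}(0,\pi(u))\bigr),
\]
which is continuous because $u\mapsto u_+$, $u\mapsto\pi(u)$ and the Busemann cocycle $(\xi,x,y)\mapsto\beta_\xi(x,y)$ are all continuous. Recall that for $\eta\in P_H(H\cap\tilde{R}_1)$ the fibre $P_H^{-1}(\eta)$ is a single rank~$1$ vector and $\phi_H(\eta)=f(P_H^{-1}(\eta))$, so Corollary~\ref{simplification_measure} reads, for every Borel $A\subset H$,
\[
\mu_H(A)=\int_{P_H(H\cap\tilde{R}_1)}\mathbf{1}_A\bigl(P_H^{-1}(\eta)\bigr)\,f\bigl(P_H^{-1}(\eta)\bigr)\,\dd\sigma_0(\eta).
\]
We also use that $\sigma_0$ is a finite Borel measure on the compact space $\borderM$ with no atoms \cite[Proposition 5]{LinkPicaud}, and that $P_H$ is injective on $H\cap\tilde{R}_1$: two vectors of $H$ with the same positive endpoint are bi-asymptotic, hence bound a flat strip, contradicting rank~$1$.

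\textbf{No point masses.} Fix $v\in H$. In the integral above, the integrand $\mathbf{1}_{\{v\}}(P_H^{-1}(\eta))\,f(P_H^{-1}(\eta))$ is nonzero only when $P_H^{-1}(\eta)=v$; by the injectivity of $P_H$ on $H\cap\tilde{R}_1$ this can occur only for the single value $\eta=v_+$, and only if $v\in H\cap\tilde{R}_1$. Hence
\[
\mu_H(\{v\})\le f(v)\,\sigma_0(\{v_+\})=0
\]
since $\sigma_0$ has no atoms. Thus $\mu_H$ has no point masses.

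\textbf{Local finiteness.} Fix $v\in H$. As $H$ is a $C^1$-embedded submanifold of $\TMu$ diffeomorphic to $\reals$, choose a bounded open interval $(v',v'')$ of $H$ with $v\in(v',v'')$ whose closure $[v',v'']$ is compact; then $C:=\sup_{[v',v'']}f<\infty$. For $\eta$ in the domain of integration the integrand $\mathbf{1}_{(v',v'')}(P_H^{-1}(\eta))\,f(P_H^{-1}(\eta))$ is at most $C$, being either $0$ or equal to $f(P_H^{-1}(\eta))$ with $P_H^{-1}(\eta)\in[v',v'']$. Therefore
\[
\mu_H\bigl((v',v'')\bigr)\le C\,\sigma_0(\borderM)<\infty,
\]
so $(v',v'')$ is a neighbourhood of $v$ in $H$ of finite $\mu_H$-measure, and $\mu_H$ is locally finite.

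The statement is essentially bookkeeping on top of Corollary~\ref{simplification_measure}; the external inputs are only the finiteness and atomlessness of the Patterson--Sullivan measure $\sigma_0$. The one point deserving care is the collapse of the volume factor $\vol_{\eta,H}$ to an indicator together with the identity $\phi_H=f\circ P_H^{-1}$ on $P_H(H\cap\tilde{R}_1)$ — exactly what Corollary~\ref{simplification_measure} and the proof of Proposition~\ref{continuity} provide — and, for local finiteness, the fact that every point of $H$ has a relatively compact interval neighbourhood on which the continuous function $f$ is bounded.
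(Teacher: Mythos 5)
Your proof is correct and follows essentially the same route as the paper: the paper likewise writes $\dd\mu_H(v)=e^{\delta\beta_{v_+}(0,\pi(v))}\,\dd\sigma_0(v_+)$ (the content of Corollary~\ref{simplification_measure}) and concludes local finiteness from the finiteness of $\sigma_0$ together with boundedness of the density on bounded sets, and absence of point masses from the atomlessness of $\sigma_0$. Your write-up merely spells out the bookkeeping (the single fibre $\eta=v_+$ and the compact interval neighbourhood) that the paper leaves implicit.
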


\begin{proof}
	The measure $\mu_H$ is obtained from the Patterson-Sullivan measure $\sigma_0$ on $\partial \tilde M$ as
	$$
	\dd\mu_H (v)= e^{\delta \beta_{v_+}(0, \pi(v) ) } \dd\sigma_0(v_+).
	$$
	Since $\sigma_0$ is finite and this factor is bounded on bounded sets, $\mu_H$ is locally finite. We also know that $\sigma_0$ has no point masses, so neither does $\mu_H$.
\end{proof}

The orientation of $M$ induces an orientation on each horocycle $H$, so there are well defined positive and negative directions. A vector $v\in H$ divides the horocycle into two infinite intervals, we write $H_R(v)$ for the one in the positive direction and $H_L(v)$ for the other.  Next, we give some conditions on a subset of $\Sigma $ that allow us to define the Margulis parametrization.

\begin{proposition}
	\label{horocyclic_flow} Let $M$ be a compact oriented nonpositively curved rank $1$ surface.
	Let $\tilde{\Sigma}$ be a $\Gamma$-invariant Borel subset of $\TMu$ which is a union of horocycles. We assume that for every horocycle $H\subset{\tilde{\Sigma}}$,
	\begin{enumerate}[label={(\roman*)}]
		\item the measure $\mu_H$ is of full support in $H$,
		\item for one (hence for all) vector $v\in H$, the half horocycles $H_R(v)$ and $H_L(v)$ have infinite measure.
	\end{enumerate}
	Then there exists a horocyclic flow $\tilde{h}_s^M$ on $\tilde{\Sigma}$ such that for all $v\in\tilde{\Sigma}$ and $s\in\reals$, we have
	$$
	\mu_{H(v)}((v,\tilde{h}_s^M(v)))=|s|.
	$$
	Moreover, the flow $\tilde{h}_s^M$ satisfies, for every Borel subset $A$ of the horocycle $H(v)$,
	$$
	\mu_{H(v)}(A)=\Leb(\{s\in \reals \, | \,\tilde{h}_s^M(v)\in A\}).
	$$
\end{proposition}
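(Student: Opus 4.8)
The plan is to parametrize every horocycle in $\tilde{\Sigma}$ by the measure $\mu_H$, exactly as arc length parametrizes it by Riemannian length, and then to obtain continuity by combining Proposition \ref{continuity} with the continuity of the horocyclic foliation. Fix $v\in\tilde{\Sigma}$; since $\tilde{\Sigma}$ is a union of horocycles, $H(v)\subset\tilde{\Sigma}$. Using the orientation of $H(v)$, define $\tau_v\colon H(v)\to\reals$ by $\tau_v(w)=\mu_{H(v)}((v,w))$ for $w\in H_R(v)$, $\tau_v(v)=0$, and $\tau_v(w)=-\mu_{H(v)}((v,w))$ for $w\in H_L(v)$. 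Hypothesis (i) makes $\mu_{H(v)}$ positive on every nonempty open subinterval, so $\tau_v$ is strictly increasing; Lemma \ref{point mass} gives that $\mu_{H(v)}$ has no atoms, so $\tau_v$ is continuous; and hypothesis (ii) gives that $H_R(v)$ and $H_L(v)$ have infinite measure, so $\tau_v$ maps onto $\reals$. Hence $\tau_v$ is an orientation-preserving homeomorphism $H(v)\to\reals$, and I set $\tilde{h}_s^M(v):=\tau_v^{-1}(s)\in H(v)\subset\tilde{\Sigma}$. Additivity of $\mu_{H(v)}$ on intervals yields the cocycle relation $\tau_u(w)=\tau_v(w)-\tau_v(u)$ for all $u,w\in H(v)$, from which $\tilde{h}_0^M(v)=v$ and $\tilde{h}_s^M\circ\tilde{h}_t^M=\tilde{h}_{s+t}^M$ follow at once, and the orbit of $v$ is $\tau_v^{-1}(\reals)=H(v)$, so the orbits are the horocycles. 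Because each $\gamma\in\Gamma$ is an orientation-preserving isometry of $\tilde{M}$ it carries $H_R(v)$ onto $H_R(\gamma v)$, and together with $\gamma_*\mu_{H(v)}=\mu_{H(\gamma v)}$ this gives $\tau_{\gamma v}\circ\gamma=\tau_v$, hence $\tilde{h}_s^M(\gamma v)=\gamma\,\tilde{h}_s^M(v)$. The two displayed identities then follow: $\mu_{H(v)}((v,\tilde{h}_s^M(v)))=|\tau_v(\tilde{h}_s^M(v))|=|s|$, and $\tau_v$ pushes $\mu_{H(v)}$ forward to Lebesgue measure on $\reals$ (they coincide on every interval by the cocycle relation), so applying $\tau_v^{-1}$ gives $\mu_{H(v)}(A)=\Leb(\{s\in\reals\mid\tilde{h}_s^M(v)\in A\})$.

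It remains to prove that $(v,s)\mapsto\tilde{h}_s^M(v)$ is continuous on $\tilde{\Sigma}\times\reals$, and this is where Proposition \ref{continuity} and the foliation structure enter. I would use the arc-length parametrization $\tilde{h}_x^L$ of the horocycles as an auxiliary flow: it is a continuous flow on $\TMu$ with the same orbits, its continuity being a consequence of the fact recalled in Section \ref{Section2} that $\tH$ is a continuous foliation whose $C^1$ leaves vary continuously in the $C^1$ topology. Set $\Psi(x,v)=\mu_{H(v)}((v,\tilde{h}_x^L(v)))$ for $(x,v)\in\reals\times\tilde{\Sigma}$; this is continuous, being the composition of the continuous map $(x,v)\mapsto(v,\tilde{h}_x^L(v))$, which takes values in the set of pairs on a common horocycle, with the continuous map of Proposition \ref{continuity}. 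Now $G(x,v):=\tau_v(\tilde{h}_x^L(v))$ equals $\Psi(x,v)$ for $x\ge0$ and $-\Psi(x,v)$ for $x\le0$, so (using $\Psi(0,v)=0$) it is continuous on $\reals\times\tilde{\Sigma}$, while for each fixed $v$ it is a strictly increasing homeomorphism of $\reals$, being the composition of the arc-length parametrization with $\tau_v$. A routine argument then shows that its inverse in the first variable $\ell(s,v):=G(\cdot,v)^{-1}(s)$ is jointly continuous: at $(s_0,v_0)$ with $r_0=\ell(s_0,v_0)$ and $\varepsilon>0$, the strict inequalities $G(r_0-\varepsilon,v_0)<s_0<G(r_0+\varepsilon,v_0)$ persist, with $s_0$ replaced by $s$, for all $(s,v)$ near $(s_0,v_0)$, which forces $|\ell(s,v)-r_0|<\varepsilon$. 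Since $\tilde{h}_s^M(v)=\tilde{h}_{\ell(s,v)}^L(v)$, it is a composition of continuous maps, so $\tilde{h}_s^M$ is a continuous flow on $\tilde{\Sigma}$.

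I expect this continuity step to be the main obstacle. Proposition \ref{continuity} only controls $\mu_H((v,w))$ for $v$ and $w$ on one and the same horocycle $H$, so one cannot squeeze $\tilde{h}_s^M(v')$ directly between two fixed reference points of $H(v)$ as $v'$ ranges over nearby horocycles; the continuity of the arc-length flow --- equivalently, the continuously varying $C^1$-structure of the foliation --- is precisely what lets one transport the reference points across leaves, and arranging this bookkeeping cleanly is the delicate part.
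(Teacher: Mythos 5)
Your proposal is correct and follows essentially the paper's own construction: define the signed cumulative measure $\tau_v$ (the paper's $m_v$), show it is a homeomorphism $H(v)\to\reals$ using full support, absence of atoms (Lemma \ref{point mass}) and the infinite measure of half-horocycles, invert it to define $\tilde h^M_s$, and deduce the two displayed identities from the cocycle relation. The only real divergence is the continuity step: you build the change-of-time function $G(x,v)=\tau_v(\tilde h^L_x(v))$, prove it jointly continuous via Proposition \ref{continuity} and the continuity of the arc-length flow, and invert it fiberwise by a monotonicity argument, whereas the paper argues sequentially by contradiction (choosing $w_k\in H(v_k)$ converging to $\tilde h^M_s(v)$, showing the measures of the gap intervals tend to $0$, and contradicting full support by inserting a fixed arc-length displacement $\tilde h^L_\varepsilon$); both arguments use exactly the same ingredients (Proposition \ref{continuity}, continuity of $\tilde h^L$, full support of $\mu_H$ on horocycles of $\tilde\Sigma$), and your packaging is a clean, equally valid alternative.
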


\begin{proof}
	Let $H$ be a horocycle in $\tilde{\Sigma}$ and $v\in H$. We consider the function $m_v: H\to \reals$ defined by $m_v(w)=\pm \mu_{{H(v)}}((v,w))$ if $w\in H_{R/L}(v)$ and $m_v(v)=0$. It is continuous because of Proposition \ref{continuity}, strictly increasing because $\mu_H$ has full support in $H$ and is surjective because both half-horocycles have infinite measure. Since $H$ is homeomorphic to $\reals$, the map $m_v$ is a homeomorphism. We then define $\tilde{h}^M_s(v)=m_v^{-1}(s)$ for all $s\in\reals$. 
	
	It is clear that the orbit of $v$ is the horocycle $H$. Also, $\mu_H$ has no point masses by Lemma \ref{point mass}, so we have $m_{\tilde{h}_s^M(v)}(w)=m_v(w)-s$. This leads to the additive property $$\tilde{h}_s^M\circ \tilde{h}_{s'}^M=\tilde{h}_{s+s'}^M.$$ 
	To see the equality of measures, we observe that
	\begin{equation}\label{proof_measure}
		\mu_H((v,\tilde{h}_s^M(v)))=|m_v(\tilde{h}_s^M(v))|=|m_v(m_v^{-1}(s))|=|s|,
	\end{equation}
	from the definition of the flow. If we take the pullback $m_v^*\Leb$ of the Lebesgue measure by $m_v$, we can see that $m_v^*\Leb$ coincides with $\mu_H$ on the intervals $(w_1,w_2)$, so they are equal. Actually, if we set $w_1=\tilde{h}_{s_1}^M(v)$ and $w_2=\tilde{h}_{s_2}^M(v)$,
	\begin{align*}
		\Leb(m_v((w_1,w_2)))
		& =\Leb((s_1,s_2))=|s_2-s_1| \\
		&  	=\mu_H((w_1,\tilde{h}_{s_2-s_1}^M(w_1)))=\mu_H((w_1,w_2)),
	\end{align*}
	thanks to (\ref{proof_measure}).
	Then for every Borel subset $A$ of $H$, $\mu_{H}(A)=\Leb(m_v(A))$. But $s\in m_v(A)$ if and only if $\tilde{h}_{s}^M(v)=m_v^{-1}(s)\in A$. This proves the second property.
	
	It remains to prove the continuity of the flow as a map from $\reals \times \tilde{\Sigma}\to \tilde{\Sigma}$. Fix a couple $(s,v)\in \reals \times \tilde{\Sigma}$ and consider a sequence $((s_k,v_k))_k$ of elements of $  \reals\times\tilde{\Sigma}$ converging to $(s,v)$. We need to show that $\tilde{h}^M_{s_k}(v_k)$ converges to $\tilde{h}^M_s(v)$. We assume $s\ge 0$, the other case being done analogously. We know that the horocycles $H(w)$ depend continuously on $w$, so for each $k$ there exists a vector $w_k\in H(v_k)$ such that the sequence $\{w_k\}_k$ converges to $\tilde{h}^M_s(v)$. By Lemma \ref{continuity}, we know that $\mu_{H^u(v_k)}((v_k,w_k))$ converges to $\mu_{H^u(v)}((v,\tilde{h}^M_s(v)))={s}$ when $k$ tends to infinity. We deduce then that the measures of the intervals $(w_k,\tilde{h}^M_{s_k}(v_k))$ go to $0$. 
	
	We claim that the distance between $w_k$ and $\tilde{h}^M_{s_k}(v_k)$ tends to $0$. Assume, contrary to our claim, that, for some $\varepsilon>0$, there is a subsequence $k_i$ such that the Riemannian distance $d_1(w_{k_i},\tilde{h}^M_{s_{k_i}}(v_{k_i}))$ is greater than $\varepsilon$. Let us consider the points $\tilde{h}^L_{\varepsilon}(w_{k_i})$, which are in the interval $(w_{k_i},\tilde{h}^M_{s_{k_i}}(v_{k_i}))$. So the $\mu_{H(v_{k_i})}$-measure of $ (w_{k_i},\tilde{h}^L_{\varepsilon}(w_{k_i}))$ also tends to $0$. In the limit, we have $$\mu_{H(v)}((\tilde{h}^M_{s}(v),\tilde{h}^L_{\varepsilon}(\tilde{h}^M_{s}(v))))=0$$ thanks again to the continuity of the measures. This is a contradiction because $\mu_{H(v)}$ has full support in $H(v)$ by hypothesis. Finally, since $w_k$ converges to $\tilde{h}^M_s(v)$, the sequence $\tilde{h}^M_{s_k}(v_k)$ also converges to this point.
	
\end{proof}

The main result of this section is the following: there is a bijection between the set of finite invariant measures of the flow $h^L_s|_\Sigma$ and that of $h^M_s$.

\begin{proposition}
	Let $\tilde \Sigma$ be a subset of $T^1 \tilde M$ satisfying the hypothesis of Proposition \ref{horocyclic_flow}. Assume that there are at least two distinct stable manifolds of vectors in $\tilde \Sigma$ that do not have rank $2$ vectors. Consider the horocyclic flow $h_s^M$ on the set ${\Sigma}\subset T^1M$ there defined. The map
	 $\Phi: \mes({h}_s^L|_{{\Sigma}})\to \mes({h}_s^M)$ sends finite measures to finite measures, and infinite measures to infinite measures.
\end{proposition}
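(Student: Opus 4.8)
The plan is to compare $\mu^L$ with $\mu^M:=\Phi(\mu^L)$ inside small flow boxes adapted to the Margulis parametrization, and then to globalize using compactness of $T^1M$. Passing to the $\Gamma$-invariant lifts (still written $\mu^L,\mu^M$), fix a vector $v\in\tilde\Sigma$ such that $\tilde{W}^{ws}(v)$ contains no rank $2$ vector, a small transversal $\tau\subset\tilde{W}^{ws}(v)$ --- which by Lemma \ref{section} meets every nearby horocycle in exactly one point --- and a bounded open interval $I\subset\reals$, and set $\tilde D=\tilde{h}^M_I(\tau)=\{\tilde{h}^M_s(u)\mid u\in\tau,\, s\in I\}$; once $\tau$ and $|I|$ are small enough, $\tilde D$ is a flow box with product structure $\tau\times I$ that embeds into $T^1M$. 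The disintegration of $\mu^L$ over $\tau$ along the flow $\tilde{h}^L_s$ has arc-length conditionals on the horocycles and some transverse measure $\nu$ on $\tau\cap\tilde\Sigma$; by the very definition of $\Phi$ in \eqref{muprime}, and by the remark following it, the disintegration of $\mu^M$ over $\tau$ along $\tilde{h}^M_s$ has the \emph{same} transverse measure $\nu$, with conditionals that are Lebesgue in the $h^M$-time. Unwinding these two disintegrations on $\tilde D$, using that each orbit meets $\tau$ only once and that $s\mapsto\tilde{h}^M_s(u)$ is a homeomorphism onto $H(u)$, one gets
\begin{equation*}
	\mu^M(\tilde D)=|I|\,\nu(\tau),\qquad
	\mu^L(\tilde D)=\int_{\tau}\ell\big(\tilde{h}^M_I(u)\big)\,\dd\nu(u),
\end{equation*}
where $\ell$ denotes the Riemannian arc-length of a subarc of a horocycle.

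Next I would establish the two-sided estimate $0<c_{\tilde D}\le\ell\big(\tilde{h}^M_I(u)\big)\le C_{\tilde D}<\infty$ for all $u$ in the compact closure $\overline{\tau}$. The upper bound follows from continuity: the endpoints of the arc $\tilde{h}^M_I(u)$ depend continuously on $u$ by Proposition \ref{horocyclic_flow}, and the horocyclic foliation is continuous in the $C^1$ topology, so $u\mapsto\ell\big(\tilde{h}^M_I(u)\big)$ is continuous, hence bounded on $\overline{\tau}$. The lower bound is the crucial point: since $s\mapsto\tilde{h}^M_s(u)$ is injective, the arc $\tilde{h}^M_I(u)$ is non-degenerate for every $u$, so $\ell\big(\tilde{h}^M_I(u)\big)>0$; a continuous strictly positive function on a compact set is bounded below by a positive constant. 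It is precisely here that the proof must use flow boxes of the Margulis flow and not of the Lebesgue flow: along an $h^L$-flow box the analogue of $\ell\big(\tilde{h}^M_I(u)\big)$ is $\mu_{H(u)}$ of an arc of fixed arc-length, a quantity which need not stay bounded away from $0$ as $u$ ranges over $\tau\cap\tilde\Sigma$, because some horocycles in $\tilde\Sigma$ may carry measures $\mu_H$ whose restriction to that arc is arbitrarily small.

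Combining the two displayed identities with this estimate gives $\frac{c_{\tilde D}}{|I|}\,\mu^M(\tilde D)\le\mu^L(\tilde D)\le\frac{C_{\tilde D}}{|I|}\,\mu^M(\tilde D)$, so $\mu^L(\tilde D)$ and $\mu^M(\tilde D)$ are finite or infinite together. To conclude, I would cover the compact space $T^1M$ by finitely many such flow boxes $D_1,\dots,D_N$; this is where the hypothesis that $\tilde\Sigma$ contains two distinct rank-$2$-free weak stable manifolds $\tilde{W}^{ws}(v_1),\tilde{W}^{ws}(v_2)$ is used, since $(v_1)_+\neq(v_2)_+$ forces every horocycle to meet at least one of the two transversals, hence every vector of $T^1\tilde M$ to lie in the $h^M$-flow saturation of $\tilde{W}^{ws}(v_1)$ or of $\tilde{W}^{ws}(v_2)$ by Lemma \ref{section}, and by compactness finitely many small flow boxes based on these two transversals cover $T^1M$. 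Taking the lifts small enough to embed we have $\mu(D_i)=\tilde\mu(\tilde D_i)$ for $\mu\in\{\mu^L,\mu^M\}$, so $\mu^L$ is finite if and only if every $\mu^L(D_i)$ is finite, if and only if every $\mu^M(D_i)$ is finite, if and only if $\mu^M$ is finite. Since $\Phi$ is a bijection, this proves that $\Phi$ sends finite measures to finite measures and infinite measures to infinite measures. The main obstacle is the positivity $c_{\tilde D}>0$, together with the realization that it forces one to work with flow boxes of $h^M$ rather than $h^L$; the rest is bookkeeping of the disintegrations and the extraction of a finite subcover.
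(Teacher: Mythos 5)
There is a genuine gap, and it sits exactly where you locate the "main obstacle," but on the other side of it. Your flow boxes $\tilde h^M_I(\tau)$ are boxes of the \emph{Margulis} flow, hence contained in $\tilde\Sigma$; since in general $\Sigma\subsetneq T^1M$ (e.g.\ $\Sigma=\Sigma_0$), finitely many such boxes can never cover $T^1M$, and the set that actually has to be exhausted --- $\Sigma$, or $D\cap\tilde\Sigma$ for a compact fundamental domain $D$ --- is \emph{not} compact. This non-compactness is precisely the difficulty of the proposition, so "globalize using compactness of $T^1M$" is not available. Even to cover $D\cap\tilde\Sigma$ by boxes $\tilde h^M_I(\tau)$ with a bounded interval $I$ you would need a uniform upper bound on the Margulis time needed to flow a point of $D$ onto the transversal, i.e.\ a bound on $\mu_{H(w)}\bigl((w,\tilde h^L_{\pm S}(w))\bigr)$ for $w$ in a compact set; that estimate is the heart of the paper's proof and never appears in yours (Lemma \ref{section} only gives existence of the hitting time, not a bound on it in Margulis time).

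The two-sided estimate on $u\mapsto\ell\bigl(\tilde h^M_I(u)\bigr)$ is also not justified as stated, and the upper bound $C_{\tilde D}$ is false in general. Proposition \ref{horocyclic_flow} gives continuity of $\tilde h^M$ only on $\tilde\Sigma$, the function $\ell\bigl(\tilde h^M_I(\cdot)\bigr)$ is defined only on $\tau\cap\tilde\Sigma$, and $\overline\tau$ need not lie in $\tilde\Sigma$; near a horocycle outside $\tilde\Sigma$ (for instance one crossing a flat strip, on which $\mu_H$ gives an interval zero mass) Proposition \ref{continuity} forces an arc of fixed Margulis measure $|I|$ to have arbitrarily large arc-length, so no finite $C_{\tilde D}$ exists on such boxes, and with it the direction "$\mu^M$ finite $\Rightarrow\mu^L$ finite" collapses. (The lower bound $c_{\tilde D}>0$ is in fact true, but not by continuity on $\overline\tau$: it follows because, by Proposition \ref{continuity}, $\Gamma$-invariance and cocompactness, the $\mu_H$-mass of arcs of unit arc-length is uniformly bounded above, so Margulis length $|I|$ forces arc-length at least $|I|$ divided by that bound.) The paper's proof is the mirror image of your construction and avoids both problems: it takes \emph{Lebesgue}-flow boxes $\tilde h^L_{[-S,S]}(L)$ over a compact $L\subset\tilde W^{ws}(v_0)$, obtained by projecting a compact piece of a fundamental domain along $h^L$ via Lemma \ref{section}; then $\mu\bigl(\tilde h^L_{[-S,S]}(L)\cap\tilde\Sigma\bigr)=2S\,\mu_{\tilde W^{ws}(v_0)\cap\tilde\Sigma}(L\cap\tilde\Sigma)$ exactly, while the $\mu'$-mass is $\int\bigl(s^M(\cdot,S)-s^M(\cdot,-S)\bigr)\,\dd\mu_{\tilde W^{ws}(v_0)\cap\tilde\Sigma}$, and the function $s^M(\cdot,\pm S)=\pm\mu_{H(\cdot)}\bigl((\cdot,\tilde h^L_{\pm S}(\cdot))\bigr)$ is continuous and \emph{globally} defined on $T^1\tilde M$, hence bounded on the compact $L$. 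Choosing Margulis boxes to secure the lower bound, as you do, is exactly what destroys the bound one can actually prove; to repair your argument you would have to reverse the roles of the two flows in the boxes, at which point you recover the paper's proof.
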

\begin{proof}
	We first show that the image of a finite measure is finite.
	Let $\mu\in \mes(\tilde{h}_s^L|_{\tilde{\Sigma}})$ be a $\Gamma$-invariant measure whose projection to $\Sigma\subset T^1M$ is finite and let $\mu'=\Phi(\mu)$ be its image in $\mes(\tilde{h}_s^M)$. 
	
	Let $D\subset \TMu$ be a compact fundamental domain for the action of $\Gamma$. We want to show that $\mu'(D\cap \tilde{\Sigma})$ is finite. Let $v_1, \, v_2\in \tilde \Sigma $ two vectors such that $\tilde{W}^{ws}(v_1)$ and $\tilde{W}^{ws}(v_2)$ are distinct sections of the horocyclic flow (i.e. they do not contain any rank $2$ vector). Take disjoint open neighborhoods $A,B$ in $\partial \tilde M$ of the points $v_{1+},v_{2+}$. By the continuity of the projection to the boundary we know that the sets
	$$
	\{w\in D \,|\,w_- \notin A\},\quad 	\{w\in D \,|\,w_- \notin B\}
	$$ 
	are closed in $D$, therefore compact. They form a cover of $D$. This reduces the proof to showing that $\mu'(K\cap \tilde{\Sigma})$ is finite where $K$ is a compact set such that $v_{0+} \notin K_-$, where $v_0$ is any vector of $\tilde \Sigma$ whose stable manifold has no rank $2$ vectors. Let us fix such a vector $v_0$ until the end of the proof.
	
	By lemma \ref{section}, for every $w\in K$ there exists a number $s(w)$ such that $\tilde{h}_{s(w)}^L(w)\in \tilde{W}^{ws}(v_0)$ (Figure \ref{fig:pic2}). We see $s(w)$ as a continuous function from $K$ to $\reals$. It is then bounded by some constant $S>0$. The function $w\mapsto \tilde{h}_{s(w)}^L(w)$ is also continuous, so the projection of $K$ to $\tilde{W}^{ws}(v_0)$ is a compact set, denoted by $L$. The following inclusion holds,
	$$
	K\subset \tilde{h}^L_{[-S,S]}(L).
	$$
	
	\begin{figure}[h]
		\centering
		\includegraphics{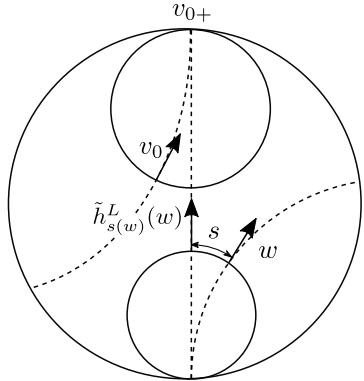}
		\caption{ \label{fig:pic2}
			Definition of $s(w)$.}
	\end{figure}
	
	The problem is reduced now to showing that sets of the form $\tilde{h}^L_{[-S,S]}(L)\cap \tilde{\Sigma}$ have finite $\mu'$-measure. Recall that our measure $\mu$ is the product of some measure $\mu_{\tilde{W}^{ws}(v_0)\cap \tilde{\Sigma}}$ on $\tilde{W}^{ws}(v_0)\cap \tilde{\Sigma}$ and the Lebesgue measures on the horocycles. Subsequently, 
	$$
	\mu(\tilde{h}^L_{[-S,S]}(L)\cap \tilde{\Sigma})=2S \cdot  \mu_{\tilde{W}^{ws}(v)\cap \tilde{\Sigma}}(L\cap \tilde{\Sigma}).
	$$ 
	Since $\tilde{h}_{[-S,S]}(L)$ is covered by a finite number of images of $D$ by elements of $\Gamma$, and $D\cap \tilde{\Sigma}$ has finite $\mu$-measure by hypothesis, then the left hand side of the previous equation is finite, and so is $\mu_{\tilde{W}^{ws}(v)\cap \tilde{\Sigma}}(L\cap \tilde{\Sigma})$.
	
	The measure $\mu'$ of the set $\tilde{h}^L_{[-S,S]}(L)\cap \tilde{\Sigma}$ can also be decomposed as 
	$$
		\mu'(\tilde{h}^L_{[-S,S]}(L)\cap \tilde{\Sigma})=\int_{L\cap \tilde{\Sigma}}\int_{s^M(v,-S)}^{s^M(v,S)} \dd s \,\dd \mu_{\tilde{W}^{ws}(v)\cap \tilde{\Sigma}}(u),
	$$
	where $s^M$ is the change of time between the Lebesgue and the Margulis flows. If the quantities $s^M(v,-S)$ and $s^M(v,S)$ are bounded on the set $ L\cap \tilde{\Sigma}$, then the last integral is bounded and we obtain the desired result.
	
	We have the equality $$s^M (v,\pm S)= \pm \mu_{H(v)}((v,\tilde{h}_{\pm S}^L(v))).$$ Thanks to the continuity of the measure on the horocycles, the functions $$v\mapsto \mu_{H(v)}((v,\tilde{h}_{\pm S}^L(v)))$$ are continuous (and globally defined). So they are bounded on $L$ because of the compactness. This completes the proof of the first implication.
	
	Let $\mu\in\mes(\tilde{h}_s^L|_{\tilde{\Sigma}})$ any $\Gamma$-invariant measure and assume that its image $\mu'=\Phi(\mu)$ in $\mes(\tilde{h}_s^M)$ induces a finite measure on the quotient $\Sigma\subset T^1 M$. We need to show that the $\mu$-measure of $D\cap \tilde{\Sigma}$ is finite, where $D$ is a compact fundamental domain.
	Similarly to the first situation we can reduce the problem to showing that, if $S>0$, $L$ is a compact subset of $\tilde{W}^{ws}(v)$ and $v\in \tilde \Sigma$ is a vector whose stable manifold has no rank $2$ vectors, then  the sets of the form $\tilde{h}^L_{[-S,S]}(L)\cap \tilde{\Sigma}$ have finite $\mu$-measure. Since $$\mu(\tilde{h}^L_{[-S,S]}(L)\cap \tilde{\Sigma})=2S \cdot  \mu_{\tilde{W}^{ws}(v)\cap \tilde{\Sigma}}(L\cap \tilde{\Sigma}),$$ this is equivalent to showing that $L\cap \tilde{\Sigma}$ has finite $\mu_{\tilde{W}^{ws}(v)\cap \tilde{\Sigma}}$-measure. 
	
	We know that $\tilde{h}^L_{[-S,S]}(L)\cap \tilde{\Sigma}$ has finite $\mu'$-measure, because it can be covered by finitely many images of $D$, and $D\cap \tilde \Sigma$ has finite measure. 
	This measure is the integral of the function $s^M(v,S)-s^M(v,-S)$ over $L\cap \tilde{\Sigma}$ with respect to $\mu_{\tilde{W}^{ws}(v)\cap \tilde{\Sigma}}$. But the function  $s^M(v,S)-s^M(v,-S)$ is strictly positive because $S>0$. So the measure $\mu_{\tilde{W}^{ws}(v)\cap \tilde{\Sigma}}(L\cap \tilde{\Sigma})$ is finite, otherwise we would obtain an infinite integral. This proves that $\mu(\tilde{h}^L_{[-S,S]}(L))$ is finite, as we wanted. 
\end{proof}

\begin{corollary}
	Let $ M$ be an oriented rank $1$ compact connected Riemannian surface with nonpositive curvature and let $\Sigma$ be a subset of $T^1M$ whose lift $\tilde \Sigma$ to $T^1 \tilde M$ satisfies the hypothesis of Proposition \ref{horocyclic_flow}.
	Then the map $\mu \mapsto \Phi(\mu)/\Phi(\mu)(\Sigma)$ is a bijection between $\prob(h_s^L|_{{\Sigma}})$ and $\prob(h_s^M)$.

\end{corollary}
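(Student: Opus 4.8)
The plan is to deduce the corollary from the preceding proposition together with the Beboutoff--Stepanoff correspondence of Theorem~\ref{BebStep}, the only real work being to keep track of the normalizations. A preliminary point is that the extra hypothesis of the preceding proposition — the existence of at least two distinct weak stable manifolds of vectors of $\tilde\Sigma$ containing no rank $2$ vectors — is automatic here: if $\tilde\Sigma=\emptyset$ the statement is vacuous, so fix a horocycle $H\subset\tilde\Sigma$. By Corollary~\ref{simplification_measure}, $\mu_H$ is carried by the set of $w\in H$ with $w_+\notin\tilde S_+$, and by the discussion following Lemma~\ref{section} each such $w$ is rank $1$ and has $\tilde W^{ws}(w)$ free of rank $2$ vectors. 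Hypothesis (i) of Proposition~\ref{horocyclic_flow} says $\mu_H$ has full support, so this set is dense in $H$, hence infinite; since $P_H$ is injective on rank $1$ vectors, distinct such $w$ have distinct forward endpoints and hence distinct weak stable manifolds. Thus the preceding proposition applies.

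Next I would invoke Theorem~\ref{BebStep} for the separable metric space $\Sigma\subset T^1M$ and the two continuous, fixed-point-free flows $h^L_s|_\Sigma$ and $h^M_s$, which have the same orbits (the horocycles of $\Sigma$, each homeomorphic to $\reals$): this gives a bijection $\mes(h^L_s|_\Sigma)\to\mes(h^M_s)$, which is precisely the map $\Phi$ introduced above, and by the preceding proposition $\Phi$ restricts to a bijection between the subsets of finite measures. I would also record two elementary features of $\Phi$ read directly off formula~(\ref{muprime}): it is positively homogeneous, $\Phi(c\mu)=c\,\Phi(\mu)$ for every $c>0$ — because the transverse measures $\mu_{\tilde W^{ws}(v)\cap\tilde\Sigma}$ depend linearly on $\mu$ — and, being a bijection with $\Phi(0)=0$, it carries nonzero measures to nonzero measures.

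With these in hand the rest is routine. Set $N(\mu)=\Phi(\mu)/\Phi(\mu)(\Sigma)$. If $\mu\in\prob(h^L_s|_\Sigma)$ then $\mu$ is finite and nonzero, so $\Phi(\mu)$ is finite and nonzero and $0<\Phi(\mu)(\Sigma)<\infty$; hence $N(\mu)$ is a well-defined $h^M_s$-invariant probability measure, and $N$ maps $\prob(h^L_s|_\Sigma)$ into $\prob(h^M_s)$. For injectivity, if $N(\mu_1)=N(\mu_2)$, put $c_i=\Phi(\mu_i)(\Sigma)$; then $\Phi(c_2\mu_1)=c_2\Phi(\mu_1)=c_1\Phi(\mu_2)=\Phi(c_1\mu_2)$, so $c_2\mu_1=c_1\mu_2$ by injectivity of $\Phi$, and evaluating on $\Sigma$ gives $c_2=c_1$ (since $\mu_i(\Sigma)=1$), whence $\mu_1=\mu_2$. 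For surjectivity, given $\nu\in\prob(h^M_s)$, the measure $\mu_0=\Phi^{-1}(\nu)$ is finite by the preceding proposition and nonzero; then $\mu=\mu_0/\mu_0(\Sigma)$ lies in $\prob(h^L_s|_\Sigma)$ and, by homogeneity, $N(\mu)=\Phi(\mu_0)/\Phi(\mu_0)(\Sigma)=\nu/\nu(\Sigma)=\nu$.

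I expect no serious obstacle: all the analytic content — that $\Phi$ and $\Phi^{-1}$ preserve finiteness even though $\Sigma$ need not be locally compact — already lives in the preceding proposition, and Theorem~\ref{BebStep} supplies the bijection of invariant measures. The only points that need care are the bookkeeping with scalars (the correspondence is genuinely between cones of invariant measures, so one must normalize consistently on both sides, as above) and the short verification, done first, that the hypotheses of Proposition~\ref{horocyclic_flow} already force the extra assumption of the preceding proposition.
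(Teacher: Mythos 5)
Your proposal is correct and takes essentially the same route as the paper, which states this corollary without further proof as an immediate consequence of the preceding proposition (finite measures correspond to finite measures under $\Phi$) combined with the Beboutoff--Stepanoff correspondence of Theorem \ref{BebStep}, homogeneity of $\Phi$, and normalization. Your preliminary verification that the hypotheses of Proposition \ref{horocyclic_flow} already force the existence of two distinct weak stable manifolds without rank $2$ vectors is a careful treatment of a point the paper leaves implicit, but it does not change the approach.
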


	We finally apply this result to the subset $\Sigma_0$ of $T^1M$ defined as the union of horocycles containing a rank $1$ $g_t$-recurrent vector,
	$$ \Sigma_0 =\cup_{v\in Rec\,\cap R_1}H(v).$$
	This set satisfies the hypothesis of \ref{horocyclic_flow}, so the Margulis parametrization of the horocyclic flow can be defined. In \cite[Theorem 3.6]{Burniol21}, we proved that the Margulis flow on $\Sigma_0$ is uniquely ergodic. Thanks to the work in this paper, we now know that the same holds for the Lebesgue parametrization.

\begin{theorem}
	Let $M$ be an oriented rank $1$ compact connected Riemannian surface with nonpositive curvature. In restriction to $\Sigma_0$, the flow given by the parametrization by arc length of the horocycles is uniquely ergodic.
\end{theorem}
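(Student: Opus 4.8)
The strategy is to combine the previous corollary with the known unique ergodicity of the Margulis flow from \cite{Burniol21}. First I would recall that the set $\Sigma_0 = \cup_{v\in Rec\,\cap\, R_1}H(v)$ is a Borel $h_s$-invariant union of horocycles, whose lift $\tilde\Sigma_0$ to $T^1\tilde M$ is $\Gamma$-invariant. The first substantive step is to verify that $\tilde\Sigma_0$ satisfies hypotheses (i) and (ii) of Proposition \ref{horocyclic_flow}: given any horocycle $H\subset\tilde\Sigma_0$, it contains (a lift of) a rank $1$ recurrent vector $v$, and by \cite[Lemma 2.3]{Burniol21} every vector on $H$ then has rank $1$, so $H\cap\tilde R_1=H$; hence by Corollary \ref{simplification_measure} the measure $\mu_H$ has full support on $H$ (since $\sigma_0$ has full support in $\Lambda(\Gamma)$ and $P_H(H)$ is open, meeting the limit set), giving (i). For (ii), the recurrence of $v$ forces both $v_+$ and $v_-$ to lie in $\Lambda(\Gamma)$, and pushing the conformal density under $g_{\pm t}$ along the recurrence times shows the two half-horocycles have infinite $\mu_H$-mass. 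This is essentially the verification promised in the sentence ``This set satisfies the hypothesis of \ref{horocyclic_flow}''.

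The second step is to invoke Proposition \ref{horocyclic_flow} to obtain the Margulis horocyclic flow $\tilde h_s^M$ on $\tilde\Sigma_0$, which descends to $h_s^M$ on $\Sigma_0\subset T^1M$, and to note that $\tilde\Sigma_0$ also satisfies the extra hypothesis of the penultimate proposition: there are at least two distinct weak stable manifolds through vectors of $\tilde\Sigma_0$ with no rank $2$ vectors — indeed, $\tilde S_+$ is $\sigma_0$-negligible hence has dense complement in $\partial\tilde M$, while the endpoints $v_+$ of rank $1$ recurrent vectors (which are dense in $\Lambda(\Gamma)$) give many $v\in\tilde\Sigma_0$ whose stable leaf $\tilde W^{ws}(v)$ avoids rank $2$ vectors (equivalently, $v_+\notin\tilde S_+$). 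Then the Corollary following the penultimate proposition applies verbatim: the map $\mu\mapsto\Phi(\mu)/\Phi(\mu)(\Sigma_0)$ is a bijection between $\prob(h_s^L|_{\Sigma_0})$ and $\prob(h_s^M)$.

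The third and final step is immediate: by \cite[Theorem 3.6]{Burniol21} (quoted in the excerpt as Theorem B of \cite{Burniol21}), $\prob(h_s^M)$ is a singleton, so the bijection forces $\prob(h_s^L|_{\Sigma_0})$ to be a singleton as well, which is precisely the assertion that the arc-length parametrized horocyclic flow restricted to $\Sigma_0$ is uniquely ergodic.

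I expect the only genuine work to be in the first step — checking (i) and (ii) of Proposition \ref{horocyclic_flow} for $\tilde\Sigma_0$, in particular the infiniteness of the half-horocycle masses. The subtlety is that local finiteness of $\mu_H$ (Lemma \ref{point mass}) does not by itself control behaviour at infinity along the horocycle; one must use recurrence of the underlying geodesic together with the expansion relation $\mu_{g_tH}=e^{\delta t}g_{t*}\mu_H$ to transport mass and conclude that neither half can have finite total mass. Everything after that is a formal chain of already-established equivalences, so the proof itself will be short.
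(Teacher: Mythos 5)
Your proposal is correct and follows essentially the same route as the paper: verify that $\tilde\Sigma_0$ satisfies the hypotheses of Proposition \ref{horocyclic_flow}, obtain the Margulis flow $h_s^M$ on $\Sigma_0$, use the bijection $\prob(h_s^L|_{\Sigma_0})\leftrightarrow\prob(h_s^M)$ from the preceding corollary, and conclude by the unique ergodicity of $h_s^M$ proved in \cite[Theorem 3.6]{Burniol21}. The only difference is that the paper simply asserts that $\Sigma_0$ satisfies the hypotheses of Proposition \ref{horocyclic_flow} (this verification being carried out in \cite{Burniol21}), whereas you sketch it directly via the full support of $\sigma_0$, the rank-$1$ property along horocycles through recurrent vectors, and the expansion relation $\mu_{g_tH}=e^{\delta t}g_{t*}\mu_H$ combined with recurrence for the infinite mass of half-horocycles.
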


\section{Unique ergodicity on compact surfaces without flat strips}\label{Section4}

We now look back to Proposition \ref{horocyclic_flow}. There seem to be two difficulties in defining the Margulis parametrization on certain types of horocycles. One is a rather technical difficulty, the fact that half-horocycles have infinite measure. We are not sure to what extent this could fail. Both in \cite{Burniol21} and here, we have worked under the hypothesis that it is true. In contrast, the fact that a horocycle $H$ contains an interval of $\mu_H$-zero measure is a clear obstruction to the definition of the parametrization on $H$. This in fact can only happen if the interval consists of rank $2$ vectors. This phenomenon is produced by flat strips.

A \textit{flat strip} on the universal cover is a totally geodesic submanifold isometric to the space $\reals\times [0,r]$ for some $r>0$. Given a horocycle $H$ in $T^1\tilde M$, an interval $[v,w]\subset H$ consists only of rank $2$ vectors if and only if the geodesics generated by the vectors in $[v,w]$ form a flat strip. In this case, we say that $H$ cuts a flat strip.

Next we prove that the Margulis parametrization is defined on the whole unit tangent bundle if $M$ has no flat strips. With the help of this parametrization, we show that the horocyclic flow is uniquely ergodic using standard techniques.
\begin{proposition}
	Let $M$ be an oriented nonpositively curved compact surface without flat strips. Then for every horocycle $H$ in $T^1\tilde{M}$ the measure $\mu_H$ is of full support in $H$ and the $\mu_H$-measure of each  half-horocycle is infinite.
\end{proposition}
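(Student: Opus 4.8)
The plan is to verify the two hypotheses of Proposition \ref{horocyclic_flow} directly, using the simplified expression for $\mu_H$ provided by Corollary \ref{simplification_measure} and the absence of flat strips. Recall that for a horocycle $H$ in $\TMu$ and a Borel set $A\subset H$ we have
\[
\mu_H(A)=\int_{P_H(H\cap \tilde R_1)}\mathbf 1_A(P_H^{-1}(\eta))\,\phi_H(\eta)\,\dd\sigma_0(\eta),
\]
with $\phi_H(\eta)=e^{\delta\beta_\eta(0,\pi(P_H^{-1}(\eta)))}>0$ on the support, and that $P_H:H\to\borderM$ is injective when $M$ has no flat strips (two vectors on $H$ with the same positive endpoint would be asymptotic in both directions and hence bound a flat strip). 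So $\mu_H$ is, up to a positive continuous density, the pushforward of $\sigma_0$ restricted to the set $P_H(H)=\{v_+ : v\in H\}\subset\borderM$ of positive endpoints of vectors on $H$, via the homeomorphism $P_H^{-1}$ onto $H$.

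\textbf{Full support.} I would show $P_H(H)$ meets the limit set $\Lambda(\Gamma)=\supp\sigma_0$ densely, so that every nonempty open subinterval of $H$ has positive $\mu_H$-measure. Fix $v\in H$, so $v_-$ is the common negative endpoint of all vectors on $H$; the visibility property of nonflat compact nonpositively curved surfaces (used already in Lemma \ref{section}, from \cite[Proposition 2.5]{Eberlein79}) gives, for every $\xi\in\borderM\setminus\{v_-\}$, a vector on $H$ with positive endpoint $\xi$, hence $P_H(H)\supseteq\borderM\setminus\{v_-\}$. Since $\sigma_0$ has full support in $\Lambda(\Gamma)$, which is infinite, and no atoms \cite[Proposition 5]{LinkPicaud}, removing the single point $v_-$ changes nothing: $\sigma_0(P_H(H))=\sigma_0(\borderM)$ and every nonempty open arc of $H$ corresponds under the homeomorphism $P_H$ to a nonempty open subset of $\borderM$, which has positive $\sigma_0$-measure because $\Lambda(\Gamma)$ is dense-in-itself. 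Multiplying by the strictly positive density $\phi_H$ keeps positivity, so $\mu_H$ has full support in $H$.

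\textbf{Infinite mass of half-horocycles.} Fix $v\in H$ and consider $H_R(v)$, the half-horocycle in the positive direction; its image $P_H(H_R(v))$ is one of the two open arcs of $\borderM$ determined by $v_-$ and $v_+$, so it is an open set of full $\sigma_0$-measure up to the endpoints, in particular of positive $\sigma_0$-measure. To upgrade positivity to infinity I would use the geodesic-flow expansion $\mu_{g_tH}=e^{\delta t}g_{t*}\mu_H$ together with compactness of $M$: pick a point $p\in\borderM$ in the interior of the arc $P_H(H_R(v))$ with $p\in\Lambda(\Gamma)$ and let $w\in H_R(v)$ be the vector with $w_+=p$; pushing $w$ forward by the geodesic flow expands the $\mu$-mass of any fixed small interval of $H_R(v)$ around $w$ by $e^{\delta t}\to\infty$, while the projection to $T^1M$ keeps these intervals inside a compact set where the continuous density $\phi$ is bounded, forcing the preimage interval on $H_R(v)$ to have already had infinite, or arbitrarily large, mass. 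Concretely: if $\mu_{H_R(v)}(H_R(v))<\infty$, then since $g_{-t}H=H(g_{-t}v)$ is again a horocycle and $\mu_{H(g_{-t}v)}=e^{-\delta t}g_{-t*}\mu_H$, the image under $g_{-t}$ of a subinterval carrying most of the mass would have mass shrinking like $e^{-\delta t}$; but the orbit of $H_R(v)$ accumulates (by compactness of $T^1M$ and density of $\Lambda(\Gamma)$) on horocycles through recurrent rank $1$ vectors, on which $\mu$ assigns positive mass to every interval by full support, giving a contradiction with a uniform lower bound coming from Proposition \ref{continuity}. Either route reduces the claim to the previously established full support plus the expansion law.

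\textbf{Main obstacle.} The delicate point is the infinite-mass statement: full support only gives positivity of $\mu_{H_R(v)}$, not infinitude, and one must genuinely exploit the dynamics. I expect the cleanest argument to run through the conformal density directly — writing $\mu_{H_R(v)}(H_R(v))=\int_{I}e^{\delta\beta_\eta(0,\pi(P_H^{-1}(\eta)))}\,\dd\sigma_0(\eta)$ over the relevant arc $I$, and showing the Busemann exponent tends to $+\infty$ as $\eta$ runs to the endpoint $v_+$ of $I$, because $\pi(P_H^{-1}(\eta))$ escapes to infinity in $\tilde M$ along $H$ while $0$ stays fixed — so the integral diverges even though $\sigma_0(I)<\infty$. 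Making this growth estimate rigorous (controlling $\beta_\eta(0,\pi(w))$ for $w\in H$ far out, using that $M$ is compact so $\pi(H)$ stays within bounded distance of a geodesic and the horocycle itself is a proper embedded line) is the technical heart; the rest is bookkeeping with Corollary \ref{simplification_measure} and visibility.
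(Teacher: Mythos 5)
Your first half (full support) is essentially sound, though it runs differently from the paper: you use visibility plus injectivity of $P_H$ (no flat strips) to see that $P_H$ is a homeomorphism of $H$ onto $\borderM\setminus\{v_-\}$ and then invoke full support and non-atomicity of $\sigma_0$; for this to give $\mu_H\bigl((v,w)\bigr)>0$ you must also quote that $\tilde S_+$ is $\sigma_0$-negligible, since Corollary \ref{simplification_measure} only integrates over $P_H(H\cap\tilde R_1)$. The paper argues more directly: absence of flat strips forces every interval of $H$ to contain a rank~$1$ vector, openness of the rank~$1$ set gives a whole subinterval of rank~$1$ vectors, and $\Lambda(\Gamma)=\borderM$ does the rest. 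Either way this part is fine.

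The gap is in the infinite-mass statement, and it sits exactly where you locate the ``technical heart.'' The route you designate as the cleanest --- showing $\beta_\eta(0,\pi(P_H^{-1}(\eta)))\to+\infty$ as $\eta$ approaches the far endpoint of the arc and concluding that $\int_I e^{\delta\beta_\eta(0,\pi(P_H^{-1}(\eta)))}\,\dd\sigma_0(\eta)$ diverges --- does not follow: $\sigma_0$ is a finite measure, and an integrand blowing up at one endpoint of $I$ integrates to a finite quantity unless you have a quantitative lower bound on the $\sigma_0$-mass of arcs shrinking to $v_-$ compared with the growth of the density (a shadow-lemma type estimate), which you neither state nor prove; moreover the geometric input you invoke (``$\pi(H)$ stays within bounded distance of a geodesic'') is false for a horocycle in $\tilde M$. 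Your alternative sketch via the expansion law $\mu_{g_tH}=e^{\delta t}g_{t*}\mu_H$ and accumulation of $g_{-t}v$ can be made to work, but its crux is precisely the ``uniform lower bound coming from Proposition \ref{continuity}'' that you assert without proof; once you establish it --- the map $w\mapsto\mu_{H(w)}\bigl((w,\tilde h^L_1(w))\bigr)$ is continuous by Proposition \ref{continuity}, strictly positive by full support, $\Gamma$-invariant, hence bounded below by some $\alpha>0$ by compactness of $T^1M$ --- the geodesic-flow detour is superfluous: a half-horocycle contains infinitely many disjoint unit intervals, each of $\mu_H$-mass at least $\alpha$, so its mass is infinite. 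That one-line packing argument is the paper's proof; as written, your proposal neither commits to it nor supplies the missing uniform bound, and the route it does commit to is insufficient.
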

\begin{proof}
	Every interval $(v,w)$ on the horocycle $H$ contains a rank $1$ vector. Otherwise, all the vectors in $[v,w]$ would be of rank $2$, so the curvature would vanish everywhere on the geodesics they generate, and $v$ and $w$ would bound a flat strip.
	
	Since the rank $1$ set is open, then $(v,w)$ contains an interval of rank $1$ vectors. Now we use the fact that $ \mu_H$ is a positive function times the projection of $\sigma_0$ on the rank $1$ vectors (Corollary \ref{simplification_measure}). Recall that $\sigma_0$ is supported on the limit set, which is the whole boundary $\partial \tilde M$ because $M$ is compact, so the measure of $(v,w)$ is strictly positive. This proves that $\mu_H$ is fully supported.
	
	To prove the infiniteness of the measures $\mu_H$ on half-horocycles, we consider the map from $T^1M$ to $\reals$ which sends $v$ to the $\mu_{H(v)}$-measure of the horocyclic ball of center $v$ and radius $1$. This map is continuous, by Proposition \ref{continuity}, and $T^1M$ is compact, so it attains an absolute minimum $\alpha\in\reals$. But the map is everywhere strictly positive, so the constant $\alpha$ is strictly positive too. Now, a half-horocycle $H_{R/L}$ in the tangent space of the universal cover $\tilde{M}$, contains infinitely many disjoint unstable balls of radius $1$, each of them with measure at least $\alpha>0$. We conclude that the $\mu_H$-measures of the half-horocycles $H_{R/L}$ are infinite.	
\end{proof}

This result ensures that the conditions of Proposition \ref{horocyclic_flow} are satisfied on the whole unit tangent bundle. The Margulis parametrization can be defined everywhere for the class of nonpositively curved compact surfaces without flat strips. It induces a horocyclic flow $h_s^M$ on $T^1M$.
We deduce the main result of this section, namely the unique ergodicity of the horocyclic flow, thanks to the good properties of this parametrization.

\begin{theorem}\label{thm_un_nostrips}
	Let $M$ be an orientable nonpositively curved compact surface without flat strips. Then there is a unique Borel probability measure on $T^1M$ invariant by $h_s^M$. This measure is a constant multiple of the Bowen-Margulis measure. 
\end{theorem}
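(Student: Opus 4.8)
The plan is to show that the unique $h_s^M$-invariant probability measure on $T^1 M$ is (a normalization of) the Bowen–Margulis measure $\mu_{BM}$, by establishing two things: first, that $\mu_{BM}$ is $h_s^M$-invariant, and second, that it is the only invariant probability measure. For the invariance, I would use the local product structure of $\mu_{BM}$ coming from its definition in \eqref{definition_measure} together with Corollary \ref{simplification_measure} and the corollary following Proposition \ref{rank1neg}: on the universal cover, $\mu_{BM}$ disintegrates (away from a weak unstable leaf $\tilde W^{wu}(-v)$) as the Lebesgue measure along geodesic flow lines times the measure $\bar\mu$ on endpoint pairs, and equivalently — grouping the geodesic direction with the horocycle direction into the weak unstable leaf — as $\mu_H$ along horocycles times a transverse measure on a weak stable section. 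Since the Margulis parametrization was precisely defined so that $\tilde h_s^M$ pushes the Lebesgue parameter $s$ forward to $\mu_H$ (Proposition \ref{horocyclic_flow}: $\mu_{H(v)}(A) = \Leb(\{s : \tilde h_s^M(v) \in A\})$), the flow $\tilde h_s^M$ preserves $\mu_{BM}$ locally, hence globally on $T^1\tilde M$; $\Gamma$-invariance of $\mu_{BM}$ then gives invariance of the induced measure on $T^1 M$, which is finite since $M$ is compact. Normalizing gives an element of $\prob(h_s^M)$.

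For uniqueness, the cleanest route is to invoke Theorem \ref{thm_un_nostrips}'s hypothesis that $M$ has no flat strips: by the preceding proposition the conditions of Proposition \ref{horocyclic_flow} hold on all of $T^1\tilde M$, so $\Sigma = T^1 M$ and the Margulis flow $h_s^M$ is globally defined. I would then apply the results already available for the subset $\Sigma_0$: by \cite[Theorem 3.6]{Burniol21} (restated in the excerpt), the Margulis flow on $\Sigma_0$ is uniquely ergodic, and its unique invariant probability measure is the normalized Bowen–Margulis measure. Since $\Sigma_0$ has full $\mu_{BM}$-measure — this is exactly Proposition \ref{rank1neg} combined with the Poincaré recurrence argument in the proof of the proposition on $\tilde S_+$, which shows that rank $1$ recurrent vectors form a full-measure set — any $h_s^M$-invariant probability measure $\nu$ on $T^1 M$ must be supported, up to a null set, inside... here lies the subtlety: an arbitrary invariant measure need not a priori give full weight to $\Sigma_0$.

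To close that gap I would argue by equidistribution, following \cite{Burniol21} or \cite{Coudene1}: given any $\nu \in \prob(h_s^M)$, one studies the push-forwards $g_{-t*}$ of pieces of horocycles and uses the expansion relation $\mu_{g_t H} = e^{\delta t} g_{t*}\mu_H$ together with mixing of $(T^1 M, \mu_{BM}, g_t)$ to show that the normalized Margulis-length averages along any horocycle converge weakly to $\mu_{BM}/\mu_{BM}(T^1 M)$; since $\nu$ is a limit of such averages (by invariance and the pointwise ergodic theorem applied along horocyclic orbits), $\nu = \mu_{BM}/\mu_{BM}(T^1 M)$. The main obstacle is precisely this last step — controlling invariant measures that might escape $\Sigma_0$ — and I expect it to be handled by the absence of flat strips forcing every horocycle to be ``mostly'' rank $1$ (the previous proposition's first paragraph), so that no invariant probability measure can concentrate on the singular set; one shows $\nu(\tilde S) = 0$ by noting that $\tilde S$ projects to a proper closed geodesic-flow-invariant set whose horocyclic saturation still misses an open set of positive $\mu_{BM}$-measure, contradicting the equidistribution unless $\nu$ charges it with zero mass. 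Once $\nu(\Sigma_0) = 1$, unique ergodicity on $\Sigma_0$ finishes the proof, and the identification with $\mu_{BM}$ is automatic.
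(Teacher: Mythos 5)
Your first part (that $\mu_{BM}$ is $h_s^M$-invariant and finite, hence gives an element of $\prob(h_s^M)$ after normalization) is fine and matches the paper's starting observation. The problem is the uniqueness half, in both routes you offer. The reduction to $\Sigma_0$ cannot work as stated: even without flat strips one has $\Sigma_0\subsetneq T^1M$ (the paper's introduction gives the example of a surface whose curvature vanishes exactly on one closed geodesic; the whole horocycle of the rank $2$ periodic vector lies outside $\Sigma_0$), and showing that an \emph{arbitrary} $h_s^M$-invariant probability measure gives full mass to $\Sigma_0$ is essentially as hard as the theorem itself --- you acknowledge this, but the patch you propose (``the horocyclic saturation of the singular set misses an open set of positive $\mu_{BM}$-measure, contradicting the equidistribution'') is circular, because the equidistribution you invoke is precisely for horocycles through singular, non-recurrent vectors, i.e.\ the horocycles for which nothing has been established yet.

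Your second route (equidistribution of horocycles pushed by $g_{-t}$, \`a la Marcus/Coud\`ene) is the right general strategy, but it skips the one point where the nonpositively curved case genuinely differs from negative curvature. Mixing of $(T^1M,\mu_{BM},g_t)$, or the expansion relation $\mu_{g_tH}=e^{\delta t}g_{t*}\mu_H$, only yields equidistribution statements for $\mu_{BM}$-typical horocycles (or in $L^2$); to conclude unique ergodicity one needs the averages $M_R(f)$ to converge to the constant $\int f\,\dd\mu_{BM}$ for \emph{every} vector, including those on horocycles that cut the singular set. In Coud\`ene's theorem \cite{Coudene1} this uniformity is extracted from the equicontinuity of the family $\{M_1(f\circ g_t)\}_{t>0}$, which there is deduced from absolute continuity of $\mu_{BM}$ with respect to the weak stable foliation; that hypothesis fails here, since weak stable manifolds are tangent to unstable horocycles at rank $2$ vectors. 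The paper's actual proof replaces it by a direct proof of this equicontinuity, using the disintegration of $\mu_{H}$ over $\partial\tilde M$ (Corollary \ref{simplification_measure}) and the fact that stable distances are non-increasing under $g_t$, and then runs Arzel\`a--Ascoli, the von Neumann ergodic theorem, full support of $\mu_{BM}$ and transitivity of the horocyclic foliation to get uniform convergence of $M_R(f)$ to a constant, which gives uniqueness without ever passing through $\Sigma_0$. Your proposal contains no substitute for this equicontinuity step, so the uniqueness argument has a genuine gap.
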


\begin{proof}
We observe that the Bowen-Margulis measure $\mu_{BM}$ is invariant by both the geodesic flow $g_t$ and the Margulis horocyclic flow $h^M_s$. The expanding property of the measures on the horocycles translates to the commuting property $g_t\circ h^M_s =h^M_{se^{\delta t}}\circ g_t$ between the flows. In this situation there is an argument of Coudène to show the unique ergodicity of $h^M_s$ when $\mu_{BM}$ is absolutely continuous with respect to the weak stable foliation \cite{Coudene1}. The geodesic flow of a nonpositively curved surfaces without flat strips does not meet this last requirement because there is not a local product structure on non-hyperbolic regions.	Indeed, weak stable manifolds are tangent to unstable horocycles on rank $2$ vectors. We thus need to adapt Coudène's argument.

Fortunately, the absolute continuity of $\mu_{BM}$ with respect to the weak stable foliation only intervenes in the proof of the equicontinuity of averages along a horocycle pushed by the geodesic flow. The latter fact can be shown in our case using the disintegration of $\mu_{BM}$ on the boundary $\partial \tilde M$.

\begin{lemma} Let $M$ be an orientable nonpositively curved compact surface without flat strips. Let $f:T^1M \to \reals$ be a continuous function. For every $v\in T^1M$ and $R>0$, write  
	\begin{equation*}
		M_R(f)(v):=\frac{1}{R}\int_0^R {f}(h_s^M(v))ds.
	\end{equation*}
	Then the family of functions $\{M_1(f\circ g_t)\}_{t>0}$ is equicontinuous at every point $v\in T^1 M$.
\end{lemma}

\begin{proof}
	We will rather work on the universal cover. Let $f$ be a real continuous $\Gamma$-invariant function on $T^1 \tilde M$. Since $\Gamma$ is cocompact, the absolute value of $f$ is bounded by a real constant $C>0$. From the definition of $\tilde{h}_s^M$, the average of $f$ is 
	\begin{equation*}
		M_1(f\circ g_t)(v)=\int_{[v,\, \tilde{h}_1^M(v)]}f\circ g_t \, \dd \mu_{H(v)},		
	\end{equation*}
	which is also written as 
	$$
	M_1(f\circ g_t)(v) =	\int_{\partial \tilde M} \mathbf{1}_{[v_+,\tilde{h}_1^M(v)_+]\setminus \tilde{S}_+}\cdot  f\circ g_t \circ P_{H(v)}^{-1} \cdot 
	\phi_{H(v)}
	\,\dd \sigma_0
	$$
	by Corollary \ref{simplification_measure}.
	
	Given two vectors $v,w\in T^1\tilde M$, we observe 
	\begin{align*}
		|M_1&(f\circ g_t)(v) - M_1(f\circ g_t)(w)| \le  \\
		& \le \int_{\partial \tilde M} | (\mathbf{1}_{[v_+,\tilde{h}_1^M(v)_+]\setminus \tilde{S}_+} \phi_{H(v)} - 
		\mathbf{1}_{[w_+,\tilde{h}_1^M(w)_+]\setminus \tilde{S}_+} \phi_{H(w)} )
		\cdot  f\circ g_t \circ P_{H(w)}^{-1} |
		\,\dd \sigma_0  \\
		& + \int_{\partial \tilde M} | \mathbf{1}_{[v_+,\tilde{h}_1^M(v)_+]\setminus \tilde{S}_+} \phi_{H(v)} 
		\cdot  (f\circ g_t \circ P_{H(v)}^{-1} - f\circ g_t \circ P_{H(w)}^{-1}) |
		\,\dd \sigma_0 \\
		& \le C \int_{\partial \tilde M} | \mathbf{1}_{[v_+,\tilde{h}_1^M(v)_+]\setminus \tilde{S}_+} \phi_{H(v)} - 
		\mathbf{1}_{[w_+,\tilde{h}_1^M(w)_+]\setminus \tilde{S}_+} \phi_{H(w)}  |
		\,\dd \sigma_0
		\tag{$\ast$}  \\
		& + \int_{[v,\tilde{h}^M_1(v)]} |f\circ g_t - f\circ g_t \circ P_{H(w)}^{-1}\circ P_{H(v)}| \dd \mu_{H(v)}.
		\tag{$\ast\ast$} 
	\end{align*}
	The term ($\ast$) is independent of $t$ and tends to $0$ when $w$ tends to $v$. This is because the function $\mathbf{1}_{[w_+,\tilde{h}_1^M(w)_+]\setminus \tilde{S}_+} \phi_{H(w)}$ converges almost surely to $\mathbf{1}_{[v_+,\tilde{h}_1^M(v)_+]\setminus \tilde{S}_+} \phi_{H(v)}$. The function $P^{-1}_{H(w)}\circ P_{H(v)}$ in the term ($\ast\ast$) is defined at least on the set $P_{H(v)}^{-1}(P_{H(w)}(H(w))\setminus \tilde{S}_+)$, which has full measure. The map 
	$$(w,u)\mapsto P^{-1}_{H(w)}\circ { P}_{H(v)}(u)= {\bar P}^{-1}(w_-,u_+,\beta_{w_-}(0,\pi(w)))$$
	is continuous on its domain. 
	
	Given $\delta>0$, we can take $w$ in a neighborhood of $v$ such that the distance between $u$ and $P^{-1}_{H(w)}\circ P_{H(v)}(u)$ is less than $\delta $ for all $u\in [v,\tilde{h}^M_1(v)]$ where the function is defined. Since $u$ and $P^{-1}_{H(w)}\circ P_{H(v)}(u)$ are on the same weak stable manifold, the distance between them is non-increasing when we apply $g_t$. The distance between $g_t(u)$ and $g_t \circ P^{-1}_{H(w)}\circ P_{H(v)}(u)$ is therefore less than $\delta $ for all positive $t$. In this way, the term ($\ast\ast$) is bounded by the modulus of continuity $\omega_f(\delta )$ of $f$, which goes to $0$ if $\delta \to 0$ because $f$ is uniformly continuous. This proves the equicontinuity of the functions $\{M_1(f\circ g_t)\}_{t>0}$ at $v$.
	
\end{proof}

It is straightforward to see that nonwandering rank $1$ vectors are contained in the support of $\mu_{BM}$ from the definition. But every vector is nonwandering and the rank $1$ set is dense in $T^1M$, because $M$ is compact. So $\mu_{BM}$ is fully supported. It is also known that the horocyclic foliation is transitive \cite[Theorem 5.2]{Eberlein73a}. These are the remaining ingredients of Coudène's theorem. 

The rest of the proof goes verbatim to the one by Coudène. We explain it for the sake of completeness. We use the equicontinuity to apply the Arzelà-Ascoli theorem to $\{M_1(f\circ g_t)\}_{t>0}$. Hence, $\{M_1(f\circ g_t)\}_{t>0}$ is relatively compact in the space of continuous functions on $T^1M$ endowed with the uniform topology. Let $\bar f$ be an accumulation point of this family, so there is a sequence $t_k\to +\infty$ such that $M_1(f\circ g_{t_k})$ converges to $\bar f$. 

The commuting relation between $g_t $ and $h_s^M$ yields the formula
\begin{equation}\label{commuting}
	M_1(f\circ g_t)= M_{e^{\delta t}} (f) \circ g_t.
\end{equation}
Moreover, by the Von Neumann ergodic theorem, $M_R(f)$ converges in $L^2(\mu_{BM})$ to an $h_s^M$-invariant function $Pf$. Combining both facts with the $g_t$-invariance of $\mu_{BM}$, we have
$$
||\bar f - Pf \circ g_{t_k}||_{2} \le  ||  M_1(f\circ g_{t_k}) - \bar f ||_{2}+ ||M_{e^{\delta t_k}} (f) -Pf ||_2.
$$
This inequality implies that $\bar f $ is a $L^2$ limit of $h_s^M$-invariant functions. So $\bar f$ is continuous and $h_s^M$-invariant, and in fact it is constant since $h_s^M$ has a dense orbit. This constant is $\int f\dd \mu_{BM}$.

Since $\{M_1(f\circ g_t)\}_{t>0}$ has a unique accumulation point, the quantity $M_1(f\circ g_t)$ converges uniformly to this accumulation point when $t$ goes to infinity. Using again (\ref{commuting}) and the fact that the accumulation point is constant, we deduce that $M_R(f)$ converges uniformly to the same constant when $R$ goes to infinity. This implies the unique ergodicity of $h_s^M$. 

\end{proof}

Finally, as a corollary of Theorem \ref{thm_un_nostrips}, we can deduce the unique ergodicity of the horocyclic flow for other parametrizations, for example by the arc-length.

\begin{corollary}
	Let $M$ be an orientable nonpositively curved compact surface without flat strips. The Lebesgue horocyclic flow $h_s^L$ on $T^1M$ is uniquely ergodic.
\end{corollary}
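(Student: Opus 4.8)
The plan is to deduce this directly from Theorem \ref{thm_un_nostrips} together with the reparametrization results of Section \ref{Section3}. The crucial point is that, because $M$ has no flat strips, the Proposition stated just above Theorem \ref{thm_un_nostrips} guarantees that the hypotheses of Proposition \ref{horocyclic_flow} hold on \emph{all} of $T^1\tilde M$, so the Margulis horocyclic flow $h_s^M$ is a continuous flow defined on the whole of $T^1M$, not merely on a proper subset. Thus both $h_s^L$ and $h_s^M$ are continuous flows on the compact (hence separable) metric space $T^1M$ whose orbits are exactly the unstable horocycles. Neither flow has fixed points, since every horocycle is homeomorphic to $\reals$ and in particular is not a single point.

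First I would invoke Theorem \ref{BebStep} applied to the pair $(h_s^L,h_s^M)$ on $X=T^1M$: it yields a bijective correspondence $\Phi:\mes(h_s^L)\to\mes(h_s^M)$. Since $T^1M$ is compact, every locally finite Borel measure is finite, so $\Phi$ restricts to a bijection between the spaces of finite invariant measures, and after normalization it descends to a bijection between $\prob(h_s^L)$ and $\prob(h_s^M)$. Alternatively, one may use the explicit map $\Phi$ built in Section \ref{Section3}, which realizes this correspondence concretely by disintegrating invariant measures along weak stable sections (available here on the full space, since $\tilde S_+$ is $\sigma_0$-negligible and hence its complement is dense); on the compact manifold $T^1M$ it again induces a bijection between $\prob(h_s^L)$ and $\prob(h_s^M)$.

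Finally I would apply Theorem \ref{thm_un_nostrips}: it asserts that $\prob(h_s^M)$ is a singleton (a constant multiple of the Bowen-Margulis measure). Transporting this through the bijection, $\prob(h_s^L)$ is also a singleton, which is precisely the statement that $h_s^L$ is uniquely ergodic.

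There is no serious obstacle: the corollary is a formal consequence of the unique ergodicity of the Margulis flow together with the general principle that, on a compact space, unique ergodicity of a horocyclic flow depends only on the foliation by orbits and not on the chosen parametrization. The only points needing (routine) verification are that the Margulis parametrization is genuinely globally defined in the absence of flat strips — the content of the Proposition cited above — and that horocyclic flows are fixed-point free, so that Theorem \ref{BebStep} applies. In contrast with the proof of Theorem A, we do not need the finer analysis of Section \ref{Section3} dealing with measures on noncompact subsets, precisely because here we can work directly on the full compact unit tangent bundle.
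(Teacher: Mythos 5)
Your proposal is correct and is essentially the argument the paper intends: since the absence of flat strips makes the Margulis flow $h_s^M$ globally defined on the compact space $T^1M$, Theorem \ref{BebStep} (with compactness turning locally finite invariant measures into finite ones) gives a bijection between $\prob(h_s^L)$ and $\prob(h_s^M)$, and Theorem \ref{thm_un_nostrips} then forces $\prob(h_s^L)$ to be a singleton. Your added checks (fixed-point freeness, global definition of the Margulis parametrization via the preceding proposition) are exactly the routine verifications needed, so there is nothing further to fix.
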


\bibliographystyle{plain}
\bibliography{general_library}
\end{document}